\title{Twisted Spin Cobordism and positive scalar Curvature}
\newcommand*{\hooklongrightarrow}{\ensuremath{\lhook\joinrel\relbar\joinrel\rightarrow}}
\newtheorem{prop}{Proposition}[subsection]
\newtheorem{thm}[prop]{Theorem}
\newtheorem{cor}[prop]{Corollary}
\newtheorem{lem}[prop]{Lemma}
\newtheorem*{uthm}{Theorem}
\newtheorem*{ucor}{Corollary}
\newtheorem*{ulem}{Lemma}
\newtheorem*{uprop}{Proposition}
\theoremstyle{definition}
\newtheorem{defi}[prop]{Definition}
\newtheorem{rem}[prop]{Remark}
\newtheorem*{urem}{Remark}
\newtheorem{con}[prop]{Construction}
\newtheorem{obs}[prop]{Observation}
\newtheorem{conj}[prop]{Conjecture}
\newtheorem*{uconj}{Conjecture}
\DeclareMathOperator{\hofib}{hofib}
\DeclareMathOperator{\id}{id}
\newcommand{\Pin}{Pin}
\newcommand{\sd}{\odot} 
\newcommand{\MSpin}{MSpin}
\newcommand{\MSO}{MSO}
\newcommand{\MSpinK}{MSpin_{K}}
\newcommand{\KOK}{KO_{K}}
\newcommand{\koK}{ko_{K}}
\author{Fabian Hebestreit}
\address{Mathematisches Insitut, Universti\"at Bonn, Endenicher Alle 60, 53115 Bonn, Germany}
\email{f.hebestreit@math.uni-bonn.de}
\author{Michael Joachim}
\address{Fachbereich Mathematik und Informatik, WWU M\"unster, Einsteinstra\ss{}e 62, 48149 M\"unster, Germany}
\email{joachim@math.uni-muenster.de}
\newcommand{\commentout}[1]{}
\begin{document}

\begin{abstract}
We show how a suitably twisted $Spin$-cobordism spectrum connects to the question of existence of metrics of positive scalar curvature on closed, smooth manifolds by building on fundamental work of Gromov, Lawson, Rosenberg, Stolz and others. We then investigate this parametrised spectrum, compute its $mod~2$-cohomology and generalise the Anderson-Brown-Peterson splitting of the usual $Spin$-cobordism spectrum to the twisted case. Along the way we also describe the $mod~2$-cohomology of various twisted, connective covers of real $K$-theory. In an appendix we provide a comparison of our geometric models of twisted $Spin$ cobordism and twisted $K$-theory with others arising from abstract homotopy theory. 
\end{abstract}

\maketitle
\setcounter{tocdepth}{1}
\tableofcontents

\section{Introduction}
The problem of classifying manifolds admitting Riemannian metrics with special features, e.g. certain kinds of symmetry or curvature, is one of the core interests in differential geometry. The present work is motivated by this classification for the case of metrics with positive scalar curvature. Of the three classical types of curvature (sectional, Ricci-, and scalar curvature) the latter is the weakest. It is given by averaging processes from the other two and thus the most robust against manipulation of the metric and even the underlying manifold. The following theorem is arguably the most prominent example of this phenomenon and forms the cornerstone of current work on the existence of positive scalar curvature metrics.

\begin{uthm}[Gromov-Lawson 1980]
Let $(M,g)$ be a smooth $n$-dimensional Riemannian manifold of positive scalar curvature and $\varphi: S^k \times D^{n-k} \hooklongrightarrow M$ an embedding of a $k$-sphere with trivialised normal bundle. Then the manifold arising from $M$ by surgery along $\varphi$ again carries a positive scalar curvature metric as long as $n-k \geq 3$. Indeed, the metric $g$ can be extended to a metric with positive scalar curvature on the trace of the surgery.
\end{uthm}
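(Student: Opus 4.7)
The plan is to produce a positive scalar curvature (psc) metric on the trace of the surgery by an explicit local modification supported in a tubular neighborhood of $\varphi(S^k \times \{0\})$, leaving $g$ untouched elsewhere. First I would invoke the tubular neighborhood theorem and use the given trivialization of the normal bundle to identify a neighborhood with $S^k \times D^{n-k}(r_0)$ for some small $r_0 > 0$. Since psc is an open condition in $C^2$, a small perturbation allows me to assume that on a slightly smaller tube the metric is an exact product of a fixed psc metric on $S^k$ (say a suitably scaled round one) with the flat metric on $D^{n-k}$; let $\kappa_0 > 0$ be a lower bound for its scalar curvature.

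The trace $W$ of the surgery will be realised as a region in $M \times [0,\infty)$ carrying a suitably altered metric. Using coordinates $(x, r, \omega, t) \in S^k \times [0, r_0] \times S^{n-k-1} \times [0, \infty)$ on the extended tube, the key move is to prescribe a smooth embedded curve $\gamma$ in the quarter-plane $\{(r,t) : r \geq 0,\, t \geq 0\}$ which starts tangent to the line $r = r_0$ at $(r_0, 0)$, bends over in a bounded region, and meets the $t$-axis transversely at some height $(0, T)$. Sweeping $\gamma$ around using the $S^k$ and $S^{n-k-1}$ factors produces a smooth hypersurface $\Sigma$ in the tube region; glued to the cylinder on the complement of the tube inside $M \times [0, T]$, this yields a smooth manifold diffeomorphic to $W$ whose top boundary is the surgered manifold.

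The essential analytic step is to choose $\gamma$ so that the metric induced on $\Sigma$ carries positive scalar curvature. Away from the endpoints this induced metric is, up to ambient corrections, a warped product $g_{S^k} + ds^2 + r(s)^2\, g_{S^{n-k-1}}$ with $s$ the arclength along $\gamma$, and the standard warped-product and Gauss formulas combine to give
\[\mathrm{scal}(g_\Sigma) = \mathrm{scal}(g_{S^k}) + \frac{(n-k-1)(n-k-2)}{r(s)^2} + E(r, r', r'', \kappa_\gamma),\]
where $E$ collects bounded terms involving $r$ and its derivatives and the signed curvature $\kappa_\gamma$ of $\gamma$. The codimension hypothesis $n-k \geq 3$ is precisely what makes the prefactor $(n-k-1)(n-k-2)$ strictly positive, so that as $r(s) \to 0$ the normal-sphere contribution diverges and can be made to dominate $E$.

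The main obstacle, and the delicate heart of the construction, is the explicit design of $\gamma$: it must remain vertical long enough to match the product metric on $M \times [0, T']$ smoothly, then bend with curvature controlled in terms of $\kappa_0$ and $n-k$, and finally terminate at $r = 0$ along a rotationally smooth cap so that $\Sigma$ itself extends smoothly at its apex. I would assemble $\gamma$ piecewise from a vertical segment, a circular arc of sufficiently large radius, and a carefully normalised torpedo-shaped cap near the $t$-axis, and then verify by direct computation on each piece that the displayed scalar curvature bound remains positive. The transitional corners and the apex are the only genuinely technical points, and both can be arranged by taking the arc radius and the parameters of the cap small relative to $\kappa_0$ while keeping them nontrivial. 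Restricting the resulting psc metric on $W$ to its top boundary furnishes the psc metric on the surgered manifold claimed in the theorem.
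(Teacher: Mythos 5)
Your sketch correctly identifies the Gromov--Lawson bending-curve mechanism, the Gauss-equation computation, and the role of the codimension hypothesis, but the opening reduction is false, and it is precisely this reduction that the original proof is forced to avoid. You claim that, because positive scalar curvature is an open condition in $C^2$, a small perturbation lets you assume the metric on a slightly smaller tube is an exact product of a psc metric on $S^k$ with the flat metric on $D^{n-k}$. This fails on two counts. First, in Fermi coordinates around the core sphere the ambient metric is only $C^1$-close to the product $g\vert_{S^k}\oplus g_{\mathrm{flat}}$ as the tube shrinks: the $O(|y|^2)$ correction terms have bounded but non-vanishing second derivatives, so $C^2$-openness of psc gives no deformation to a product. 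Second, the candidate product metric need not be psc at all: its scalar curvature is $\mathrm{scal}(g\vert_{S^k})$, which is zero for $k\le 1$ and may be negative for $k\ge 2$, so there is no nearby psc product to deform to. (Take $M=S^1\times S^{n-1}$ with a psc product metric and $\varphi$ the obvious embedding of $S^1\times D^{n-1}$: the limiting product metric on the tube is flat in the circle direction.)

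This is not a cosmetic omission; it removes the actual content of the theorem. Gromov and Lawson carry out the bending computation for the genuine metric $g$. Via the Gauss equation the scalar curvature of the hypersurface $\Sigma$ swept out by $\gamma$ receives contributions from $\mathrm{scal}(g)>0$, from $\kappa_\gamma$, from the principal curvatures of the normal spheres, and from error terms controlled by the ambient curvature and the derivatives of the normal exponential map. The principal curvatures of the normal sphere are roughly $\sin\theta/r$, where $\theta$ is the angle of $\gamma$ from the vertical, so the dominant positive term is $(n-k-1)(n-k-2)\sin^2\theta/r^2$, not $(n-k-1)(n-k-2)/r^2$ as in your display. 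That $\sin\theta$-dependence is the crux: bending is needed to make $\sin\theta$ nonzero and activate the positive term, but bending costs a term of order $\kappa_\gamma\sin\theta\,(n-k-1)/r$ with the wrong sign. Handling this requires the bootstrap at the heart of the original argument (first bend to a fixed small angle $\theta_0$, then shrink $r$, then bend further), together with a careful torpedo cap; none of this is recoverable from your product-metric idealisation, and the ``bounded error $E$'' you invoke is not in fact bounded uniformly in the parameters you are taking to zero. As written, the proposal restates the strategy but does not give a proof.
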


Combined with the standard techniques of surgery and handlebody theory, this result shows that in order to prove the existence of a metric of positive scalar curvature on a given closed, smooth manifold $M$ one need only exhibit such a manifold cobordant to $M$, provided one can bound the dimensions of the surgeries occuring as one moves through the cobordism. These dimensions can be controlled by reference maps to some background space and we shall follow Kreck by using the normal $1$-type of $M$ as such. It is determined by the fundamental group $\pi$ of $M$ and by a lift $u$ of either the first or the first two Stiefel-Whitney classes of $M$ along the map $M \rightarrow B\pi$ classifying the universal cover of $M$; the second Stiefel-Whitney class is required only when the universal cover of $M$ admits a spin structure (which is of course exactly the case when there is a lift at all). We shall refer to such manifolds as \emph{almost spin} and call $M$ \emph{totally non-spin} otherwise.

We arrive at the well-known statement that for $n \geq 5$ a closed manifold $M$ admits a metric of positive scalar curvature if and only the class $[M] \in \Omega^{(\pi,u)}_n$ can be represented by a manifold with positive scalar curvature at all. When explicit generators of these cobordism groups are known, one can immediately derive classification results, the case of simply connected, non-spin manifolds being the simplest: In this case $\Omega^{(\pi,u)}_* = \Omega_*$ is the oriented cobordism ring, and all classes in dimension $3$ and above have representatives admitting positive scalar curvature metrics, so the same is true for any manifold in this case. However, for the majority of choices $(\pi,u)$, such representatives are not known and further reductions are required. In the case of both totally non-spin and spin manifolds there are rather satisfactory results. In the case of the former $\Omega^{(\pi,u)}_n = \Omega_n(B\pi;u)$, the right hand side denoting oriented cobordism with local coefficients, and it is a folklore result that $M$ admits a metric of positive scalar curvature if and only if $th([M])$ can be represented by a manifold with positive scalar curvature, where 
\[th \colon \Omega_n(B\pi;u) \longrightarrow H_n(B\pi, \mathbb Z^u)\]
is the Thom orientation.  Since the group homology on the right is far smaller than than the cobordism group on the left, this allows for many direct computations in cases of interest, though it is still an open question,  for example, whether or not for finite $\pi$ \emph{every} all totally non-spin manifolds of dimension at least $5$ admit metrics of positive scalar curvature. 
In the case of spin manifolds, one has $\Omega_n^{(\pi,u)} = \Omega_n^{Spin}(B\pi)$ and the analogous characterisation of manifolds admitting positive scalar curvature metrics is afforded by the Atiyah-Bott-Shapiro orientation
 \[\alpha:\Omega^{spin}_n(B\pi) \to ko_n(B\pi),\]
a result due to F\"uhring and Stolz; here $ko$ denotes connective, real K-theory. Already useful by itself (for instance $\alpha$ again usually has a very large kernel) it can also be seen as a way of attacking the Gromov-Lawson-Rosenberg conjecture, which attempts a full classification of spin manifolds that admit positive scalar metric in terms of operator, rather than topological, K-theory.

A similar result is currently lacking for almost spin manifolds and this is the principal motivation for the present paper. We shall identify the cobordism group $\Omega^{(\pi,u)}_*$ with a certain twisted spin cobordism group $\Omega^{Spin}_*(B\pi;u)$ in the case of an almost spin manifold. A twisted version of the Atiyah-Bott-Shapiro orientation with values in twisted $K$-theory then makes it possible to formulate the following

\begin{uconj}[Stolz 1995]
Let $M$ be a connected, closed, smooth, almost spin manifold $M$ of dimension $n\ge 5$ with normal $1$-type $(\pi,u)$ such that
$$0 = {\alpha}_K(M) \in ko_n(B\pi;u).$$
Then $M$ carries a metric of positive scalar curvature.
\end{uconj}

The characterisation of positive scalar curvature manifolds in terms of twisted K-theory analogous to those above would be an immediate consequence.

Presently, the only known ansatz for attacking this conjecture lies in an application of stable homotopy theory, essentially following the lines of Stolz' ideas in \cite{St2}. To approach a generalisation of his work recall that, just as homology theories are represented by spectra, twisted homology theories are represented by parametrised spectra; roughly speaking a parametrised spectrum $E$ is a collection of spectra varying over a parameter space $B$ and the associated twisted homology functor produces from a space $X$ and a twisting datum $\zeta \colon X \rightarrow B$ groups $E_*(X;\zeta)$, for example the twisted spin and K-theory groups above. 

In this article we will give explicit models for the twisted spin cobordism spectrum, which we name $\MSpinK$, and for the twisted real $K$-theory spectrum, which we name $\KOK$. Both are parametrised spectra over the classifying space $K$ for the projective orthogonal group of some infinite dimensional Hilbert space, which is well-known to record the kind of twists used in the forumulation of Stolz' conjecture. These models are built in an operator-theoretic fashion. As a result the twisted version of the Atiyah-Bott-Shapiro orientation $\MSpinK \rightarrow \KOK$ we construct has a straightforward operator theoretic interpretation.

When Stolz originally formulated this conjecture he used slightly different language, and an interpretation of the relevant cobordism groups in his context can be found in Stolz' preprint \cite{St3}. A twisted orientation map as above also already appeared in \cite{Jo0} using the model for $K$-theory, which later was published in \cite{Jo}. We would like to emphasise that using recent machinery in stable homotopy also provides methods to realise the desired homotopy theoretic objects, however, the relation to the geometry in that context is less immediate.
 
To carry over Stolz' line of argument from \cite{St2} to the twisted setting now requires a solid understanding of the parametrised spin bordism spectrum. As a first step in the analysis of the parametrised spin cobordism spectrum we use variants of the $KO$-valued Pontryagin classes of \cite{AnBrPe} to produce maps of parametrised spectra 
$\overline{\theta}_J: \MSpinK \rightarrow \KOK$, that for every partition $J$ admit lifts $\overline{\theta}_J$ to certain connective covers $\koK\langle n_J \rangle$ of $\KOK$. Using these we obtain the following generalisation of the Anderson-Brown-Peterson splitting. 
 
\begin{uthm}[\ref{genspli}]
For any choice of lifts $\overline{\theta}_J: \MSpinK \rightarrow \koK\langle n_J \rangle$ there exist maps $x_i: \MSpinK \longrightarrow K \times \Sigma^{n_i} H\mathbb Z/2$, such that the combined map
$$\MSpinK \longrightarrow \left[\prod_{J} \koK\langle n_J \rangle\right] \times \left[\prod_i K \times \Sigma^{n_i} H\mathbb Z/2\right]$$
is a $2$-local equivalence and in particular induces an isomorphism of twisted homology theories after localisation at $2$.
\end{uthm}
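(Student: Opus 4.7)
The plan is to adapt the classical Anderson-Brown-Peterson (ABP) argument to the parametrised setting. Classically, one computes $H^*(MSpin;\mathbb{Z}/2)$ as a module over the mod $2$ Steenrod algebra $\mathcal{A}$, exhibits a direct sum decomposition into shifts of $\mathcal{A}/\!/\mathcal{A}(1)$ and free $\mathcal{A}$-modules, matches the first type of summand with the cohomological image of the $KO$-valued Pontryagin classes, and matches the second type with suitably chosen $\mathbb{Z}/2$-valued characteristic classes. The combined map is then a mod $2$ cohomology isomorphism, which standard arguments promote to a $2$-local equivalence.

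First I would invoke the computation of $H^*(M_2O;\mathbb{Z}/2)$ as an $\mathcal{A}$-module (announced in the abstract and carried out in earlier sections) to establish the analogous splitting in the twisted setting. For each summand of the form $\mathcal{A}/\!/\mathcal{A}(1)$ I would identify it with the image in cohomology of one of the given lifts $p_J: M_2O \to k_2o\langle n_J\rangle$, using that $H^*(k_2o\langle n_J\rangle;\mathbb{Z}/2)$ has the same algebraic shape, appropriately shifted. For each remaining free $\mathcal{A}$-summand generated by a class in degree $i$, I would construct the corresponding $x_i: M_2O \to K \times sh^i H\mathbb{Z}/2$ via the representability of mod $2$ cohomology by the parametrised Eilenberg--MacLane spectrum.

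With these maps arranged so that the combined map is a mod $2$ cohomology isomorphism, the upgrade to an isomorphism of parametrised homology theories after localisation at $2$ follows from the parametrised analogue of the classical fact that a map of connective $2$-local spectra of finite type inducing an isomorphism on mod $2$ (co)homology is already a $2$-local equivalence. In the parametrised setting this should be accessible fibrewise, reducing to the classical statement over (the components of) the base, possibly after verifying a mild finiteness hypothesis inherited from $M_2O$ and its target.

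The main obstacle is the matching step: verifying that the Pontryagin-type classes $p_J$ detect precisely the $\mathcal{A}/\!/\mathcal{A}(1)$-summands of the expected degrees, so that the $x_i$ are only needed to fill the complementary free part. Classically this rests on a careful interplay between the Atiyah-Bott-Shapiro orientation and the known structure of $H^*(BSpin;\mathbb{Z}/2)$; the twisted version will require combining the twisted ABS orientation $\hat\alpha$ from section~4 with a careful tracking of the $\mathcal{A}$-module structure of $H^*(M_2O;\mathbb{Z}/2)$ revealed by the cohomology computation in the preceding sections.
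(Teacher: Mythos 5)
Your approach is genuinely different from the paper's, and as presented it has a circularity problem. The paper's proof is almost a one-liner: it invokes the parametrised Whitehead-type theorem (Theorem~\ref{isocoeff}, proved earlier via Mayer–Vietoris/skeletal induction) to reduce the claim to a statement about coefficients, i.e.\ to checking that over each point $k \in K$ the combined map induces an isomorphism on the homotopy groups of the derived fibre spectra. At each such point the fibre spectra of $M_2O$, $k_2o\langle n_J\rangle$ and $K \times sh^{n_i}H\mathbb{Z}/2$ are (non-canonically) equivalent to $MSpin$, $ko\langle n_J\rangle$ and $sh^{n_i}H\mathbb{Z}/2$, and one checks that the combined map restricts to a classical ABP splitting map, whence the classical theorem applies. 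No computation of $H^*(M_2O;\mathbb{Z}/2)$ is needed.

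Your plan, by contrast, wants to first establish the $\underline{\mathcal{A}}$-module decomposition of $H^*(M_2O;\mathbb{Z}/2)$ and then upgrade a cohomology isomorphism to a $2$-local equivalence of parametrised spectra. The difficulty is that in the paper the cohomology computation (Section~6, Corollary~\ref{cohom}) is a \emph{consequence} of the splitting, not an input to it; you would have to compute $H^*(M_2O)$ as a module over the \emph{twisted} Steenrod algebra $\underline{\mathcal{A}}$ from scratch, and the structure there is genuinely subtler than a sum of shifts of $\mathcal{A}/\!/\mathcal{A}(1)$ and free $\mathcal{A}$-modules: the decomposition lives over $\underline{\mathcal{A}}$ twisted by the automorphism $\varphi$, and what appears are $\underline{\mathcal{A}}/\varphi(Sq^{1,2})$, $\underline{\mathcal{A}}/\varphi(Sq^3)$, and free $\underline{\mathcal{A}}$-modules. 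Moreover, your final step appeals to a parametrised analogue of ``a mod $2$ cohomology isomorphism of connective finite-type spectra is a $2$-local equivalence'', which is exactly the kind of Whitehead-style comparison that the paper's Theorem~\ref{isocoeff} is designed to replace; without proving such a statement in the parametrised framework, the upgrade is not justified. So while your approach is in principle plausible, it both assumes results that the paper derives \emph{from} the splitting, and relies on an unstated parametrised Whitehead theorem, whereas the fibrewise-reduction argument sidesteps both issues.
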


We then use this splitting to describe the mod $2$ cohomology of $\MSpinK$ and $\koK$ (the connective version of $\KOK$). Being parametrised spectra over $K$, their mod $2$ cohomology is a module over the extended Steenrod algebra $\underline{\mathcal A} = H^*(K) \sd \mathcal A$, the semidirect product of the Steenrod algebra $\mathcal A$ acting on $H^*(K)$. 
The algebra $\underline{\mathcal A}$ may be regarded as the stable cohomology operations of ordinary mod $2$ cohomology evaluated on spectra parametrised by $K$. The main structural result here is that they are induced along a certain embedding $\varphi: \mathcal A(1) \hooklongrightarrow \underline{\mathcal A}$. Here $\mathcal A(1)$ denotes the subalgebra of $\mathcal A$ generated by $Sq^1$ and $Sq^2$, but the embedding is \emph{not} the evident one into the left factor.

\begin{uthm}[\ref{cohok}]
Evaluation at the unique non-trivial class in lowest degree gives isomorphisms 
\begin{align*}
\underline{\mathcal A}_\varphi \otimes_{\mathcal A(1)} \mathbb Z/2        & \longrightarrow H^*(\koK,\mathbb Z/2) \\
sh^{2} \underline{\mathcal A}_\varphi \otimes_{\mathcal A(1)} \mathcal A(1)/\langle Sq^3\rangle & \longrightarrow H^*(\koK\langle 2 \rangle,\mathbb Z/2)
\end{align*}
where $\underline{\mathcal A}_\varphi$ denotes the twisted Steenrod algebra viewed as a right module over $\mathcal A(1)$ via $\varphi$, and $sh^{2}$ denotes a degree shift by $2$.
\end{uthm}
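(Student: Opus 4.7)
The plan is to imitate Stong's classical computation, which presents $H^*(ko;\mathbb{Z}/2)$ as $\mathcal{A}\otimes_{\mathcal{A}(1)}\mathbb{Z}/2$ via the Atiyah--Bott--Shapiro orientation, lifting each step to the parametrised setting afforded by section~3 and the twisted orientation $\hat{\alpha}$ built in section~4.

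First I would verify that the asserted map is well-defined. By connectivity of $k_2o$ there is a unique non-zero class $u$ in the lowest degree, and the needed input is that $\varphi(Sq^1)$ and $\varphi(Sq^2)$ (together with $\varphi(Sq^3)$ in the $k_2o\langle 2\rangle$-case) annihilate $u$. These vanishings should follow from the parametrised Postnikov analysis of $k_2o$: the operations $Sq^1$ and $Sq^2$ detect the bottom $k$-invariants, which vanish because the lowest non-trivial homotopy group is $\mathbb{Z}$ concentrated in degree $0$, while on the $2$-connective cover the same argument forces the killing of $Sq^3$. Frobenius reciprocity then extends the resulting $\mathcal{A}(1)$-module maps uniquely to $\underline{\mathcal{A}}$-module homomorphisms
\begin{equation*}
\underline{\mathcal{A}}_\varphi \otimes_{\mathcal{A}(1)} \mathbb{Z}/2 \longrightarrow H^*(k_2o;\mathbb{Z}/2),\quad sh^{-2}\underline{\mathcal{A}}_\varphi \otimes_{\mathcal{A}(1)} \mathcal{A}(1)/Sq^3 \longrightarrow H^*(k_2o\langle 2\rangle;\mathbb{Z}/2).
\end{equation*}

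To promote these to isomorphisms I would run the parametrised Atiyah--Hirzebruch--Serre spectral sequence for $k_2o$ over its twisting base $B$,
\begin{equation*}
E_2^{p,q}=H^p\bigl(B;H^q(ko;\mathbb{Z}/2)\bigr) \Longrightarrow H^{p+q}(k_2o;\mathbb{Z}/2),
\end{equation*}
whose fibre input is Stong's free $\mathcal{A}(1)$-module $\mathcal{A}\otimes_{\mathcal{A}(1)}\mathbb{Z}/2$. A parallel description of $\underline{\mathcal{A}}_\varphi$ as a free $\mathcal{A}(1)$-module of the same Poincar\'e series, which should emerge from the structural analysis of $\underline{\mathcal{A}}$ earlier in section~6, matches the abutment algebraically; absence of extension problems (since all graded pieces are free over $\mathcal{A}(1)$) then reduces the isomorphism claim, degree by degree, to the classical Stong computation on the fibre. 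The same scheme applies verbatim to $k_2o\langle 2\rangle$, the shift by $-2$ recording the connectivity gap and the quotient by $Sq^3$ encoding the extra $k$-invariant killed by taking the cover.

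The main obstacle is the precise identification of the embedding $\varphi$. The naive inclusion $\mathcal{A}(1)\hookrightarrow\underline{\mathcal{A}}$ does \emph{not} work, so the correct $\varphi$ must incorporate corrections by cohomology classes of the base $B$ in such a way that its action on $u$ reproduces what the parametrised homotopy of $k_2o$ dictates. Pinning down these correction terms -- essentially twisted analogues of the Wu formulas -- in low degrees, and verifying that the resulting assignment actually extends to an algebra embedding rather than merely a linear one, is the technical heart of the argument; once $\varphi$ is in place, the spectral sequence comparison above is essentially bookkeeping.
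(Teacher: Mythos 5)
Your high-level plan — use a Postnikov section for surjectivity, compare Poincar\'e series via the Serre spectral sequence over the twisting base $K$, and reduce to Stong's computation of $H^*(ko)$ on the fibre — does match the architecture of the paper's argument. But there are two genuine gaps at precisely the points you dismiss as routine.

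First, the claim that $\varphi(Sq^1)\kappa=0$ and $\varphi(Sq^2)\kappa=0$ ``follow from the parametrised Postnikov analysis of $k_2o$'' is where your argument actually breaks. In the untwisted case $Sq^i(u)=0$ for $i=1,2$ simply because $H^1(ko)=H^2(ko)=0$; no $k$-invariant argument is needed or even applicable. In the twisted case, however, $H^1(k_2o)$ and $H^2(k_2o)$ are both non-zero (they contain the classes $\iota_1\kappa$, $\iota_1^2\kappa$, $\iota_2\kappa$ from the base $K$), so the vanishing of $\varphi(Sq^{1,2})\kappa$ is a genuinely non-trivial cancellation between the untwisted $Sq^i$-terms and the $\iota$-correction terms in $\varphi$. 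The paper explicitly remarks that there is no known way to track Steenrod operations through the Serre spectral sequence and instead pushes the computation forward along the split-injective map $\hat{\alpha}^*\colon H^*(k_2o)\to H^*(M_2O)\cong H^*(MO)$, where the image of $\kappa$ is the Thom class $u$ and the Wu-type relations $Sq^1(u)=w_1u$, $Sq^2(u)=w_2u$ make the cancellation explicit. Your proposal provides no mechanism to see this cancellation.

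Second, the assertion that ``the same scheme applies verbatim to $k_2o\langle 2\rangle$'' is exactly what fails. For the cover case the needed vanishing $\varphi(Sq^3)\kappa_{8n+2}=0$ cannot be obtained the same way, since the image of $\kappa_{8n+2}$ under the inclusion into $H^*(M_2O)$ is not known explicitly (it depends on the choice of lift $\overline{\theta}_3$). The paper works around this by decomposing $\overline{\theta}_3$ through the multiplication $\mu\colon ko\langle 10\rangle_0\wedge k_2o\to k_2o\langle 10\rangle$, using $\mu^*\kappa_{10}=\lambda_{10}\times\kappa$, and then carrying out a substantial Cartan-formula computation to check $\varphi(Sq^3)(\lambda_{10}\times\kappa)=0$. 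This is the technical heart of Theorem \ref{cohok}, not bookkeeping. (Relatedly, you treat the identification of $\varphi$ as the hard part, but in the paper $\varphi$ is already constructed in Lemmas \ref{twiemb}--\ref{twimod} before this theorem; the difficulty lies in verifying its action on $\kappa$ and $\kappa_{8n+2}$, not in defining it.) You also do not address the prerequisite that the coefficient system in the Serre spectral sequence for $k_2o$ is constant, which the paper establishes in Lemma \ref{bashom} by a non-obvious argument about the $\pi_1(BP\Gl)$-action on the fibre.
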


Our convention here is that for $E$ a space, or (parametrised) spectrum the $n$-connective cover $E\langle n \rangle$ has vanishing (fibrewise) homotopy groups below, but not necessarily in, degree $n$.

\begin{ucor}[\ref{cohom}]
The $\underline{\mathcal A}$-module $H^*(\MSpinK, \mathbb Z/2)$ is an extended $\mathcal A(1)$-module.
\end{ucor}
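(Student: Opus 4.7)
The plan is to combine the splitting of Theorem \ref{genspli} with the cohomology computations of Theorem \ref{cohok} and its analogues for the remaining summands. Since mod $2$ cohomology is already $2$-local, the splitting yields an isomorphism of $\underline{\mathcal A}$-modules
\[
H^*(M_2O, \mathbb Z/2) \;\cong\; \bigoplus_J H^*(k_2o\langle n_J\rangle, \mathbb Z/2) \;\oplus\; \bigoplus_i H^*(K \times sh^i H\mathbb Z/2, \mathbb Z/2),
\]
where the product in the target of \ref{genspli} passes to a direct sum in cohomology because all the factors are bounded below with finite-type cohomology, so only finitely many contribute in each degree and no $\lim^1$-terms interfere.

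By Theorem \ref{cohok}, each summand $H^*(k_2o\langle n_J\rangle, \mathbb Z/2)$ is of the form $\underline{\mathcal A}_\varphi \otimes_{\mathcal A(1)} M_J$ for some $\mathcal A(1)$-module $M_J$ (explicitly for $n_J \in \{0,2\}$; the remaining connective covers occurring in the splitting admit parallel descriptions handled in Section $6$). The cohomology of a shifted $H\mathbb Z/2$ is a shift of $\underline{\mathcal A}$ itself, hence equals $\underline{\mathcal A}_\varphi \otimes_{\mathcal A(1)} sh^? \mathcal A(1)$ and is therefore trivially extended, and the factor $K$ is treated analogously. Since extension of scalars $\underline{\mathcal A}_\varphi \otimes_{\mathcal A(1)} (-)$ commutes with arbitrary direct sums, assembling these identifications presents
\[
H^*(M_2O, \mathbb Z/2) \;\cong\; \underline{\mathcal A}_\varphi \otimes_{\mathcal A(1)} N
\]
for a suitable $\mathcal A(1)$-module $N$, which is the desired conclusion.

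The main point that needs care -- rather than a genuine obstacle -- is to verify that the abstract equivalence of \ref{genspli}, a priori only an isomorphism of parametrised homology theories after $2$-localisation, produces a bona fide isomorphism of $\underline{\mathcal A}$-modules on mod $2$ cohomology. This follows from the construction: the splitting is realised by an actual map of parametrised spectra assembled from the lifts $p_J$ and the classes $x_i$, and any map of parametrised spectra automatically induces an $\underline{\mathcal A}$-linear map on mod $2$ cohomology. Once this observation is in place, the corollary is a direct bookkeeping consequence of \ref{genspli} and \ref{cohok}.
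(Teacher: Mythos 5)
Your proposal is correct and takes the same route as the paper: the paper likewise reads Corollary \ref{cohom} directly off the generalised Anderson--Brown--Peterson splitting \ref{genspli} together with the identifications of $H^*(k_2o\langle 8n\rangle)$ and $H^*(k_2o\langle 8n+2\rangle)$ proved in Section~\ref{seccoh}, and then observes that since $\underline{\mathcal A}_\varphi \otimes_{\mathcal A(1)}(-)$ commutes with direct sums the whole thing is extended along $\varphi$. One small imprecision worth fixing: you hedge that Theorem \ref{cohok} is "explicit for $n_J \in \{0,2\}$" with the remaining covers "handled in Section 6," but the Bott-periodicity reductions built into the proofs of the two theorems already give the result for all $n_J \equiv 0,2 \pmod 8$, which are exactly the $n_J$ that occur in the splitting (recall $n_J = 4n(J)$ or $4n(J)-2$); the cases $\equiv 1,4 \pmod 8$ that appear later in Section 6 only as a \emph{conjecture} are never needed here.
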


These two theorems provide powerful tools, both conceptually and computationally. As an illustration we would like to mention that they can be used to generalise to the twisted context the theorem of Hopkins and Hovey \cite{HoHoHo}, that the Atiyah-Bott-Shapiro orientation $\alpha\colon MSpin \rightarrow KO$ induces a natural isomorphism \[\Omega^{Spin}_*(-) \otimes_{\Omega_*^{Spin}} KO_* \to KO_*(-)\] of homology theories despite $KO_*$ not being a flat $\Omega^{Spin}_*$-module, see the forthcoming \cite{BJKS}. Concerning our main objective, however, our investigation also shows - much to the authors' surprise - that many convenient observations of Stolz that are used in \cite{St2} do not carry over to the twisted situation; one instance of this is given in Theorem \ref{nogo}. We do, however, believe that these unexpected difficulties are surmountable and that Stolz' approach indeed can be carried over, one way or the other, to the case of almost spin manifolds. We hope to come back to it in the near future.

\subsection*{Organisation of the paper}
In Section 2 we review the (mostly well-known) method of Gromov and Lawson to address the question of existence of a positive scalar curvature metric on a manifold in the required generality. We also carry out the translation into parametrised homotopy theory in this section. Details on parametrised homotopy theory then are given at the start of Section 3, before we  define the main objects of study, namely the twisted spin cobordism spectrum $\MSpinK$ and the twisted real $K$-theory spectrum $\KOK$. In Section 4 we present the parametrised Atiyah-Bott-Shapiro orientation $MSpin_K \to \koK$ and discuss fundamental classes in twisted spin cobordism. Section 5 then contains the generalised Anderson-Brown-Peterson splitting, while Section 6 contains our results on the $\mathbb{Z}/2$-cohomology of $\MSpinK$. The final section discusses the application of these results to Stolz' conjecture. 

We accompanied the paper with three appendices. The first one contains a simple, lengthy computation from Section 6, which we seperated to improve readibility of the main text. In the second we give a proof of the cobordism invariance theorem for positive scalar curvature metrics, which we (although the result is well-known) could not find in the literature, and the last one provides a comparison of the geometric and homotopical methods of obtaining  twisted $Spin$ cobordism and twisted $K$-theory.\\

\subsection*{Acknowledgements.}
We would first and foremost like to thank Stephan Stolz for freely sharing his insights, great and small. Furthermore, we want to thank Irakli Patchkoria, Ulrich Pennig and Steffen Sagave for several helpful discussions about the contents of the last appendix. The first author would also like to thank Daniel Kasprowski for many fruitful discussions during our time as graduate students. 

The first author was supported by a postdoctoral scholarship of the German Academic Exchange Service (DAAD) during his year at the University of Notre Dame, where parts of this project were finished.

\section{The relation of positive scalar curvature to cobordism}

\subsection{From surgery to cobordism.}\label{from-surgery-to-cobordism}

Given the surgery theorem of Gromov and Lawson from \cite{GrLa} the next step is to investigate the class of manifolds from which a given manifold $M$ arises by surgeries in codimension $\geq 3$. There is a well-known strategy to do so using cobordism theory. The appropriate type of cobordism theory depends on the manifold $M$, and classically one uses the concept of $B$-bordism. This notion is built on a fibration $B \to BO$ and uses lifts of explicit Gau\ss{} maps for embeddings of $M$ into euclidean space to record the extra structure on $M$, see for example \cite[Chapter II]{Stong}.

We will use a slightly different but equivalent notion for the following two reasons. On the one hand it will turn out rather inconvenient (for example in \ref{choice-of-bordism-gropus}) to require the reference map $B \to BO$ to be a fibration. On the other, for the cycle description of twisted bordism theories in \ref{fund} requiring the reference map $M \rightarrow BO$ (which is a required datum for a cycle) to be the Gau\ss{} map of an embedding does not seem appropriate. There is an additional diagram such a map is required to make commutative and it seems to come with undue difficulties to try and arrange this by adjusting an embedding instead of merely adjusting the reference map directly. We will work directly with the object the map from $B \rightarrow BO$ is supposed to classify, a \emph{stable vector bundle}, and describe the relevant structure in terms of bundle data.

To this end, recall that a stable vector bundle $\xi$ of dimension $k \in \mathbb Z$ over a space $X$ is given by an exhaustive filtration (by cofibrations) $X_{-k} \subseteq X_{-k+1} \subseteq ...$ together with a sequence of vector bundles $\xi_i\to X_i$ of dimension $i+k$ and isomorphisms $\xi_i \times {\mathbb R} \cong \xi_{i+1}|_{X_{i}}$ for every $i$. A map $\xi \to \xi'$ between stable vector bundles corresponds to a compatible family of fibrewise isomorphisms $\xi_i \to \xi'_i$. A concordance between two stable vector bundles on the same space $X$ is another stable vector bundle on $X \times I$, whose restrictions to the boundaries are the original stable vector bundles. There are evident trivial stable bundles of arbitrary dimension (with constant filtration of the base) and any actual vector bundle may be regarded as a stable one of the same dimension (with filtration consisting of $\emptyset$ and $X$ only). Furthermore, one can form the (external) direct sum of stable bundles by glueing the various bundles over $X_i \times Y_{j-i}$ for varying $i$ together to give the $j$th step of a stable bundle over $X \times Y$. 

For a cell complex $X$ a concordance class of stable vector bundles corresponds to a homotopy class of maps $X \rightarrow \mathbb Z \times BO$, with the classifying map of a $k$-dimensional bundle mapping into $\{k\} \times BO \subseteq \mathbb Z \times BO$. Here we regard $\{k\} \times BO$ as the colimit over cofibrations $BO(k+i)\to BO(k+i+1)$, so that it can be equipped with a universal stable vector bundle of dimension $k$ whose $i$th step is the universal bundle $\gamma_{k+i}$ over $BO(k+i)$. Given a stable vector bundle of dimension $k$ over $X$ one can find classifying maps $X_i\to BO(k+i)$ for the $\xi_i$ so as to achieve a map $X \to BO$. Conversely, given a map $f:X \to BO$ we define $X_i$ to be the biggest subcomplex of $X$ which is contained in $f^{-1}(BO(k+i))$. One then obtains a stable vector bundle $\xi$ over $X$ by declaring $\xi_i = f|_{X_i}^*(\gamma_{i})$.

A \emph{stable normal bundle} of a closed smooth $n$-manifold $M$ we then define to be a stable vector bundle $\nu$ of dimension $-n$ over $M$, together with a concordance from $\nu \oplus TM$ to the trivial stable vector bundle over $M$. Now let there be given a reference space $B$ with a stable vector bundle $\xi$ of dimension $0$. For us a $\xi$-structure on an $n$-manifold $M$ is by definition a choice of stable normal bundle $\nu$ of $M$, together with a map of stable vector bundles $\nu \oplus \mathbb R^n \rightarrow \xi$, up to concordance. \\

In the next two paragraphs we will explain how this notion is equivalent to the notion of a $B$-structure on $M$. As we will not make use of this comparison in the remainder of the paper the credulous reader may safely skip ahead to Theorem \ref{bordi}.

Consider then a fibration $\theta \colon B \rightarrow BO$. An embedding $e$ of $M$ into euclidean space canonically produces a stable normal bundle in the sense just described and together with a lift $M \rightarrow B$ of its Gau\ss{} map produces a map of stable bundles $\nu_e \rightarrow \theta^*(\gamma)$, and thus a $\theta^*(\gamma)$-structure. Conversely, given a stable normal bundle $\nu$ together with a map $\nu \rightarrow \theta^*(\gamma)$ pick an embedding $e$ of $M$ into high dimensional euclidean space. By the (stable) uniqueness of complementary bundles there is an essentially unique concordance between $\nu$ and $\nu_e$ compatible with the chosen one from $\nu \oplus TM$ to the trivial bundle and one obtains a $B$-structure on $M$ by taking the map $M \rightarrow B$ underlying $\nu \rightarrow \theta^*\gamma$ and rectifying it to be a lift of the Gau\ss{} map of $e$ via homotopy lifting along the fibration $\theta$.

For an arbitrary stable vector bundle $\xi$ over $B$, pick a classifying map $c_{\xi}: \xi \to \gamma$ and replace its base component $\overline{c_\xi} \colon B \rightarrow BO$ by a fibration $\theta \colon B' \rightarrow BO$. Given then a manifold $M$ and a bundle map $l\colon \nu \rightarrow \xi$, pick an embedding $e$ of $M$ into euclidean space and a concordance between $\nu_e$ and $\nu$ as before and a classifying map $c_\nu: \nu \rightarrow \gamma$. There is an essentially unique homotopy of vector bundle maps $\nu \to \gamma$ between $c_\xi \circ l$ and $c_{\nu}$, since the space of bundles maps $\nu \to \gamma$ is contractible. On the level of base spaces the $\xi$-structure yields a map $\overline{l}: M\to B$, and the choices just made give maps $\overline{c}_\nu: M \to BO$ as well as a homotopy between $\overline{c}_\xi \circ \overline{c}$ and $\overline{c}_\nu$ and the concordance between $\nu$ and $\nu_e$ gives an essentially unique homotopy between $\overline c_\nu$ and the Gau\ss{} map of $e$ (since the space of bundle map from the concordance to $\gamma$ is contractible). These pieces of data assemble into a lift of $M \rightarrow B'$ of the Gau\ss{} maps. Conversely, given a manifold $M$, an embedding $e$ and a lift $l': M \rightarrow B'$ of its Gau\ss{} map we can  lift $l'$ essentially uniquely to a map $l \colon M \rightarrow B$ together with a homotopy from $l'$ to $i \circ l$, where $i \colon B \rightarrow B'$ is the canonical homotopy equivalence. But then we can use $l^*(\xi)$ as the normal bundle of $M$: It comes with a canonical map to $\xi$ and the concordance from $l^*(\xi) \oplus TM$ to the trivial bundle arises via the homotopy of $\overline{c_\xi} \circ l = \theta \circ i \circ l$ to $\theta \circ l'$, which is the Gau\ss{} map of $\nu_e$ by assumption. 

With either notion of a $\xi$-manifold one then has:

\begin{thm}\label{bordi}
Let $M$ be a smooth, closed $\xi$-manifold of dimension $\geq 5$, whose underlying structure map $M \rightarrow B$ is a $2$-equivalence. If the $\xi$-cobordism class of $M$ contains a manifold admitting a metric of positive scalar curvature, then also $M$ admits such a metric.
\end{thm}
The result is well-known, and the content of the theorem is explicitly stated in \cite[Bordism Theorem 3.3]{RoSt} as well as \cite[Theorem 1]{Kr}. In \cite{RoSt} the authors mention in the text that a proof can be obtained by an adaptation of Rosenberg's argument in \cite[Proof of Theorem 2.2]{Ro}, which covers the spin case. The proof presented in \cite{Kr}, however, contains an incorrect argument. Since to the best of our knowledge no complete proof appears in the literature we provide one in the second appendix following the line of argument of \cite{RoSt}.

For a given manifold $M$ one may apply the above theorem directly to some stable normal bundle $\nu$ of $M$. However, in order to use computations of corresponding $\xi$-cobordism groups efficiently one chooses the reference bundle in a way that picks up as little information from the specific manifold $M$ at hand as possible. Canonical choices arise from second Moore-Postnikov factorisations of a classifying map 
$$M \rightarrow B \rightarrow BO$$
of $\nu$; by definition this means that the map $M \rightarrow B$ is a $2$-equivalence and $B \rightarrow BO$ is a $2$-coequivalence (i.e.\ injective on $\pi_{2}$ and an isomorphism on $\pi_{n}$ for $n>2$). These two properties in particular determine the fibre homotopy type of $B \rightarrow BO$. This fibre homotopy type is called the  \emph{normal $1$-type} of the manifold $M$ in \cite{Kr}. We now can choose a stable vector bundle $\xi$ on $B$, whose concordance class is represented by the map $B \to BO$. In view of Kreck's definition we will call the pair $(B, \xi)$ a representative of the normal $1$-type of $M$. Essentially by construction the manifold $M$ can be equipped with a $\xi$-structure whose underlying map $M \to B$ is the one in the above Moore-Postnikov decomposition, and the manifold $M$ with this $\xi$-structure meets the assumptions of Theorem \ref{bordi}. \\

One typically distinguishes two separate cases, namely the case where the universal cover of $M$ admits a $Spin$-structure and the case where the universal cover does not. Note that the conclusion of Theorem \ref{bordi} is independent of the choice of a specific $\xi$-structure on the manifold: what is relevant is simply the existence of a corresponding $\xi$-structure.
From that point of view we will also distinguish between manifolds which admit spin structures and manifolds which are equipped with a specific spin structure: we will call a manifold spinnable in first case, and spin in the latter. If the universal cover of $M$ is spinnable we call $M$ almost spinnable, and following standard terminology we call a manifold totally non-spin if it is not almost spinnable.
The normal $1$-types for the two cases, namely totally non-spin manifolds on the one hand and almost spinnable manifolds on the other, were already considered by Kreck in \cite[Proposition 2]{Kr}, so we will be brief and just mention the results.

Recall first that the first two Stiefel-Whitney classes of the stable normal bundle of $M$ are given by $(w_1(M), w_2(M) + w_1(M)^2)$. Given a connected manifold $M$ with fundamental group $\pi$ there is a unique class $u_1 \in H^1(B\pi, \mathbb Z/2)$ that pulls back to $w_1(M) \in H^1(M,\mathbb Z/2)$ 
under the map classifying of the universal cover of $M$. Representing $w_1\in H^1(BO,\mathbb Z/2)$ and $u_1$ as maps into an Eilenberg-MacLane space we define $B_1$ as the homotopy pullback in following diagram

$$\begin{xy}\xymatrix@-1pc{B_1\ar[d] \ar[r]^c      & BO \ar[d]^{w_1}\\
                      B\pi \ar[r]_-{u_1} & K(\mathbb Z/2,1)}
\end{xy}$$

\begin{prop}
In case $M$ is totally non-spin a pair consisting of $B_1$ and any stable vector bundle whose concordance class is represented by the map $c \colon B_1 \to BO$ is a representative of the normal $1$-type of $M$. 
\end{prop}
In the case when $M$ is almost spinnable the map $M\to B_1$ is not onto on $\pi_2$. However, in that case there is a unique class $u_2 \in H^2(B\pi,\mathbb Z/2)$ pulling back to $w_2(M)+w_1(M)^2 \in H^2(M, \mathbb Z/2)$ under the canonical map $M \rightarrow B\pi$. Representing the cohomology classes by actual maps we obtain a corresponding homotopy pullback diagram
$$\begin{xy}\xymatrix@-1pc{B_2 \ar[d] \ar[rr]^c    &            & BO \ar[d]^{(w_1,w_2)}\\
                      B\pi \ar[rr]_-{(u_1, u_2)} & & K(\mathbb Z/2,1) \times K(\mathbb Z/2,2).}
\end{xy}$$
\begin{prop}\label{almost-spinnable-case}
In case $M$ is almost spinnable a pair consisting of $B_2$ and any stable vector bundle whose concordance class is represented by the map $c \colon B_2 \to BO$ is a representative of the normal $1$-type of $M$.
\end{prop}

In either case, after picking once and for all representatives for $w_1$ and $w_2$, the representatives of the normal $1$-type of $M$ essentially just depend on the fundamental group $\pi$ and a map $u: B\pi \to K$, where $K$ stands for $K(\mathbb Z/2,1)$ or $K(\mathbb Z/2,1) \times K(\mathbb Z/2,2)$, respectively. To work with both cases uniformly and to emphasise the dependence on these data we will from now on write $(\pi,u)$ for the representative of the $1$-type of $M$, denote its base space by $B(\pi,u)$, the associated stable bundle by $\gamma_{\pi,u}$ and the corresponding cobordism groups by $\Omega^{(\pi,u)}_*$. Note that by construction $M$ can be equipped with a $(\pi,u)$-structure, however, the $(\pi,u)$-structure is not unique in general. 

The following theorem is a special case of Theorem \ref{bordi} which we will work with in the sequel.

\begin{thm}\label{bord}
Let $M$ be a smooth, closed, connected manifold $M$ of dimension $\geq 5$ with normal $1$-type determined by $(\pi,u)$. Then $M$ admits a metric of positive scalar curvature if and only if for some (and then any) choice of $(\pi,u)$-structure on $M$, whose underlying map $M \to B(\pi,u)$ is a $2$-equivalence, we can represent the corresponding $(\pi,u)$-cobordism class by some manifold admitting a positive scalar curvature metric.
\end{thm}

We would like to emphasise that by definition the cobordism group $\Omega^{{(\pi, u)}}_*$ depends on the choice of the map $u$, and not just on the cohomology class $u$ it represents. Different choices of $u$ yield isomorphic cobordism groups, however, isomorphisms between the corresponding cobordism groups cannot be chosen canonically, so there is no straight way to compare cobordism classes in the cobordism rings for different choices of $u$. But this is also not relevant for the statement of the theorem. 

The theorem specialises as it should in the two well-studied cases. When $M$ is orientable and totally non-spin we find $\Omega^{(\pi, u)}_*\cong \Omega^{SO}_*(B\pi)$. In the case where $M$ is spinnable one gets $\Omega^{(\pi, u)}_* \cong \Omega^{Spin}_*(B\pi)$. As we will briefly touch upon in section \ref{ocr}, Stolz provided a more geometric interpretation of the groups $\Omega^{(\pi,u)}_*$ in general. Starting in the next section we, however, shall pursue a homotopy theoretic description (finally obtained in corollary \ref{mind}), which will eventually yield structural information in the form of our generalisation of the Anderson-Brown-Peterson splitting in section \ref{calc}. 

\subsection{Interpretation via twists.}
\label{choice-of-bordism-gropus}

To investigate the bordism groups $\Omega_*^{(\pi,u)}$ we use the set-up of parametrised homotopy theory. To make this explicit we first need to introduce a bit of notation.

Let $\xi$ be a vector bundle over a space $X$ that comes equipped with a map $q: X \rightarrow K$, where $K$ is a generalized Eilenberg space as in the previous section. In particular the total space of $\xi$ can naturally be regarded as a space over $K$. Let $M_X(\xi)$ be its fibrewise one-point compactification of the map $\xi \to X$. The space $M_X(\xi)$ is a sphere bundle (over $X$) which comes equipped with a canonical projection to $X$ as well as the section at infinity $s:X \to M_X(\xi)$ which maps a point $x \in X$ to the base point of the one-point compactification in the fibre over $x$. The \emph{fibrewise Thom space over $K$} is obtained from $M_X(\xi)$ by identifying two points $s(x)$ and $s(x')$ in the image of the infinity section, if $q(x) = q(x')$. We denote this fibrewise Thom space over $K$ by $M_K(\xi)$; it can indeed be identified with the pushout of the two maps $s$ and $q$, and thus can canonically be given the structure of a space over and under $K$: 
$$\begin{xy}\xymatrix@-1pc{X \ar[d]_{s} \ar[r]^{q}      & K \ar[d]^{\overline{s}}\\
                      M_X(\xi) \ar[r]_-{\overline{q}} & M_K(\xi) }
\end{xy}$$
We shall usually identify $K$ with the subspace $\overline{s}(K)$ in $M_K(\xi)$. It is also convenient to think of the underlying set of $M_K(\xi)$ as the union of the Thom spaces $M(\xi|_{q^{-1}(k)})$, where $k$ runs over the points in $K$. Note that $M_K(\xi) = X \sqcup K$ for $\xi$ the trivial zero-dimensional vector bundle over $X$.

Given two spaces $S$ and $T$ over $K$ equipped with sections $s$ and $t$ respectively let $S \wedge_K T$ denote the fibrewise smash product, obtained as the following pushout
$$\xymatrix@-1pc{s(K) \times_K T \cup S \times_K t(K) \ar[r] \ar[d] & S \times_K T \ar[d] \\
            K \ar[r] & S \wedge_K T}$$
which again is a space over $K$ with a section. The next lemma then is an easily verified generalisation of the property that the Thom space $M(\xi \times \xi')$ for arbitrary vector bundles $\xi$ and $\xi'$ is canonically homeomorphic to the smash product $M(\xi) \wedge M(\xi')$ of the individual Thom spaces. 
\begin{lem}\label{smash-over-K}
Given vector bundles $\xi$ and $\xi'$ over spaces $X$ and $X'$ and a pullback diagram
$$\begin{xy}\xymatrix@-1pc{B \ar[d]_{p'} \ar[rr]^{p} && X \ar[d]^{q}\\
                      X' \ar[rr]_{q'}            && K}
\end{xy}$$
there is a canonical homeomorphism 
$$M(p^*\xi \oplus {p'}^*\xi') \cong \big[ M_K(\xi) \wedge_K M_K(\xi') \big] / K$$
where the quotient on the right hand side is obtained by identifying the image of the section $\overline{s}$  at infinity to a single point $[K]$. 
\end{lem}

\begin{proof}[Proof of Lemma \ref{smash-over-K}]
The isomorphism sends the point $[K]\in\big[ M_K(\xi') \wedge_K M_K(\xi') \big] / K$ to the point at infinity in $M(p^*\xi \oplus {p'}^*\xi')$, while the complement of the point $[K]$, which can be identified with $\xi \times_K \xi'$, is mapped canonically to $p^*\xi \oplus {p'}^*\xi'$, which is the complement of the point at infinity in $M(p^*\xi \oplus {p'}^*\xi')$. 
\end{proof}
Below we shall apply lemma \ref{smash-over-K} to a specific sequence of examples. Recall from the preceding section that a representative of the normal $1$-type of a manifold $M$ is a stable vector bundle $\gamma^{\pi,u}$ over a suitable space $B(\pi,u)$. 
The structure of the stable vector bundle $\gamma^{\pi,u}$ by definition then yields a commutative diagram
$$\begin{xy}\xymatrix@-1pc{\cdots \ar[r] & \gamma^{\pi,u}_i \ar[d] \ar[r] &   \gamma^{\pi,u}_{i+1} \ar[d] \ar[r] & \cdots \ar[r]  & \gamma^{\pi,u} \ar[d]\\
\cdots \ar[r] & B(\pi,u)_{i} \ar[r] &   B(\pi,u)_{i+1} \ar[r] & \cdots \ar[r] & B(\pi,u)}
\end{xy}$$
The corresponding vector bundle maps $\gamma^{\pi,u}_{i} \times\mathbb R \to \gamma^{\pi,u}_{i+1} $ which also come with the structure of a stable vector bundle induce maps
$$ M(\gamma^{\pi,u}_{i}) \wedge S^1 \cong M(\gamma^{\pi,u}_{i} \times \mathbb R) \to M(\gamma^{\pi,u}_{i+1}) 
$$
We thus obtain a spectrum $M(\pi,u)$, given by the sequence of spaces $M(\gamma^{\pi,u}_{i})$ and the above structure maps. The Pontryagin-Thom theorem gives an isomorphism
$$ \Omega^{(\pi,u)}_* \cong \pi_* M(\pi,u).$$
 
Assume now that $w: BO \to K$ is a fibration. This allows us to define $B(\pi,u)$ to be the actual pullback of $BO$ along $u:B\pi\to K$, as this makes its defining square a homotopy pullback as required. Moreover, we have a canonical filtration of $B(\pi,u)$ given by pullbacks 
$$\begin{xy}\xymatrix@-1pc{B(\pi,u)_{i} \ar[d] \ar[r]^{p_i}    &  BO(i) \ar[d]^w\\
                      B\pi \ar[r]_-{u} &  K}
\end{xy}$$
while the pulbacks $p_i^*(\gamma_i)$ of the universal vector bundles $\gamma_i$ over $BO(i)$ provide a stable vector bundle $\gamma^{\pi,u}$ which is compatible with this filtration. Note that the inclusions $B(\pi,u)_{i} \subseteq B(\pi,u)_{i+1}$ are indeed cofibrations since the inclusions $BO(i) \subseteq BO(i+1)$ are cofibrations by assumption (see e.g.\ \cite{Ki}). In the following we will always work with a representative of the normal $1$-type of $M$ which arises this way.

Applying lemma \ref{smash-over-K} to the $0$-dimensional vector bundle over $B\pi$ and the universal bundle $\gamma_i$ over $BO(i)$ we find the following corollary:

\begin{cor}\label{twicob}
The maps $w$ and $u$ induce homotopy equivalences
	 $$M(\gamma_i^{\pi,u}) \simeq \big[ (B\pi \sqcup K) \wedge_K M_K\gamma_i \big] /K$$
\end{cor}

Note that all spaces $M_K\gamma_i$ come equipped with a reference map to $K$, and thus we can regard them as spaces over $K$. In fact the family of spaces $M_K\gamma_i$ can be equipped quite naturally with the structure of a so-called parametrised spectrum $M_K\gamma$, a notion that will be introduced in section \ref{section_tsc}. In the case of almost spin manifolds the parametrised spectrum  $M_K\gamma$ is equivalent to $\MSpinK$ from the introduction, and we will write $(\MSpinK)_i$ for $M_K\gamma_i$ in that case.  Just like an ordinary spectrum $E$ yields a generalised homology theory via
$$E_*(X) = \pi_*\big[(X \sqcup *) \wedge E\big]$$
for nice enough spaces $X$, parametrised spectra like $M_K\gamma$ represent twisted homology theories for spaces with a reference map to $K$. We will introduce twisted homology from this homotopical viewpoint that is not restricted to bordism theories in \ref{paraspec}. For a fixed point $k \in K$ the restriction of such a functor to spaces, whose reference maps is constant with value $k$, is a homology theory in the usual sense. For the parametrised spectrum $\MSpinK$ these restrictions give ordinary spin bordism. We will give a general description of $MSpin_*(X,\zeta)$ for $\zeta \colon X \rightarrow K$ using manifold cycles in Section \ref{fund}. 

\begin{rem}
The general set-up for a (homotopical) twisted bordism theory starts with a map \emph{out} of $BO$, say $w \colon BO \rightarrow K$, along which one may form the fibrewise Thom spaces $M_K\gamma_i$ as above. The cycle description for twisted spin cobordism given in \ref{fund} immediately extends to that case and if we denote the projection of the homotopy fibre of $w$ by $\theta \colon B \rightarrow BO$, the restriction of $M_K\gamma_*(-;-)$ to spaces with constant structure map is exactly $\theta$-bordism. To focus on matters at hand, we will, however, refrain from discussing details of the cycle models as the previous section encoded all their relevant features into the fibrewise Thom spaces.
\end{rem}

\subsection{Obstructions, conjectures and known results.}\label{ocr}
In this section we put our considerations into greater context (which is well-known to the experts). The aim is to describe the connection of our work with a variant of the Gromov-Lawson-Rosenberg conjecture, which we recall below. Let us first present yet another interpretation of the cobordism groups considered above due to Stolz \cite{St3}. \\

Stolz constructs Lie groups $G(n,\gamma)$ associated to a natural number $n$ and a supergroup $\gamma$. A supergroup he defines to be a triple $(G,w,c)$, where $G$ is a group, $w:G \rightarrow \mathbb Z/2$ is a group homomorphism and $c \in G$ is a central element in the kernel of $w$. Every vector bundle $E$ determines a supergroup $\underline{\pi}(E)$ with $G$ a certain extension of the fundamental group $\pi$ of the base space, and $w$ the orientation character composed with the projection of $G$ onto $\pi$. Both $c$ and the extension $G$ are related to the spinnability of the vector bundle $E$ and the spinnability of its pullback to the universal cover of the base. The case of interest here is the case where $E$ is the (stable) normal bundle of a manifold $M$, and for simplicity we write $\underline\pi$ for the corresponding supergroup in the sequel. Following Stolz' construction it is not hard to see that the space $B(\pi,u)_n$ introduced in section \ref{choice-of-bordism-gropus} is indeed a classifying space of $G(n,\underline\pi)$. It then follows that the cobordism groups $\Omega^{(\pi,u)}_*$ of subsection \ref{from-surgery-to-cobordism} can be intrepreted as groups of equivalence classes of $G(\underline{\pi})$-manifolds modulo $G(\underline{\pi})$-cobordisms. We will not make use of this identification outside this section, and therefore leave the verification to the reader for now. Details will be contained in \cite{HeJo}.

While the cobordism techniques discussed so far give a powerful method for proving existence of metrics with positive scalar curvature, there are also well-known obstructions given by indices of Dirac-type operators. Given a spin manifold $M$ of dimension $n$ with fundamental group $\pi$ these were coalesced into a single invariant $A(M) \in KO_n(C^*_r\pi)$ by Rosenberg in \cite{Ro0}, and this invariant was further generalised to an invariant $A(M) \in KO_n(C^*_r\underline{\pi})$ for almost spin manifolds by Stolz in \cite{St3} using the canonical $G(\underline{\pi})$-structure such a manifold carries; here $KO_n$ denotes topological $KO$-homology and $C^*_r\pi$ denotes the reduced group $C^*$-algebra of the group $\pi$, while $C^*_r\underline{\pi}$ is a certain generalisation of it for the supergroup $\underline{\pi}$ from \cite{St3}.
Improving upon a conjecture of Gromov and Lawson, Rosenberg then made the following conjecture.

\begin{conj}[Gromov-Lawson-Rosenberg conjecture]\label{glr}
A connected, closed, almost spin manifold $M$ of dimension $n \ge 5$ supports a metric of positive scalar curvature if and only if 
$$0 = A(M) \in KO_n(C^*_r\underline{\pi})$$
\end{conj}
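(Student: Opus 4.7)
The strategy is to reduce this conjecture to Conjecture \ref{ourconj} stated in the introduction. The ``only if'' direction is classical and entirely analytic: the Lichnerowicz identity for the Mi\v{s}\v{c}enko-Fomenko Dirac operator, combined with the almost-spin index theorem of Stolz \cite{St3} (extending Rosenberg's original argument \cite{Ro0}), shows that $A(M)$ vanishes whenever $M$ admits a metric of positive scalar curvature, and would simply be quoted. For the ``if'' direction I would first factor the analytic index through the twisted connective Atiyah-Bott-Shapiro orientation of Section 4: the class $\hat\alpha(M)$ sits in the composition
\begin{align*}
\Omega^K_n(B\pi,w^\nu(M)) \xrightarrow{\hat\alpha_*} ko_n(B\pi,w^\nu(M)) \longrightarrow KO_n(B\pi,w^\nu(M)) \xrightarrow{\mu} KO_n(C^*_r\underline\pi),
\end{align*}
whose total composite is $A$. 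Assuming $\mu$ is injective on the relevant class, for example via a strong Novikov-type input on $\underline\pi$, the hypothesis $A(M)=0$ forces $\hat\alpha(M)=0$, whereupon Conjecture \ref{ourconj} delivers the desired psc metric.

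The real content is thus Conjecture \ref{ourconj}, which I would attack by transplanting Stolz' program from \cite{St2} to the twisted setting. By Theorem \ref{bord} it suffices to produce a psc representative of the cobordism class $[M] \in \Omega^K_n(B\pi, w^\nu(M))$. After localising at $2$ and invoking the generalised Anderson-Brown-Peterson splitting of Theorem \ref{genspli}, the summand corresponding to the empty index set is $k_2o$ itself and is detected precisely by $\hat\alpha$; the vanishing hypothesis therefore places $[M]_{(2)}$ in the sum of the remaining $k_2o\langle n_J\rangle$-summands together with the Eilenberg-MacLane factors. For each such summand one would exhibit an explicit family of generating classes represented by psc manifolds, generalising Stolz' construction of the total spaces of $\mathbb{HP}^2$-bundles (which carry submersion metrics of positive scalar curvature) to the twisted $G(-,\underline\pi)$-setting. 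The $K \times sh^i H\mathbb Z/2$-factors are dispatched by essentially the same $\mathbb{HP}^2$-product trick, and away from $2$ the argument falls back to the already established oriented and spin cases.

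The principal obstacle will be the geometric realisation step at the prime $2$: for every algebraic generator of each $k_2o\langle n_J\rangle$-summand one must construct a concrete $G(-,\underline\pi)$-manifold, typically built as an $\mathbb{HP}^2$-bundle (or an iterate thereof) carrying the correct $w^\nu$-twisted reference map to $B\pi$, and verify -- using the $\underline{\mathcal A}$-module description of $H^*(M_2O,\mathbb Z/2)$ from Corollary \ref{cohom} together with the $\mathcal A(1)$-induced structure of $H^*(k_2o\langle n_J\rangle, \mathbb Z/2)$ from Theorem \ref{cohok} -- that these twisted bundles actually hit all the requisite classes in $\pi_*(M_2O_{(2)})$. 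A secondary but unavoidable difficulty is controlling the assembly map in the initial reduction: the reach of the conjecture as stated is essentially governed by injectivity properties of $\mu$ for the fundamental supergroup $\underline{\pi_1(M)}$, and any clean proof must either confine itself to groups for which this is known or pass to a suitable completion that circumvents the known obstructions in the purely reduced setting.
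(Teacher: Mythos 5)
This is not a theorem of the paper but a \emph{conjecture}, and one the paper explicitly flags as \emph{false in general}: immediately after stating \ref{glr} the authors cite Schick's counterexample \cite{Sc}. There is therefore no proof in the paper for your proposal to match, and a proposal that aims to ``prove'' the statement as posed is already off target. The surrounding discussion in the paper only shows how \ref{glr} is \emph{connected to} \ref{ourconj} (via the factorisation of $A$ through $\alpha$ and the observation following \ref{glr}), and is careful to say that this can be used to attack \ref{glr} for specific groups where the assembly-type map is understood, not in general. Your own caveats about needing injectivity of $\mu$ are correct, but they should have led you to the conclusion that the conjecture cannot be proved as stated, rather than to a programme for proving it.

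Your sketch of the ``if'' direction also does not survive contact with the paper's later results. You propose to ``transplant Stolz' program from \cite{St2} to the twisted setting'' by combining the generalised Anderson-Brown-Peterson splitting (\ref{genspli}), the $\underline{\mathcal A}$-module structure of $H^*(M_2O)$ from \ref{cohom}, and the twisted $\mathbb{HP}^2$-bundle transfer of Section 7, as though this were a routine adaptation. Section 7 shows exactly the opposite: Proposition \ref{noringsplit} rules out a coalgebra splitting of the twisted ABS orientation, and Theorem \ref{nogo} shows that the crucial module isomorphism $\underline{\mathcal A(1)}_\varphi \otimes_{\mathcal A(1)} M \cong \overline{H^*(M_2O)}$, the twisted analogue of the input to Stolz' splitting argument, simply \emph{fails}. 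The paper therefore explicitly demonstrates that the plan you outline breaks at the geometric realisation step, and \ref{ourconj} itself remains open; it is not a lemma you can quote to finish \ref{glr}, even conditionally.
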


While there are counterexamples even to the classical version for spin manifolds (\cite{Sc}) it is an open problem for manifolds with finite fundamental groups and indeed has been verified for several cases (e.g.~see \cite{BoGiSt} or \cite{BoGi}; there are also counterexamples with non-trivial twists, see \cite{JoSc}). For $Spin$-manifolds Rosenberg furthermore showed that his index invariant in $KO_n(C^*_r{\pi})$ depends only on the $Spin$-cobordism class of $M$ in $\Omega^{Spin}_n(B\pi)$ and indeed factors as
$$\Omega^{Spin}_n(B\pi) \stackrel{\alpha}{\longrightarrow} ko_n(B\pi) \stackrel{per}{\longrightarrow} KO_n(B\pi) \stackrel{asbl}{\longrightarrow} KO_n(C^*_r\pi)$$
where $\alpha$ denotes the Atiyah-Bott-Shapiro orientation, $per$ the canonical periodisation map and $asbl$ a certain assembly map. A similar decomposition using twisted $Spin$-cobordism and $KO$-theory also works more generally for almost spinnable manifolds, and a full discussion of the latter will be provided in \cite{HeJo}. \\
This decomposition of $A$ together with conjecture \ref{ourconj} from the introduction, theorem \ref{bord} and the following observation allows for Rosenberg's conjecture to be attacked by cobordism theory:

\begin{lem}
Let $\xi$ be a stable vector bundle over a space $B$ and $a: \Omega_n^\xi \rightarrow G$ a group homomorphism with $n \geq 5$. Assume that every $n$-dimensional $\xi$-manifold in the kernel of $a$, whose structure map to $B$ is a $2$-equivalence, admits a metric of positive scalar curvature. 

Then an $n$-dimensional $\xi$-manifold $M$, whose structure map $M \rightarrow B$ is a $2$-equivalence, admits a positive scalar curvature metric if and only if $a([M]) = a([N])$ for some $n$-dimensional $\xi$-manifold $N$ that does admit a positive scalar curvature metric. 
\end{lem}
\begin{proof}
	
To prove the non-trivial direction assume $a(M) = a(N)$. Then the difference $[M] - [N] = [M \sqcup \overline N]$ lies in the kernel of $a$ and by surgery we can, by the same procedure as in the proof of theorem \ref{bordi} in appendix B, produce a $\xi$-manifold $L$ that is $\xi$-cobordant to $M \sqcup \overline N$ and whose structure map is a $2$-equivalence. By assumption $L$ then carries a positive scalar curvature metric and since $[M] = [L \sqcup N]$, theorem \ref{bordi} yields the claim.
\end{proof}

As the untwisted version of Stolz's conjecture from the introduction (which we shall restate in \ref{ourconj} when all its constituents are defined) is known by the work of F\"uhring and Stolz (\cite{Fu, St2}) the lemma implies that the Gromov-Lawson-Rosenberg conjecture holds for manifolds with a particular fundamental group $\pi$ if and only if the kernel of the assembly map above can be spanned by $\alpha$-invariants of manifolds with positive scalar curvature. The positive results on the Gromov-Lawson-Rosenberg conjecture mentioned above are then mostly obtained by explicit comparison of $ko_n(B\pi)$ with $KO_n(C^*_r\pi)$.  In addition to being useful on its own, one can thus view Stolz' conjecture as a way of investigating conjecture \ref{glr} in the case of almost spin manifolds.\\
	By comparison, for a totally non-spin manifold $M$ Rosenberg tentatively predicted the existence of a metric with positive scalar curvature in general, since there are no Dirac-type operators associated with $M$. While this prediction also does not hold in general, the analogue of conjecture \ref{ourconj} in this case is known: If the orientation class $th([M]) \in H_n(B\pi)$ of an oriented manifold $M$ vanishes, then $M$ admits a metric of positive scalar curvature, compare \cite[Theorem 4.11]{RoSt2}. The corresponding statement using local coefficients in case $M$ is non-orientable holds as well. In that case one needs to consider the fundamental class in singular homology with local coefficients instead.

All in all, studying the existence of a positive scalar curvature metric for a given manifold has been reduced to the study of a much smaller homology group than the relevant bordism group, except for the case where the manifold at hand is almost spin but does not carry a spin structure itself. This paper can be regarded as a first step towards filling in this gap.

\section{Twisted spin cobordism}\label{section_tsc}

In this chapter we will present the models for the twisted $Spin$-cobordism spectrum and the twisted $KO$-spectrum that we will use and analyse in the sequel. First we will recall some basic facts on the parametrised stable homotopy category. Here we use the set-up of May and Sigur\dh{}sson, established in \cite{MaSi}. In particular, we will introduce the stable homotopy category over a base space $K$ and recall how spectra over a base space $K$ give rise to twisted (co)homology theories. Section \ref{prelim} then contains some preliminaries needed for the definition of the desired parametrised spectra in the following sections.
In section \ref{twispi} we finally introduce our model for the parametrised spin spectrum and prove 
$$\Omega^{(\pi,u)}_* \cong \MSpin_*(B\pi; u)$$
in \ref{mind}, where the right hand side denotes the homotopically defined spin cobordism groups of $B\pi$ twisted by $u$. It is this identification which drives the whole line of thought we present in this paper. In section \ref{twik} we present a similar spectrum representing twisted, real $K$-theory, whose connective covers appear in our generalisation of the Anderson-Brown-Peterson splitting. Throughout we will work within the category of compactly generated spaces.\\

We would like to point out that it will be crucial that the parametrised $Spin$-cobordism spectrum will be a module over the unparametrised one. To achieve this property within the set-up of May and Sigur\dh{}sson we indeed have to provide two models for the unparametrised $Spin$-cobordism spectrum. One named $MSpin$ (essentially from \cite{Jo2}), which is a ring spectrum, and a variant called $MSpin'$, which will be a module over $MSpin$ and in addition comes equipped with an action of a group whose homotopy type is $K(\mathbb Z/2,0) \times K(\mathbb Z/2, 1)$. Following May and Sigur\dh{}sson we will use this action to produce a parametrised spectrum via a Borel-construction. 

The individual terms of the ring spectrum $MSpin$ already very naturally enjoy an action of a group with the above homotopy type. That action, however, does not commute with the spectrum or ring structure  (but very nicely interacts with it). The fact that it cannot directly be used as a fibre in the parametrised spectrum  is somewhat unsatisfactory. This issue can probably be resolved in an adaptation of the May-Sigur\dh{}sson set-up.

Let us also mention that if one's goal was only to obtain parametrised spectra with the desired homotopy types one could equally well employ the techniques of \cite{AnBlGeHoRe} in an $\infty$-categorical set-up. It seems, however, that one looses the close connection of our parametrised spectra to geometry and classical index theory. For a discussion of the latter see the forthcoming \cite{HeJo}.\\

\subsection{Parametrised spectra after May and Sigur\dh{}sson.}\label{paraspec}

In this section we follow \cite{MaSi} and review some of the basic set-up of parametrised spectra. We essentially employ orthogonal spectra in their parametrised form as developed by them, however, we shall use right instead of left $\mathbb S$-modules since this fits better with the usual conventions from bundle theory. The entire section is written with a non-expert in mind and can probably safely be skipped by anyone with basic knowledge of parametrised homotopy theory. 

\begin{defi}
An ex-space over $K$ is a space together with a map to $K$ and a section of that map which should be thought of a choice of basepoint in each fibre. A map of ex-spaces $X$ and $Y$ is a map $X \rightarrow Y$ which preserves both fibres and their basepoints.

The external wedge product $X \overline\vee Y$ of ex-spaces $X$ and $X'$ over $K$ and $K'$ with sections $s$ and $s'$, respectively, is given by the following pushout 
$$\xymatrix@-1pc{K\times K' \ar[rr]^{id \times s'} \ar[d]_{s \times id} && K \times X' \ar[d] \\
          X \times K'\ar[rr] && X \overline\vee Y}$$
while the external smash product $\barwedge$ for ex-spaces $X$ over $K$ and $X'$ over $K'$ with sections $s$ and $s'$, respectively is given by the following pushout
$$\xymatrix@-1pc{X \overline\vee Y \ar[rr] \ar[d] && X \times Y \ar[d] \\
          K \times K' \ar[rr] && X \barwedge Y,}$$
both which describe ex-spaces over $K \times K'$. If $K = K'$ pulling the external wedge and smash product back along the diagonal yields the fibrewise wedge and smash product smash product. 
 We will denote the latter by $\wedge_K$ as in section \ref{choice-of-bordism-gropus}, but for the former will suppress the subscript $K$ and write $\vee$ instead of $\vee_K$ to avoid awkward notation in Section \ref{calc} (this is the only place it will appear again); no external wedge products will appear in the remainder of this paper.
Finally, there are function ex-spaces $F_K(X,Y)$ defined so that the functor $F_K(X,-)$ is right adjoint to $X \wedge_K -$. 
\end{defi}
 Note that given a space $X$ over $K$ the disjoint union $X \sqcup K$ is an ex-space in a canonical way.
\begin{defi}
A parametrised orthogonal sequence over $K$ is a sequence of ex-spaces $X_n$ over $K$ together with a continuous left $O(n)$-action on $X_n$ by ex-maps for each $n \in \mathbb N$. A map between two such objects is a sequence of equivariant ex-maps.
The pre-smash product of two orthogonal sequences $X$ and $Y$ is the sequence given by the following wedge product of balanced smash product spaces over $K$
$$(X \wedge_K Y)_n = \bigvee_{i = 0}^n O(n)_+ \underset{O(p) \times O(n-p)}{\barwedge} X_p \wedge_K Y_{n-p}$$ with action induced by the left operation on the left factor. Similarly, there is an external smash product $\barwedge$ of orthogonal sequences with input two orthogonal sequences, one over $K$ and one over $K'$, and output one over $K \times K'$. The two kinds of pre-smash products are related by $X \wedge_K Y = \Delta^*(X \barwedge Y)$ where $X,Y$ are sequences over $K$, $\Delta$ is the diagonal map of $K$ and $\Delta^*$ means the degreewise pullback.
\end{defi}

As in the non-parametrised case we have the fundamental orthogonal sequence ${\mathbb S}_K$ given by $({\mathbb S}_K)_n = K \times (\mathbb R^n)^+$ with the obvious structure maps given by the projection to the left factor and section which maps a point in $K$ to the point at infinity in the fibre, while the $O(n)$-action is given through the action on the right factor. It also comes with a canonical map ${\mathbb S}_K \wedge_K {\mathbb S}_K \rightarrow {\mathbb S}_K$ making it a commutative monoid with respect to the symmetric monoidal structure given by the pre-smash product.

\begin{defi}
A parametrised orthogonal spectrum over $K$ is an orthogonal sequence $X$ over $K$ together with a map $\sigma \colon X \wedge_K {\mathbb S}_K \rightarrow X$, that makes $X$ into a right ${\mathbb S}_K$-module. We denote the category of parametrised spectra over $K$ by $\mathcal S_K$.
\end{defi}

The pre-smash products lift to smash products $\wedge_K: \mathcal S_K \times \mathcal S_K \rightarrow \mathcal S_K$ and $\barwedge: \mathcal S_K \times \mathcal S_{K'} \rightarrow \mathcal S_{K \times K'}$ by taking the coequaliser of the two ${\mathbb S}_K$-multiplications on the pre-smash products of spectra over $K$. Furthermore, a parametrised spectrum can be smashed with an ex-space levelwise resulting in a new parametrised spectrum, a process we shall also denote by $\wedge_K$. The analoguous statement also hold for function spectra.

 May and Sigur\dh{}sson then proceed to put two model structures on the arising category of parametrised spectra, namely the \emph{level}- and the \emph{stable} structure (\cite[Theorem 12.1.7 \& Theorem 12.3.10]{MaSi}). Except for the following paragraph and the proof of proposition \ref{norepl} only the weak equivalences of the stable structure will matter. These are the maps $X \rightarrow Y$ that induce weak homotopy equivalences of homotopy fibre spectra $X^h_{|k} \rightarrow Y^h_{|k}$ for each point $k \in K$; May and Sigur\dh{}sson define these homotopy fibre spectra via the derived functor of the fibre functor $(-)_{|k} \colon \mathcal S_K \rightarrow \mathcal S$ for the level model structure in \cite[Definition 12.3.4]{MaSi}, but one can also give a direct definition: For a parametrised spectrum $X$ with projections $p_n \colon X_n \rightarrow K$ and sections $s_n \colon K \rightarrow X_n$ the various homotopy fibres of the $p_n$ with respect to $k$ fit into a spectrum: Picking the mapping path space $\{(x,w) \in X_n \times K^I \mid w(0) = k, w(1) = p_n(x)\}$ of $p_n$ as the model for the homotopy fibre for definiteness' sake, the adjoint of the suspension map 
$$\mathrm{hofib}(p_n) \rightarrow \Omega\mathrm{hofib}(p_{n+1})$$
sends $(x,w)$ to the loop starting with the path $t \mapsto (s_{n+1}(w(t)),w(t \cdot -))$, then running the loop $t \rightarrow (\sigma_n(x,t),w)$ and running the path backwards; here $\sigma_n \colon X_n \times I \rightarrow X_{n+1}$ denotes the structure map precomposed with the quotient map $X_n \times I \rightarrow X_n \wedge_K (K \times S^1)$.

For a spectrum whose projections $p_n$ are fibrations and whose sections $s_n$ are cofibrations the inclusion $X_{|k} \rightarrow X_{|k}^h$ is a weak equivalence. For the spectra we shall have to consider later this will always be the case, whence stable equivalences between them are detected on their actual fibres. \\

Before we present the definition of the twisted (co)homology theories obtained from a parametrised spectrum, one more comment is in order. When considering smash products, function spectra and similar constructions they all have to be interpreted in the derived sense. May and Sigur\dh{}sson indeed prove that all functors occuring in the following formulae are Quillen functors and hence can be interpreted in the derived sense with one exception: Derived function spectra are constructed directly as an adjoint to the derived functor of $\wedge_K$ via a Brown representability argument. 

They put
\begin{align*}
E_*(X;\zeta) &:= \pi_*\Theta\big((X \sqcup K)\wedge_K E\big) \\
E^*(X;\zeta) &:= \pi_{-*}\Gamma\big(F_K(X \sqcup K,E)\big)
\end{align*}
for $\zeta: X \to K$ obtaining a co- and a contravariant functor from the category $Top/K$ of spaces over $K$ to the category of graded abelian groups $Gr Ab$. Here $\Theta: \mathcal S_K \rightarrow \mathcal S$ denotes the degreewise collapse of the base-section, and $\Gamma: \mathcal S_K \rightarrow \mathcal S$ denotes degreewise sections of the structure maps. These functors are the left- and right-adjoint of the functor $\mathcal S \rightarrow \mathcal S_K$, which degreewise takes the cartesian product with $K$, in other words the pullback functor along $K \rightarrow \ast$.

For the case $K = *$ this definition recovers the classical one for the (co)homology theory associated to a spectrum and May and Sigur\dh{}sson show that the generalisation deserves the name \emph{twisted (co)homology} by establishing generalisation of the many well-known properties known for usual (co)homology, e.g. invariance under weak equivalences over $K$, Mayer-Vietoris sequences, limit sequences for unions, both a Serre-type and an Atiyah-Hirzebruch type spectral sequence, and more.

\begin{rem}\label{remark}
In \cite{MaSi} multiplicative structures are not much investigated. However, in direct generalisation of the unparametrised case, given parametrised spectra $E,F$ and $G$ over spaces $K, K'$ and $L$ and a map $f:K\times K' \rightarrow L$ any map of spectra $g:E \barwedge F \rightarrow f^*G$ gives rise to an exterior product of (co)homology theories. If the spectra all agree, $f$ is an associative multiplication on $K$ and $g$ is fibre-homotopy associative and maybe also fibre-homotopy-commutative, then we would obtain multiplications with the same properties. We do not know how to produce such multiplication maps on the parametrised spectra representing twisted $KO$-theory or $Spin$-cobordism. This seems to be coming from a systematic problem of the set-up given in \cite{MaSi}, see also \ref{honmul}, which is addressed in recent work of first author with Sagave and Schlichtkrull \cite{HSS19}: There, a generalisation of the categories of parametrised spectra of May and Sigur\dh{}sson is given, which allows for a clear treatment of multiplicative structures at the expense of allowing the base to be an $\mathcal I$-space. In a companion paper \cite{HS19} we use this generalisation to produce parametrised commutative ring spectra weakly equivalent to the ones considered here.

\end{rem}

\subsection{Preliminaries}\label{prelim}

In this subsection we collect notations for and properties of the objects that enter in the definitions of the models for twisted $Spin$-cobordism and $K$-theory spectra to be introduced in sections \ref{twispi} and \ref{twik}.

Let $L^2(\mathbb R^n)$ denote the Hilbert space of square integrable, real valued functions on $\mathbb R^n$, which we regard as a $\mathbb Z/2$-graded Hilbert space by its splitting into even and odd functions. Let $Cl_n$ denote the Clifford algebra of $\mathbb R^n$ with $v^2 = \langle v, v\rangle$, where $v$ is a vector in $\mathbb R^n$ and where the brackets on the right hand side denote the standard scalar product. The algebra $Cl_n$ can be regarded as a quotient of the tensor algebra on $\mathbb R^n$ and as such inherits a $\mathbb Z/2$ grading from the usual $\mathbb Z$-grading of the tensor algebra. There also is a canonical anti-involution $(-)^t\colon Cl_n \to Cl_n$ which leaves $\mathbb R^n\subset Cl_n$ invariant. Using the left regular representation $l \colon Cl_n\to End(Cl_n)$ and the normalised trace $\mathrm{Tr}_n: End(Cl_n) \to {\mathbb R}$ one obtains a scalar product on $Cl_n$ by declaring $\langle a,b \rangle =\mathrm{Tr}_n(l(a^tb))$. 
We further denote $L_n = L^2(\mathbb R^n) \otimes Cl_n$. The tensor products of ${\mathbb Z}/2$-graded Hilbert spaces will always receive its usual $\mathbb Z/2$-grading. With these conventions we have canonical isomorphisms $L_n \otimes L_m \rightarrow L_{n+m}$ of graded Hilbert spaces. The Koszul-signed symmetry isomorphism $\tau: X \otimes Y \rightarrow Y \otimes X$, sending homogeneous elements $x,y$ to $(-1)^{|x||y|}y \otimes x$ provides a symmetric monoidal structure on the category of $\mathbb Z/2$-graded Hilbert spaces.
\\

We shall need to consider two types of orthogonal groups acting on $L_n$. Let $\mathcal O(L_n)$ denote the group of continuous, linear, orthogonal automorphisms of $L_n$, which are either even or odd, and let $\mathcal O_n \subset \mathcal O(L_n)$ be the subgroup of right $Cl_n$-linear operators. We endow these groups with the compact-open topology (and not the norm topology) to guarantee that the $O(n)$-action on $L_n$ induces a continuous $O(n)$-action on $\mathcal O_n$ and $ \mathcal O(L_n)$ respectively. By Schur's lemma, the components of $\mathcal O_n$ (and clearly also $\mathcal O(L_n)$) are all homeomorphic (according to the representation type of $Cl_n$) to at most twofold products of ${\mathcal O}(H), {\mathcal U}(H)$ or ${\mathcal Sp}(H)$ for some separable Hilbert spaces $H$, which is of infinite dimension, since the rotation invariant functions form an infinite dimensional subspace of $L^2(\mathbb R^n)$ on which the Clifford algebra acts trivially. Hence by Kuiper's theorem \cite[Proposition A2.1]{AtSe} and its real and quaternionic counterparts all components of $\mathcal O_n$ and $ \mathcal O(L_n)$ are contractible. 

Both groups, $\mathcal O_n$ as well as $\mathcal O(L_n)$ are in fact supergroups in the sense of Stolz \cite{St3}. Recall that a supergroup consists of a triple consisting of a topological group $G$, a grading homomorphims $\pi: G\to \mathbb Z/2$, and a central element $c$, which is even in the sense, that $\pi(c) = 0$, and with $c^2 = e$. In the case at hand the grading homomorphism is the composition $|\cdot| \colon \mathcal O_n \subseteq \mathcal O(L_n) \rightarrow \mathbb Z/2$, and $-\id \in \mathcal O_n \subseteq \mathcal O(L_n)$ the distinguished central element.\\
The identifications $L_n \otimes L_m \cong L_{n+m}$ induce concatenation operations
$$\mathcal O(L_{n}) \widehat \times \mathcal O(L_{m}) \longrightarrow \mathcal O(L_{n+m})$$ 
which are maps of supergroups; here the left hand side denotes the product of supergroups. For two supergroups $G$ and $H$ this product is given by $$G \widehat \times H = (G \times H) /\mathbb Z/2$$ where $\mathbb Z/2$ acts on $G \times H$ diagonally via the specified order $2$ elements $c$ and $d$ of $G$ and $H$. The multiplication is then given by $$[g,h]\cdot [g',h'] = [c^{|h||g'|} gg', hh'] = [gg',d^{|h||g'|} hh'].$$
By design there is an isomorphism 
\[\mathcal O(H) \widehat \times \mathcal O(H') \rightarrow \mathcal O(H \otimes H')\]
given by the graded tensor product of operators. In our case the concatenation operation isomorphism given by sending $(U,U')$ to the operator
$$\xymatrix{
L^2(\mathbb R^{n+m}) \otimes Cl_{n+m} \ar[r]^-\cong & L^2(\mathbb R^n) \otimes L^2(\mathbb R^m) \otimes Cl_n \otimes Cl_m \ar[d]_{\id \otimes \tau \otimes \id}\\
& L^2(\mathbb R^n) \otimes Cl_n \otimes L^2(\mathbb R^m) \otimes Cl_m \ar[d]_{U \widehat{\otimes}  U'} & \\
& L^2(\mathbb R^n) \otimes Cl_n \otimes L^2(\mathbb R^m) \otimes Cl_m \ar[d]_{\id \otimes \tau \otimes \id} & \\
& L^2(\mathbb R^n) \otimes L^2(\mathbb R^m) \otimes Cl_n \otimes Cl_m \ar[r]^-{\cong} & L^2(\mathbb R^{n+m}) \otimes Cl_{n+m}
}$$
This composition introduces a plethora of signs. It is straight-forward to check that it restricts to a pairing $\mathcal O_n \widehat\times \mathcal O_m \rightarrow \mathcal O_{n+m}$.
\\
Similarly the groups $Pin(n) \subseteq Cl_n$ form supergroups using the grading homomorphism $Pin(n) \rightarrow \mathbb Z/2$ induced from the Clifford algebra, and $-1 \in Pin(n)$ as the distinguished central element. The direct sum operation induces an according pairing $\Pin(n) \widehat\times \Pin(m) \to \Pin(n+m)$.\\

Consider now the homomorphism $j: Pin(n) \rightarrow \mathcal O_n$, given by
$$p \longmapsto \big\{f \otimes c \longmapsto (-1)^{|p||f|} f \circ \rho(p)^{-1} \otimes p \cdot c\big\}$$
where $\rho: Pin(n) \rightarrow O(n)$ is the usual surjection. The homomorphism $j$ is in fact a morphism of supergroups and thus descends to a grading preserving homomorphism $j: O(n) \rightarrow P\mathcal O_n$. The grading on $O(n)$, which by construction has to be the one inherited from $Pin(n)$ via $\rho$, coincides with the determinant homomorphism, while $P\mathcal O_n$ stands for the projectivisation of $\mathcal O_n$, i.e.~the graded group $P\mathcal O_n$ which is the quotient of $\mathcal O_n$ by the subgroup $\{\pm \id\}$ generated by the distinguished central element.  Note that $-\id \in O(n)$ is \emph{not} mapped by $j$ to $-\id \equiv \id \in P\mathcal O_n$, whereas $Spin(n)$ and $SO(n)$ are mapped into the even elements in $\mathcal O_n$ and $P\mathcal O_n$ respectively.\\ 
The map $j$ is compatible with the concatenation operation on the $\mathcal O_n$ in the sense that the following diagram commutes
$$\xymatrix@-1pc{\Pin(n) \widehat\times \Pin(m)\ar[d]_{j \times j} \ar[rrr]^-{[x,y] \mapsto x \otimes y} &&& \Pin(n+m) \ar[d]^j\\
             \mathcal O_n \widehat\times \mathcal O_m \ar[rrr] &&& \mathcal O_{n+m}}$$ 
\\
There is a second important homomorphism we have to consider. Let $i: O(n) \rightarrow \mathcal O^{ev}(L_n)$ be given 
$$q \longmapsto \big\{ f \otimes c \longmapsto f \circ q^{-1} \otimes Cl_{q}(c)\big\}$$
where $Cl_{q}: Cl(n) \to Cl(n)$ is the homomorphism induced by $q\in O(n)$.
While not quite Clifford linear the operator $i(q)$ satisfies
$$\big(i(q)\big)\big((g \otimes c) \cdot d\big) = \Big(\big(i(q)\big)(g \otimes c)\Big) \cdot Cl_{q}(d)$$ so that conjugating a Clifford-linear operator with $i(q)$ again yields a Clifford-linear operator, an observation which will become important later.
Note that $-\id \in O(n)$ again does not map to $-\id \in \mathcal O(L_n)^{ev}$ under $i$. Obviously, the diagram
$$\xymatrix@-1pc{O(n) \times O(m)\ar[d]_{i \times i} \ar[rr] && O(n+m) \ar[d]^i\\
           \mathcal O(L_n)^{ev} \times \mathcal O(L_m)^{ev} \ar[d] \ar[rr] && \mathcal O(L_{n+m})^{ev} \ar[d] \\
						\mathcal O(L_n) \widehat\times \mathcal O(L_m) \ar[rr] && \mathcal O(L_{n+m})}$$
commutes as well. \\
We will frequently have to conjugate Clifford-linear operators $T$ on $L_n$ with elements either of the form $i(\rho(p))$ or $j(p)$ for $p \in \Pin_n$. Let us note straight away, that these conjugations are the same: the two elements 
$$i(\rho(p)) \circ T \circ i(\rho(p^{-1})) \text{    and    } (-1)^{|p||T|} j(p) \circ T \circ j(p^{-1})$$
agree. The verification of this statement is a straight forward, tedious calculation which requires a good book keeping of the signs involved. The main ingredient is the simple fact that $Cl_{\rho(p)}(c) = (-1)^{|c||p|}pcp^{-1}$ for $p,c \in Cl_n$. The fact that the two actions agree is implicit in \cite[Lemma 6.1]{Jo2}. However, the leading sign in the definition of the second conjugation action in \cite{Jo2} is given incorrectly.\\

As mentioned above, for constructing the desired parametrised spectra in the following chapters we need to modify the spectra constructed in \cite{Jo2} slightly. For this modification we need a second sequence of graded Hilbert spaces. Let $L_n'$ denote $\ell^2 \otimes L_n$, where $\ell^2$ is the space of square summable sequences, graded by the decomposition into subspaces of functions with support on the even and odd natural numbers respectively. In analogy to the above let $\mathcal O (\ell^2)$ and $\mathcal O(L'_n)$ denote the orthogonal operators on $\ell^2$ and $L'_n$ respectively, which are either even or odd, and let $\mathcal O'_n \subseteq \mathcal O(L'_n)$ be the subgroup of right Clifford linear operators. There is an obvious inclusion $\mathcal O(L_n) \rightarrow \mathcal O(L'_n)$ given by $U \mapsto \id_{\ell^2} \widehat\otimes U$ which in fact is a homotopy equivalence. There also is an obvious concatenation operation
$\mathcal O(L'_n) \widehat\times \mathcal  O(L_m) \rightarrow \mathcal O(L'_{n+m})$, which in particular makes the diagram 
$$\xymatrix@-1pc{\mathcal O(L_n) \widehat\times \mathcal O(L_m) \ar[d] \ar[rr] && \mathcal O(L_{n+m}) \ar[d]\\
            \mathcal O(L'_n) \widehat\times \mathcal O(L_m) \ar[rr] && \mathcal O(L'_{n+m})}$$ 
commute. The inclusion as well as the commutative diagram restrict accordingly to the subgroups of right Clifford linear operator. There are, however, no obvious identifications $L'_n \otimes L'_m \rightarrow L'_{n+m}$ and thus no concatenation operators $\mathcal O(L'_n) \widehat\times \mathcal O(L'_m) \rightarrow \mathcal O(L'_{n+m})$. This will result in the fact that the modified versions of the $Spin$ and $K$-theory spectra will not be ring spectra. The redeeming feature of $\mathcal O(L'_n)$ however is, that the inclusion $\mathcal O(\ell^2) \rightarrow \mathcal O'_n \subset \mathcal O(L'_n)$ which sends $U$ to $U \otimes \id_{L_n}$ creates a subgroup of $\mathcal O(L'_n)$, which commutes with the image of $\mathcal O_n$ and $\mathcal O(L_n)$, respectively. We will always view $\mathcal O(\ell^2)$ and $\mathcal O_n$ as subgroups of $\mathcal O(L'_n)$ in this way. The fact that the two subgroups commute will be crucial in the definition of the desired parametrised spectra below.

\subsection{A model for twisted $Spin$-cobordism.}\label{twispi}

We use a slight variation of the model for $MSpin$ appearing in \cite[Chapter 6]{Jo2} for defining its parametrised counterpart. As already pointed out in the introduction to this chapter we in fact will first introduce two models for the $Spin$-cobordism spectrum. One which we equip with a ring structure, and another one where we trade the strict multiplication for an action of the projective orthogonal group $P\mathcal O(\ell^2)$ by maps of orthogonal spectra. 

\begin{con}\label{twispin}
Recall from the previous section the definition of the homomorphism $j: Spin(n) \rightarrow \mathcal O^{ev}_n$. It is the inclusion of a subgroup. The topological group $\mathcal O^{ev}_n$ is a component of $\mathcal O_n$ and hence contractible, as we know already from the previous section. On the other hand by \cite[Theorem 4.1]{Gl} the projection $\mathcal O^{ev}_n \longrightarrow \mathcal O^{ev}_n / Spin(n)$ has a local section and thus provides $Spin(n)$-prinicipal bundle. Furthermore, the topology of $O^{ev}_n$ is metrizable (e.g.~see \cite[Proposition II.2.7]{Ta}) and hence by \cite[Corollary 1]{Michael} its base space is paracompact. Therefore we can and indeed will from now on use this projection as a model for the universal $Spin(n)$-principal bundle
\[ESpin(n) := \mathcal O^{ev}_n \longrightarrow \mathcal O^{ev}_n / Spin(n) =  P\mathcal O_n / O(n) =: BSpin(n)\]
We then define the $n$-th space of the spectrum $MSpin$ to be the Thom space of the associated vector bundle. The latter can explicitly be written down as the following balanced smash product
$$MSpin_n = (\mathcal O^{ev}_n)_+ \wedge_{Spin(n)}S^n = (P\mathcal O_n)_+ \wedge_{O(n)}S^n$$
which is exactly  as in \cite[Definition 6.3]{Jo2}. The $O(n)$ action on this space is given by mapping $q \in O(n)$ to the automorphism $q\cdot: MSpin_n \to MSpin_n$ given by
$$[U,s] \longmapsto [i(q) \circ U \circ i(q^{-1}), q(s)]$$
There are two things to be checked here. At first one needs to verify that conjugation with $i(q)$ preserves Clifford linearity. This is handled by the comment made after the introduction of $i$ in the previous section. Secondly one needs to check that the automorphism is well defined in the sense that the formula is compatible with taking equivalences classes. To see this it is helpful to recall that the conjugation by $i(q)$ also can be expressed by the conjugation by $j(p)$ for a choice of $p$ with $\rho(p)=q$. The description of the automorphism above then simplifies to 
$$[U,s] \longmapsto [j(p) \circ U \circ j(p^{-1}), \rho(p)(s)] = [(j(p) \circ U, s]$$
One may wonder why we defined the action using $i$ instead of $j$ in the first place. We wanted to stay consistent with the definition introduced in \cite{Jo2}, where a description using the homomorphism $i$ is mandatory due to the functoriality built in in the notion of an orthogonal equivariant spectrum in the sense of \cite{MaMa}, which was the foundational set-up used in \cite{Jo2}.\\ 
The orthogonal sequence $MSpin$ can be given the structure of a commutative monoid using the multiplication maps
$$MSpin_n \wedge MSpin_m \rightarrow MSpin_{n+m}$$ 
that are induced from the concatenation $P\mathcal O_n \times P\mathcal O_m \rightarrow P\mathcal O_{n+m}$ and the usual identification $S^n \wedge S^m \cong S^{n+m}$. That this multiplication maps are well-defined immediately follows from the diagram showing that $j$ is compatible with the concatenation operation on the $\mathcal O_n$. The fact that the multiplication maps are $O(n) \times O(m)$-equivariant follows from the analogous diagram involving $i$. The various multiplication maps together give the orthogonal sequence $MSpin$ the structure of a commutative monoid in the category of orthogonal sequences. 
Furthermore, the maps
$$S^n \rightarrow MSpin_n, \quad s \longmapsto [id_{L_n}, s]$$
which are obviously $O(n)$-equivariant, induce a map of monoids $\mathbb S \to MSpin$ in the category of orthogonal sequences which then gives $MSpin$ the structure of an $\mathbb S$-module. Altogether these definitions turn $MSpin$ into a commutative, orthogonal ring spectrum.\\

We proceed to introduce the second version of the $Spin$ cobordism spectrum, denoted $MSpin'$, by mimicking the above construction using $\mathcal O'_n$ instead of $\mathcal O_n$, i.e.~we put
$$MSpin'_n = (P\mathcal O'_n)_+ \wedge_{O(n)}S^n$$
Through the inclusions $\mathcal O_n \rightarrow \mathcal O'_n$ we obtain a map $MSpin \rightarrow MSpin'$, which is a weak equivalence in every degree. Furthermore the maps $P\mathcal O'_n \times P\mathcal O_m \rightarrow P\mathcal O'_{n+m}$, provide multiplication maps $$MSpin'_n \wedge MSpin_m \rightarrow MSpin'_{n+m}$$
making $MSpin'$ a right $MSpin$-module spectrum. 
The inclusion $P\mathcal O(\ell^2) \rightarrow P\mathcal O'_n$ provides an action of $P\mathcal O(\ell^2)$ on each $MSpin_n$ by left multiplication on the $P\mathcal O'_n$ factor, i.e. with $U\in \mathcal O(\ell^2)$ and $U' \in \mathcal O'_n$ the action above is explicitly given by
\begin{align*}
P\mathcal O(\ell^2) \times (P\mathcal O'_n) \times_{O(n)} S^n &\longrightarrow (P\mathcal O'_n) \times_{O(n)} S^n \\
([U], [U'], v) &\longmapsto \big([U \otimes \id_{L_n}) \circ U'], v \big)
\end{align*}
This action commutes with all structure maps, since it commutes with the multiplication by elements in $P\mathcal O_n$ through which all the additional structure is defined. As a result $MSpin'$ is a $P\mathcal O(\ell^2)$-object in $MSpin$-module spectra.
\end{con}

We now want to employ the construction of \cite[Section 22.1]{MaSi} to produce a parametrised spectrum over the classifying space for $P\mathcal O(\ell^2)$. Even though we will not endow the arising parametrised spectrum $\MSpinK$ with multiplicative structures we need to be slightly careful with our choice for the base space. The reason is that we \emph{do} want to use multiplicative properties of the trivially parametrised spectrum $BP\mathcal O(\ell^2) \times H\mathbb Z/2$ and the set-up of May and Sigur\dh{}sson seems to only accomodate this when the base space is a topological monoid (and not just a space equipped with a coherent multiplication). We therefore analyse the homotopy type of $BP\mathcal O(\ell^2)$ before proceeding:

\begin{lem}\label{twimul}Let $K$ denote the product of two Eilenberg-MacLane spaces, one of type $(\mathbb Z/2,1)$, the other of type $(\mathbb Z/2,2)$ and $BO[2]$ the second Postnikov truncation of $BO$. Then:
\begin{enumerate}
\item[i)] There is a preferred class of equivalences between $BP\mathcal O(\ell^2)$ and $BO[2]$.
\item[ii)] Under that equivalence the multiplication on $BP\mathcal O(\ell^2)$ induced by any choice of isomorphism $i: \ell^2 \rightarrow \ell^2 \otimes \ell^2$ corresponds to the Whitney sum on $BO[2]$.
\item[iii)] There are exactly two homotopy classes of equivalences between $BO[2]$ and $K$ and under both of them the Whitney sum translates into the map $K \times K \rightarrow K$ represented by
\end{enumerate}
$$(\iota_1 \times 1 + 1 \times \iota_1, 1 \times \iota_2 + \iota_2 \times 1 + \iota_1 \times \iota_1) \in H^1(K \times K,\mathbb Z/2) \times H^2(K \times K,\mathbb Z/2)$$
\end{lem}

The analogue of lemma \ref{twimul} for the complex projective unitary group of a separable complex Hilbert space is proved for example in \cite[Proposition 2.3]{AtSe} using slightly different language. We present a proof for the projective orthogonal group $P\mathcal O(\ell^2)$ which fits well with the given context.

\begin{proof} 
The  morphism $j: Pin(n) \to \mathcal O_n$ of supergroups induces a group homomorphism $O(n) \to P \mathcal O_n \hookrightarrow P\mathcal O(L_n)$, that we still will denote by $j$. Now fix some isomorphism $\mu : \ell^2 \otimes \ell^2 \rightarrow \ell^2$. It induces isomorphisms $L_n' \otimes L_m' \rightarrow L_{n+m}'$ by first reshuffling factors, which we also denote by $\mu$. Then consider the following commutative diagram of group homomorphisms
$$\xymatrix{P\mathcal O(\ell^2) \times P\mathcal O(\ell^2) \ar[r]^-\otimes \ar[d]_-\simeq^{(\cdot \ \otimes \id_{L_n}) \times (\ \cdot \ \otimes \id_{L_m})} & P\mathcal O(\ell^2 \otimes \ell^2) \ar[r]^-{\mu \circ \ \cdot \ \circ \mu^{-1}}\ar[d]_-\simeq^{\cdot \ \otimes \id_{L_n} \otimes \ \cdot \ \otimes \id_{L_m}} & P \mathcal O(\ell^2) \ar[d]_-\simeq^{\cdot \ \otimes \id_{L_{n+m}}}\\
            P\mathcal O(L'_n) \times P\mathcal O(L'_m) \ar[r]^-\otimes            & P\mathcal O(L_n' \otimes L_m') \ar[r]^-{\mu \circ \ \cdot \ \circ \mu^{-1}}    & P\mathcal O(L_{n+m}') \\
						P\mathcal O(L_n) \times P\mathcal O(L_m) \ar[r]^-\otimes \ar[u]^-\simeq_{(\id_{\ell^2} \otimes \ \cdot) \times (\id_{\ell^2} \otimes \ \cdot)}     & P\mathcal O(L_n \otimes L_m) \ar[r]^-\cong  \ar[u]^\simeq_{\id_{\ell^2} \otimes \ \cdot \ \otimes \id_{\ell^2} \otimes \ \cdot}      & P\mathcal O(L_{n+m}) \ar[u]^-\simeq_{\id_{\ell^2} \otimes \ \cdot} \\		
						O(n) \times O(m)\ar[u]_{j\times j} \ar[rr]^-\oplus && O(n+m) \ar[u]_j}$$
For $n \geq 3$ the map $Bj: BO(n) \rightarrow BP\mathcal O_n$ induces isomorphisms on the first and second homotopy groups. In particular we have equivalences 
$$BO[2] \leftarrow BO(n)[2] \rightarrow BP\mathcal O_n[2] \leftarrow BP\mathcal O_n \rightarrow BP\mathcal O'_n \leftarrow BP\mathcal O(\ell^2)$$
that identify the $H$-space structure on $P\mathcal O(\ell^2)$, with the one on $BO[2]$ induced from direct sum of bundles. The claims of part iii) are well known for this space: The first and second Stiefel-Whitney classes give an equivalence $BO[2] \rightarrow K$ and the formula for the multiplication on $K$ is exactly the Whitney sum formula. The other equivalence is given by $(w_1, w_2+w_1^2)$ and it is readily checked to be a morphism of $H$-spaces, again by the Whitney sum formula. It is a similarly simple calculation that these are the only equivalences.
\end{proof}

Given this result it is possible to choose a classifying space $K$ for $P\mathcal O(\ell^2)$ that is a topological monoid as required: One may for example use the bar construction to explicitely obtain a product $K$ of two Eilenberg-Mac Lane spaces and equip it with a multiplication given by explicit models of addition and cup-products on bar constructions (compare \cite{Milgram}). Another possibility is to note that the $H$-space structure on $K$ refines to an $E_\infty$-structure, since it can be pulled back from the $H$-space structure on $BO[2]$ induced by the Whitney-sum structure (because of  iii)). One can therefore take $K$ to be the Moore loop space of $B(BO[2])$ to obtain another model for $BP\mathcal O(\ell^2)$ which is a monoid.

For our construction of a twisted $Spin$-cobordism spectrum, we shall now fix a choice of topological monoid $K$ as above. For any such, there two isomorphism classes of universal $P\mathcal O(\ell^2)$-bundle $EP\mathcal O(\ell^2) \rightarrow K$, and we need to fix the choice in order to fit with the conventions of Section 2. We do this as follows:
The spaces $EP\mathcal O'_n := EP\mathcal O(\ell^2) \times_{P\mathcal O(\ell^2)} P\mathcal O'_n$ are weakly contractible, so that they can serve as universal $P\mathcal O'_n$- and in particular universal $O(n)$-spaces using the homomorphism $j: O(n) \rightarrow P\mathcal O'_n$. This means that 
\[BO(n) := EP\mathcal O'_n / j(O(n))\]
is a choice of classifying space for $O(n)$, which we shall fix as the model from now on. It comes equipped with a map $w \colon BO(n) \rightarrow K$ (in fact a fibre bundle) by taking the quotient with respect to the action by the larger group $P\mathcal O'_n$. Depending on the choice of universal bundle there are two possibilities for the homotopy type of this map.

\begin{lem}\label{fibtho}
The map $w \colon BO(n) \rightarrow K$ either represents $(w_1,w_2)$ or $(w_1, w_2 + w_1^2)$ and post-composing with the non-trivial self-homotopy equivalence of $K$ (which is the same as choosing a universal bundle in the other equivalence class) interchanges these.
\end{lem}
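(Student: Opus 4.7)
The plan is to compute the pullbacks $(Bj)^*\iota_1 \in H^1(BO(n);\mathbb Z/2)$ and $(Bj)^*\iota_2 \in H^2(BO(n);\mathbb Z/2)$ of the two canonical generators of $H^*(K;\mathbb Z/2)$, where $j:O(n) \to \PGl_n \hookrightarrow \PGl$ is the composite defining the map in question. Since these pullbacks determine $Bj$ up to homotopy, it suffices to show they lie in $\{w_1\} \times \{w_2, w_2+w_1^2\}$ and that the non-trivial self-equivalence of $K$ interchanges the two possibilities for the second factor.

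For $\iota_1$: this class classifies the component map $\PGl \to \pi_0(\PGl) = \mathbb Z/2$. An element $g \in O(n)$ with $\det g = -1$ is, by the construction of $j$, sent to the class of $j'(\tilde g) \in \Gl_n$ for any odd lift $\tilde g \in Pin(n)$; since $j'(\tilde g)$ acts on the $\mathbb Z/2$-graded space $L_n = L^2(\mathbb R^n) \otimes Cl_n$ as a degree-reversing operator, $j(g)$ lies in the non-identity component of $\PGl$. Hence $(Bj)^*\iota_1 = w_1$.

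For $\iota_2$: by Kuiper's theorem each component of $\Gl$ is contractible, so $\Gl$ is weakly equivalent to $\mathbb Z/2$ as a topological group, and $B\Gl \to B\PGl$ is a principal $B\mathbb Z/2$-bundle classifying the central extension $\mathbb Z/2 \to \Gl \to \PGl$. Its classifying class generates a $\mathbb Z/2$-summand of $H^2(B\PGl;\mathbb Z/2)$ complementary to $\iota_1^2$, and it equals $\iota_2$ for one of the two identifications $B\PGl \simeq K$ from \ref{twimul}. The pulled-back extension is, by direct inspection of the definitions, the Pin extension $\mathbb Z/2 \to Pin(n) \to O(n)$ coming from the factorisation $O(n) = Pin(n)/\{\pm 1\} \xrightarrow{j'} \Gl_n/\{\pm id\} = \PGl_n$; its classifying class in $H^2(BO(n);\mathbb Z/2)$ is the classical obstruction to Pin structures, which is one of $w_2$ or $w_2 + w_1^2$.

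Combining, $(Bj)^*$ sends $(\iota_1, \iota_2)$ to either $(w_1, w_2)$ or $(w_1, w_2 + w_1^2)$. For the second assertion, by \ref{twimul} the non-trivial self-homotopy equivalence of $K$ acts on cohomology via $\iota_1 \mapsto \iota_1$ and $\iota_2 \mapsto \iota_2 + \iota_1^2$, and is induced by switching the universal bundle $E\PGl$ to one in the other equivalence class; post-composition with it therefore precisely swaps the two possibilities. The main obstacle is the identification of the pulled-back central extension with a standard Pin extension of $O(n)$ together with the recognition that its classifying class is one of $w_2, w_2 + w_1^2$; this relies on unpacking the formula $p \mapsto (f \otimes c \mapsto f \circ \rho(p)^{-1} \otimes p \cdot c)$ to see that the pullback of $\Gl_n \to \PGl_n$ along $j$ is exactly $Pin(n)$, after which the rest is formal.
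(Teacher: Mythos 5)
Your proof is correct but takes a genuinely different route from the paper's. The paper argues homotopy\hbox{-}theoretically: since the fibre of $BO(n) \to K$ is the $2$-connected space $\PGl_n/O(n) = BSpin(n)$, the long exact sequence of the fibration yields isomorphisms on $\pi_1$ and $\pi_2$, and the Hurewicz theorem (applied once to the map itself and once to the induced map on universal covers) pins the pulled-back classes down to $w_1$ in degree one and to something restricting to $w_2$ on $BSO(n)$ in degree two, which is exactly the allowed $\{w_2,\, w_2+w_1^2\}$ ambiguity. You instead compute the two pullbacks concretely: $\iota_1$ records the component map of $\PGl$, which restricts to the determinant on $O(n)$ because odd elements of $Pin(n)$ act with odd degree on $L_n$; and the classifying class of the central extension $\mathbb Z/2 \to \Gl \to \PGl$ pulls back to the classifying class of $\mathbb Z/2 \to Pin(n) \to O(n)$, i.e.\ the classical Pin obstruction, an element of $\{w_2,\, w_2+w_1^2\}$. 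What your route buys is a geometric reading that the paper's Hurewicz argument does not give: modulo $w_1^2$, the second component of $BO(n) \to K$ is literally the obstruction class to $Pin(n)$-structures.

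Two minor points of hygiene. First, there is no literal inclusion $\PGl_n \hookrightarrow \PGl$ in the paper's set-up ($\PGl = PO^{\pm}(\ell^2)$ acts on $\PGl'_n$, and the map $BO(n) \to B\PGl$ comes from an associated-bundle construction, not from functoriality of $B$ on a single group homomorphism); you can repair this either by fixing a Hilbert-space isomorphism $L_n \cong \ell^2$ (the resulting ambiguity in the homotopy class of $O(n)\to\PGl$ is precisely the self-equivalence the lemma allows), or by running the extension-pullback computation directly on the fibre sequence $BSpin(n) \to BO(n) \to K$. Second, you assert that the classifying class of $\Gl \to \PGl$ is complementary to $\iota_1^2$ before justifying it; this is in fact a consequence of your Pin-obstruction identification (if that class were $0$ or $\iota_1^2$, its pullback to $BO(n)$ would lie in $\{0, w_1^2\}$, contradicting that the Pin obstruction is not of that form), so the logic is sound once the steps are re-ordered.
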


\begin{proof}
Since $P\mathcal O_n/O(n)$ is a model for $BSpin(n)$
it follows from the long exact sequence of the fibre bundle $$P\mathcal O_n/O(n) \rightarrow BO(n) \rightarrow K$$ that the induced map $\pi_*(BO(n)) \rightarrow \pi_*(K)$ is an isomorphism in degrees one and two. Now the Hurewicz theorem applied to this (and the corresponding map of universal covers) gives the first claim. The second follows immediately from the explicit description of the non-trivial self-homotopy equivalence given in \ref{twimul}.
\end{proof}

Since such choices are unavoidable in the current context of parametrised spectra and twisted cohomology, we shall keep the monoid $K$, and a universal $P\mathcal O(\ell^2)$-bundle over it, such that the map $w$ above represents $(w_1, w_2)$ fixed throughout the remainder of this paper.

\begin{defi}
The parametrised spectrum $\MSpinK$ given by
$$(\MSpinK)_n = EP\mathcal O(\ell^2) \times_{P\mathcal O(\ell^2)} MSpin'_n$$
with structure maps and $O(n)$-actions induced from $MSpin'$ will be called the twisted $Spin$-cobordism spectrum.
\end{defi}

We define \emph{twisted $Spin$-cobordism} $\MSpin(-;-)$ to be the homology theory represented by this spectrum as explained in section \ref{paraspec}. Using proposition \ref{norepl} the Pontryagin-Thom construction can be used to give a geometric interpretation of these groups using manifolds and bordisms as cycles and relations. In section \ref{fund} we will provide such a description for the cycles.

Note furthermore, that the actions of $MSpin$ and $P\mathcal O(\ell^2)$ on $MSpin'$ explained in \ref{twispin} commute. The concatenation maps $P\mathcal O'_n \times P\mathcal O_m \rightarrow P\mathcal O_{n+m}'$ therefore provide a multiplication
$$\MSpinK \barwedge MSpin \longrightarrow \MSpinK$$
making twisted $Spin$-cobordism $\MSpin_*(-;-)$ into a module theory over (non-twisted) $Spin$-cobordism $\MSpin_*(-)$. 

\begin{obs}\label{MO}
The spaces $(\MSpinK)_n$ are canonically homeomorphic to the fibrewise Thom spaces along the map $BO(n) \rightarrow K$ of the universal bundles described above. In particular, we have that $\Theta(\MSpinK)$ is a model for $MO$.
\end{obs}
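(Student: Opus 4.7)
The plan is to unwind the Borel construction $E\PGl \times_\PGl MSpin(n)'$ and exhibit it directly as the fibrewise one-point compactification over $K$ of a vector bundle with classifying map to $K$. The key observation driving everything is that the $\PGl$-action on $MSpin(n)'$, induced from left multiplication on the $\ell^2$-factor of $\Gl'_n$, is fibrewise linear on the universal vector bundle $V$ over $BSpin(n)'$, because the $\ell^2$-factor plays no role in the defining fibre $\mathbb R^n$ of the standard representation of $Spin(n)$. In particular it preserves and fixes the section at infinity pointwise on each fibre.

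First I would identify $E\PGl \times_\PGl BSpin(n)'$ with $BO(n)$. As in Construction~\ref{twispin} one checks that $BSpin(n)' = \Gl'^+_n/Spin(n)$ coincides with $\PGl'_n/j(O(n))$: the even/odd decomposition of $Pin(n)$ under $j$ ensures that the odd component of $j(O(n))$ lies in $\PGl'^-_n$, so any $\PGl'_n$-coset has a representative in $\Gl'^+_n$, unique up to $Spin(n)$. Consequently
\[
E\PGl \times_\PGl BSpin(n)' \;=\; E\PGl \times_\PGl \PGl'_n / j(O(n)) \;=\; E\PGl'_n/j(O(n)),
\]
which is the primed variant of the model of $BO(n)$ fixed before Lemma~\ref{fibtho}, together with its canonical map to $K = E\PGl/\PGl$ by further collapsing $\PGl'_n$. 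Under this identification the vector bundle $E\PGl \times_\PGl V$ is manifestly the universal $O(n)$-bundle, since its fibre $\mathbb R^n$ inherits its $O(n)$-action through $j$.

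Second I would check that the Borel construction and the fibrewise one-point compactification commute in the required way. Writing $MSpin(n)' = V^+$ as the fibrewise one-point compactification with the single basepoint $\infty_{BSpin(n)'}$ collapsed, the $\PGl$-action restricts to the identity on each fibrewise $\infty$-point and permutes the section at infinity only through its action on $BSpin(n)'$. In the Borel construction the orbits on this section are therefore parametrised by $E\PGl/\PGl = K$, while on its complement one simply recovers the total space of $E\PGl \times_\PGl V$. This is by definition the fibrewise Thom space $M_KO(n)$ along $BO(n)\to K$.

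Finally, for $\Theta(M_2O) = MO$: the functor $\Theta$ collapses the base-section $K \subset M_2O_n = M_KO(n)$ to a single point, which identifies all sections at infinity of the universal $O(n)$-bundle over $BO(n)$ to one basepoint, yielding the ordinary Thom space $MO(n)$; the structure maps and $O(n)$-actions inherited from those of $MSpin'$ agree on the nose with the familiar ones of $MO$. The main piece of bookkeeping that needs care is the identification $\Gl'^+_n/Spin(n) = \PGl'_n/j(O(n))$ — everything else is a direct verification once the $\PGl$-equivariance of the vector bundle structure on $V$ has been noted.
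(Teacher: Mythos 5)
Your proof is correct and is the natural unwinding of the definitions. The paper states this as an \emph{Observation} without supplying a proof, so there is no argument to compare against; your three-step plan (linearity and section-preservation of the $\PGl$-action on $V$, identification $E\PGl \times_\PGl BSpin(n)' = BO(n)$, commutation of the Borel construction with the collapse of the $\infty$-section) is exactly what justifies the ``on the nose'' claim, and your observation that $\Theta$ then merely collapses the residual copy of $K$ to recover $MO(n)$ finishes it.

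One small piece of imprecision worth tightening: your justification of $\Gl'^+_n/Spin(n) = \PGl'_n/j(O(n))$ speaks of ``$\PGl'_n$-cosets having representatives in $\Gl'^+_n$,'' but $\Gl'^+_n$ is not a subset of $\PGl'_n$. The cleaner route is to note $\PGl'_n/j(O(n)) = \bigl(\Gl'_n/\{\pm 1\}\bigr)/\bigl(Pin(n)/\{\pm 1\}\bigr) = \Gl'_n/j(Pin(n))$, and then every $j(Pin(n))$-coset in $\Gl'_n$ has an even representative (since $Pin(n)$ contains odd elements), unique up to $j(Pin(n)) \cap \Gl'^+_n \cong Spin(n)$, giving $\Gl'_n/j(Pin(n)) = \Gl'^+_n/Spin(n)$. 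Also, the section at infinity inside $V^+$ is a copy of $BSpin(n)'$ carrying a nontrivial $\PGl$-action, whose Borel construction is $BO(n)$, not $K$; it is only after the collapse defining $MSpin(n)'$ (where the whole section is a single basepoint, now carrying the trivial action) that the Borel construction contributes $K$. You clearly understand this, but the phrase ``orbits on this section are parametrised by $K$'' blurs the two stages and could mislead a reader.
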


\begin{rem}\label{honmul}
Choosing an isomorphism $\mu:\ell^2 \rightarrow \ell^2 \otimes \ell^2$ one obtains a map $P{\mathcal O}(\ell^2) \times P{\mathcal O}(\ell^2) \rightarrow P{\mathcal O}(\ell^2)$ and we can ask for an equivariant bundle map  
$$\begin{xy}\xymatrix@-1pc{EP{\mathcal O}(\ell^2) \times EP{\mathcal O}(\ell^2) \ar[r]\ar[d] & EP{\mathcal O}(\ell^2) \ar[d] \\
                      K  \times K \ar[r]                                                              & K
}\end{xy}$$ 
covering the multiplication of $K$. Since $EP{\mathcal O}(\ell^2) \rightarrow K$ is universal, the space of these bundle maps were contractible if we left the multiplication map of $K$ variable (within its homotopy class) as well. The set-up of May and Sigur\dh{}sson \cite{MaSi}, however, does not allow the multiplication on $K$ to vary, see also Remark \ref{remark}. If we fix the multiplication, there is more than a single fibre homotopy class of such maps. While every one of them induces a multiplication $\MSpinK \barwedge \MSpinK \rightarrow \MSpinK$, none of them will be fibre homotopy associative. The bundle maps still produce a single homotopy class of maps $$\Theta(\MSpinK) \wedge \Theta(\MSpinK) \rightarrow \Theta(\MSpinK)$$ making $\Theta(\MSpinK)$ an orthogonal spectrum with only a homotopy ring structure, despite the fact that $\Theta(\MSpinK) \simeq MO$, which can easily be made into an honest orthogonal ring spectrum.
\end{rem}

Finally, in order to make the connection to the first part of this paper we need the following technical proposition.

\begin{prop}\label{norepl}
Let $\zeta: X \rightarrow K$ be a space over $K$, where $X$ is a cell complex. Then there is a canonical isomorphism
$$\MSpin_n(X;\zeta) \cong \pi_n(\Theta(\MSpinK \wedge_K (X \sqcup K)))$$
\end{prop}

Recall that the left hand side is defined homotopically by deriving all functors that occur on the right. The point of the proposition is that no cofibrant or fibrant resolution has to take place in order to correctly calculate the twisted cobordism groups. 

\begin{proof}[Proof of proposition \ref{norepl}]
The statement of the proposition is entirely internal to the model categorical aspects of \cite{MaSi}. We will therefore freely use their language in this proof. We have to construct a zig-zag of stable equivalences between
$$\Theta(\MSpinK \wedge_K (X \sqcup K))$$
and
$$\Theta c\Delta^* f_s(c\MSpinK \barwedge c(X \sqcup K))$$
where $c$ and $f_s$ denote cofibrant and fibrant resolutions in the stable model structure. Note that cofibrations in the level and stable model structure agree, in particular, cofibrant resolution can be achieved by a level equivalence.

The external smash product preserves homotopy equivalences 
between well-sectioned spaces (by the same proof as \cite[Proposition 8.2.6]{MaSi}) and all spaces in sight have the homotopy type of cell complexes (see the exposition in \cite[Section 9.1]{MaSi} for the resolved objects). Therefore we have a levelwise homotopy equivalence
$$c \MSpinK \barwedge c (X \sqcup K) \rightarrow \MSpinK \barwedge (X \sqcup K).$$
Now consider the composition
$$f_l(c \MSpinK \barwedge c (X \sqcup K)) \rightarrow f_l(\MSpinK \barwedge (X \sqcup K)) \rightarrow \MSpinK \barwedge (LX \sqcup K),$$
where $L$ is the Moore-mapping-path-space functor and $f_l$ is a level fibrant replacement. The map on the right exists since $LX \sqcup K$ and $(\MSpinK)_n$ are ex-fibrations, as is their smash product by 
\cite[Proposition 8.2.3]{MaSi}, and spectra consisting of ex-fibrations are level-fibrant. Now $\Delta^*$ preserves stable equivalences between level fibrant objects (since stable equivalences can then be tested on actual fibres) and as a right Quillen functor also level equivalences between level fibrant objects. Therefore
$$\Delta^*f_s(c \MSpinK \barwedge c (X \sqcup K)) \leftarrow \Delta^*f_l(c \MSpinK \barwedge c (X \sqcup K)) \rightarrow \MSpinK \wedge_K (LX \sqcup K)$$
consists of a stable and a level equivalence, respectively. Now all three terms are well-sectioned: For the right this follows by inspection and for the left two it follows since cofibrant spaces are well-sectioned by 
\cite[Theorem 6.2.6]{MaSi}, and $\Delta^*$ preserves the property of being well-sectioned by \cite[Proposition 8.2.2]{MaSi}. Since $\Theta$ preserves level equivalences between well-sectioned ex-spaces and stable equivalences between cofibrant objects (since it is a left Quillen functor) we obtain stable equivalences
\begin{multline*}
\Theta c\Delta^* f_s(c\MSpinK \barwedge c(X \sqcup K)\leftarrow \Theta c\Delta^* f_l(c\MSpinK \barwedge c(X \sqcup K) \\
\rightarrow \Theta(\MSpinK \wedge_K (LX \sqcup K))
\end{multline*}
The right hand side here is the Thom spectrum associated to the pullback of
$$\begin{xy}\xymatrix@-1pc{   & BO(i) \ar[d] \\
                       LX \ar[r]       & K}\end{xy}$$
Since the canonical map $X \rightarrow LX$ is a homotopy equivalence and the map $BO(i) \rightarrow K$ a fibration, this pullback is homotopy equivalent to that of
$$\begin{xy}\xymatrix@-1pc{  & BO(i) \ar[d] \\
                       X \ar[r]       & K}\end{xy}$$
which concludes the proof.
\end{proof}

Recall from Section \ref{choice-of-bordism-gropus} how an almost spinnable manifold $M$ with fundamental group $\pi$ admits a map $u \colon  B\pi \rightarrow K$, that lifts $(w_1(M),w_2(M)+w_1(M)^2)$. Taken together, Proposition \ref{twicob} and Proposition \ref{norepl} yield

\begin{cor}\label{mind}
For $B(\pi,u)$ the normal $1$-type of an almost spinnable manifold, we have
$$\Omega^{(\pi,u)}_* \cong \MSpin_*(B\pi;u).$$
\end{cor}

\begin{rem}
A similar (but far easier) construction can be used to show that for the normal $1$-type of a totally non-spinnable manifold we have an isomorphism
$$\Omega^{(\pi,u)}_* \cong \MSO_*(B\pi;u),$$
where the right hand side stands for a twisted oriented cobordism groups. The results we are ultimately trying to show, however, are well-known for totally non-spinnable manifolds.
\end{rem}

\subsection{A model for twisted $K$-theory.}\label{twik}

We again use a slight variation of the spectra appearing in \cite[Chapter 6]{Jo2}, introducing a version which trades the nice multiplication for an action by $P{\mathcal O}(\ell^2)$. \\

\begin{con}\label{constructing-KO}
Consider the spaces $KO_n = Hom(C_0(\mathbb R), K(L_n))$, where $C_0(\mathbb R)$ denotes the algebra of real valued functions on the real line vanishing at infinity, which we regard as a ${\mathbb Z}/2$-graded $C^*$-algebra using its decomposition into even and odd functions, and where $K$ denotes the ${\mathbb Z}/2$-graded $C^*$-algebra of right-Clifford-linear compact operators, while $Hom$ denotes the space of degree preserving $C^*$-homomorphisms, pointed by the null map. It is shown in \cite{Jo2}, that these spaces represent $KO$-theory. We shall turn this sequence into an orthogonal spectrum just as before:
The orthogonal group acts on $KO_n$ by conjugation using the same map $i\colon O(n) \rightarrow {\mathcal O}^{ev}(L_n)$, i.e.~the action is explicitly given by
$$O(n) \times KO_n \longrightarrow KO_n \quad (q, \varphi) \longmapsto \big\{f \longmapsto i(q) \circ \varphi(f) \circ i(q)^{-1}\big\}$$
This again preserves the Clifford-linearity of the operator and the discussion after the definition of the $O(n)$-action on $MSpin_n$ applies as well. Indeed, it will be important later that this action factors via $j$ over the action of $P\mathcal O_n$ on $KO_n$ given by the following formula
$$([U],\varphi) \mapsto \big\{f \longmapsto (-1)^{|U||f|} U \circ \varphi(f) \circ U^{-1}\big\}$$

The structure maps will arise via the unit of a commutative multiplication on $KO$. These multiplications arise from the coproduct $\Delta$ on $C_0(\mathbb R)$ and the isomorphisms $K(L_n) \otimes K(L_m) \rightarrow K(L_{n+m})$ and $K(L_n) \otimes K(L'_m) \rightarrow K(L'_{n+m})$ induced by the isomorphisms $L_n \otimes L_m \rightarrow L_{n+m}$ (compare \cite[p.~94]{Jo2}); for the definition of the coproduct on $C_0(\mathbb R)$ e.g.~see \cite{Ha2}. Explicitly the multiplication maps for $KO$ are given by the maps
$$Hom(C_0(\mathbb R), K(L_n)) \wedge Hom(C_0(\mathbb R), K(L_m)) \longrightarrow Hom(C_0(\mathbb R), K(L_{n+m}))$$
which map $(\varphi, \psi)$ to 
$$\xymatrix{C_0(\mathbb R) \ar[r]^-\Delta & C_0(\mathbb R) \otimes C_0(\mathbb R) \ar[r]^-{\varphi \otimes \psi} & K(L_n) \otimes K(L_m) \ar[r]^-{\widehat \otimes}_-\cong & K(L_{n+m})}$$
where the final map induces a plethora of signs, just like the concatenation operation on the $\mathcal O_n$'s. Unwinding these definitions immediately gives that the product maps are $O(n) \times O(m)$-equivariant. The unit $u: S \to KO$ for the product on $KO$ is given by extending 
$$u_n \colon \mathbb R^n \rightarrow KO_n, \quad w \mapsto \big\{f \mapsto p_n \widehat\otimes f(w) \cdot - \big\}$$
by zero, where $p_n$ denotes the projection operator of $L^2(\mathbb R^n)$ onto the (even!) subspace generated by the function $v \mapsto \exp(-|v|^2)$, and $f(w) \cdot -$ denotes left multiplication by $f(w)$, while $f(w)$ is given by functional calculus. Note that elements $u\in {\mathbb R}^n$ of norm $1$ are odd, unitary and satisfy $u^2=1$. Hence they are all odd and selfadjoint, and therefore the same applies to all vectors $w\in {\mathbb R}^n$. In particular one can apply functional calculus to all vectors $w\in {\mathbb R}^n$. The unit is $O(n)$-equivariant because $Cl_q(f(w)) = f(q(w))$ for all $f \in C_0(\mathbb R), w \in \mathbb R^n, q \in O(n)$ and the subspace generated by the function $v \mapsto \exp(-|v|^2)$ is $O(n)$-invariant. Altogether one obtains a commutative orthogonal ring spectrum $KO$. \\
Just as before we shall need a modified version of $KO$ to produce twisted $KO$-theory. To this end let $KO'_n = Hom(C_0(\mathbb R), K(L'_n))$ and let us mimick the constructions above to obtain an action $KO'_n \wedge KO_m \rightarrow KO'_{n+m}$, which turns $KO'$ into an $KO$-module spectrum. We can also construct a map $KO \rightarrow KO'$, but in contrast to the situation for the $Spin$-spectra the map depends upon an arbitrary choice of an (even) rank $1$ projection operator $p$ on $\ell^2$\label{fixing-the-projector}, that we fix once and for all. It is explicitly given by sending a homomorphism $\varphi \in Hom(C_0(\mathbb R), K(L_n))$ to the homomorphism
$$\varphi':C_0(\mathbb R) \rightarrow K(L'_n) \text{, given by } f \longmapsto p \widehat\otimes \varphi(f)$$
Note that the space of rank $1$ operators is path-connected so the homotopy class of this equivalence is uniquely determined; it is a weak equivalence, since the map on compact operators $K(H) \rightarrow K(H' \otimes H)$ given by tensoring an operator with a rank $1$ projection on $H'$ is a homotopy equivalence of $C^*$-algebras, being homotopic to the isomorphism arising from any identification $H \cong H' \otimes H$, see e.g. \cite{Me}.
The spectrum $KO'$ carries an action by $P\mathcal O(\ell^2)$. It is induced by conjugating a homomorphism $\varphi: C_0(\mathbb R) \rightarrow KO'_n$ with elements $U \in \mathcal O(\ell^2)$, i.e.~ the action is given explicitly by
$$f \longmapsto (-1)^{|U||f|} (U \otimes \id_{L_n}) \circ \varphi(f) \circ (U^{-1} \otimes \id_{L_n})$$
for which in particular the $KO$-action maps are equivariant and thus turn $KO'$ into a $P\mathcal O(\ell^2)$-object in $KO$-module spectra.
\end{con}

From this data we produce a parametrised spectrum just as above.

\begin{defi}
The spectrum $\KOK$ given by
$$(\KOK)_n = EP{\mathcal O}(\ell^2) \times_{P{\mathcal O}(\ell^2)} KO'_n$$
with structure maps and $O(n)$-actions induced form $KO'$ we call the twisted, real $K$-theory spectrum.
\end{defi}

As in the case of the twisted $Spin$-spectrum $\MSpinK$, this spectrum $\KOK$ is a module spectrum over $KO$ and the spectrum $\Theta(\KOK)$ inherits a homotopy multiplication. 

\begin{rem}\label{agg}
In the appendix of \cite{AGG} the authors claim to construct an action of $P{\mathcal O}(\ell^2)$ on the model of the $KO$-spectrum from \cite{Jo} through maps of ring spectra avoiding the passage to a `free rank one module' that we went through above. Closer inspection, however, shows that their action does not leave the unit map invariant and thus does not commute with the structure maps of the spectrum invalidating \cite[Proposition 4.1]{AGG}. In the third appendix, on the way to our direct comparison between the homotopical definition of twisted $KO$-theory along the lines of \cite{AnBlGeHoRe} and our geometric one above, we will explain how our models fit in with their framework.
\end{rem}

\section{Fundamental classes and orientations}\label{fco}

Having constructed the spectra relevant to our work we now turn to the fundamental map relating them, the twisted Atiyah-Bott-Shapiro orientation. We will explain how this transformation of twisted homology theories gives rise to Thom classes and fundamental classes for arbitrary bundles and manifolds, respectively, in twisted $KO$-theory and its connective covers. We shall also discuss the additional choices one has to make to specify these classes. Most of the material of this section can again be safely skipped by any reader familiar with twisted generalised homology. Exceptions are the definition of the twisted Atiyah-Bott-Shapiro orientation in \ref{twisted-ABS-orientation:construction} and the final statement \ref{ourconj}, which is Stolz' conjecture about the existence of metrics of positive scalar curvature that we ended the introduction with and which we now can finally state formally.

\subsection{The twisted Atiyah-Bott-Shapiro orientation.} \label{twisted-ABS-orientation:construction}
Recall that $Spin$-bundles have Thom classes in real $K$-theory by the work of Atiyah, Bott and Shapiro. This gives rise to a (multiplicative) transformation of homology theories $\Omega_*^{Spin} \rightarrow KO_*$, represented by some map of spectra $MSpin \rightarrow KO$. In \cite{Jo2} the second author constructed such a (ring-)map explicitly using the spectra described above (implying in particular that this orientation is an $E_\infty$-map). We will modify his construction slightly to obtain a map $\MSpinK \rightarrow \KOK$, which will provide us with Thom classes in twisted $K$-theory. 

\begin{con}
The construction begins with the following observation from the construction of $KO$: The $O(n)$ action on $KO_n$ factors through $j: O(n) \rightarrow P\mathcal O_n$, where the $P{\mathcal O}_n$ action on $KO_n$ is given by
$$P{\mathcal O}_n \times KO_n \rightarrow KO_n, \quad (U,\varphi) \mapsto \big\{f \longmapsto (-1)^{|U||f|} U \circ \varphi(f) \circ U^{-1}\big\}$$
We then can use the $O(n)$-equivariance of the maps $u_n: S^n \to KO_n$, which provide the unit of $KO$,  to set $\alpha_n$ to be the composite
\begin{align*} 
&MSpin_n = P\mathcal O_{n+} \wedge_{O(n)} S^n \\
             &\quad \quad \quad\stackrel{id \wedge u_n}{\longrightarrow}P\mathcal O_{n+} \wedge_{O(n)} KO_n\\
             &\quad \quad \quad \quad\quad\quad \longrightarrow P\mathcal O_{n+} \wedge_{P \mathcal O_n} KO_n \stackrel{\cong}{\longrightarrow} KO_n
\end{align*}
which is $O(n)$ invariant by the second description of the action on $MSpin_n$.
We similarly have a map $\alpha': MSpin' \rightarrow KO'$, depending on a rank $1$ projector $p$ in $\ell^2$ (see \ref{fixing-the-projector} for the role of $p$ in the definition of $KO'$).
\begin{align*}
&MSpin'_n = P\mathcal O'_{n+} \wedge_{O(n)} S^n \\
             &\quad \quad \quad\stackrel{id \wedge u_n}{\longrightarrow} P\mathcal O'_{n+} \wedge_{O(n)} KO_n\\
				     &\quad \quad \quad\quad \quad \quad\longrightarrow P\mathcal O'_{n+} \wedge_{O(n)} KO'_n\\
             &\quad \quad \quad\quad \quad \quad \quad\quad\quad \longrightarrow P\mathcal O'_{n+} \wedge_{P \mathcal O'_n} KO'_n  \stackrel{\cong}{\longrightarrow} KO'_n
\end{align*}
 These maps make the following diagram commute
$$\begin{xy}\xymatrix@-1pc{ MSpin \ar[r]^\alpha \ar[d] & KO \ar[d] \\
                       MSpin' \ar[r]^{\alpha'}    & KO'} \end{xy}$$
where both maps to $KO'$ come from the same choice of $p$. 
\end{con}

\begin{thm}[Theorem 6.9 \cite{Jo2}]\label{mictho}
The map $\alpha: MSpin \rightarrow KO$ induces the $KO$-theory Thom classes for $Spin$-bundles of Atiyah, Bott and Shapiro. The same is therefore true for the map $\alpha': MSpin' \rightarrow KO'$.
\end{thm}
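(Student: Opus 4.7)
The proof has two parts, corresponding to the two sentences of the statement. The first claim, about $\alpha$, is Theorem 6.9 of \cite{Jo2}, and one would invoke that result directly. For the reader's benefit let me indicate the strategy. One fixes the universal $Spin$-bundle over $BSpin(n)$ and computes the fibrewise restriction of $\alpha$ at a basepoint. Unwinding the definition, this restriction sends $v \in S^n$ to the $C_0(\mathbb{R})$-homomorphism $f \mapsto p_n \otimes f(v) \cdot (-) \in KO(n)$, where $f(v)$ denotes functional calculus applied to $v \in Cl_n$ (which is self-adjoint with spectrum contained in $\{\pm|v|\}$). Inspection shows that this is precisely the classical Atiyah-Bott-Shapiro representative of the Thom class built from the spinor bundle, here modelled by $L^2(\mathbb{R}^n)\otimes Cl_n$. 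Since the ABS Thom class is determined by its fibrewise behaviour and the $Spin(n)$-equivariance of $\alpha$ extends the fibrewise identification to the universal class, the first claim follows.

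The second assertion, for the primed version $\alpha'$, is then immediate from the commutative square exhibited in the construction: both vertical arrows $MSpin \to MSpin'$ and $KO \to KO'$ are weak equivalences, so $\alpha'$ is identified with $\alpha$ under these equivalences. The ABS Thom class for a $Spin$-bundle is by definition transported correspondingly from the $KO$-model to the $KO'$-model, which gives the statement for $\alpha'$.

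The main obstacle is confined to the first part, and is essentially bookkeeping. Several sign and degree conventions must be tracked simultaneously: the $\mathbb{Z}/2$-grading on $L_n$; the two distinct maps $i,j\colon O(n) \to \Gl_n, \PGl_n$, neither of which sends $-\mathrm{id}$ to $-\mathrm{id}$; the degree-twisted conjugation action of $\PGl$ on $KO'$; and the crucial identity $Cl_{\rho(g)}(c) = (-1)^{|g|} g c g^{-1}$ for $g \in Pin(n)$, which is exactly what underpins the equivariance of $\alpha$ noted in the construction. Matching all of these so that the class produced is exactly the ABS class (rather than, say, its negative or a variant twisted by a different convention on $Spin$-structures) is precisely the content of \cite[Theorem 6.9]{Jo2}, to which one defers for the detailed verification.
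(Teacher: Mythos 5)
Your proposal is correct and matches the paper's treatment, which for this statement consists precisely of the citation to \cite[Theorem 6.9]{Jo2} for the unprimed case, together with the observation (implicit in the word ``therefore'') that the commutative square from the construction, whose vertical arrows are weak equivalences, transports the conclusion to $\alpha'$. Your sketch of the fibrewise verification and the bookkeeping of sign/grading conventions is accurate as an indication of what the cited proof contains, though of course that content resides in \cite{Jo2} and is not reproduced here.
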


The crucial observation is that $\alpha'$ is equivariant under the action of $P{\mathcal O}(\ell^2)$, hence induces a map of fibre bundles:

\begin{defi}
We call the map $\alpha_K: \MSpinK \rightarrow \KOK$ given by 
$$\mathrm{id} \times \alpha': EP{\mathcal O}(\ell^2) \times_{P{\mathcal O}(\ell^2)} MSpin(n)' \to EP{\mathcal O}(\ell^2) \times_{P{\mathcal O}(\ell^2)} KO'_n$$
 the \emph{twisted Atiyah-Bott-Shapiro-orientation}.
\end{defi}

\begin{rem}
We remark that such a map can easily be constructed along the lines of \cite{AnBlGeHoRe}. By considering the fibre sequence $K(\mathbb Z/2,0) \times K(\mathbb Z/2,1) \rightarrow BSpin \rightarrow BO$ and studying the induced map of Thom spectra as we do in appendix C one obtains a map $K(\mathbb Z/2,1) \times K(\mathbb Z/2,2) \rightarrow BGl_1(MSpin)$. The $E_\infty$ ring map $MSpin \rightarrow KO$ induces a map $BGl_1(MSpin) \rightarrow BGl_1(KO)$ from which one obtains a map $MSpin_K \longrightarrow KO_K$, if the homotopical definition (of loc. cit.) for the parametrised spectra are used. However, from this description it is not clear how to get at the geometric content of the induced transformation, which is very relevant to our work. We show both constructions to agree in appendix C.
\end{rem}

Let us now explain how the twisted Atiyah-Bott-Shapiro orientation gives rise to Thom classes. Recall that in \ref{MO} we have identified $\MSpinK(n)$ with the fibrewise Thom space of the universal vector bundle $\gamma_n$ over $BO(n)$ along a fibration $BO(n) \rightarrow K$ representing the first and second Stiefel-Whitney class. Given now an $n$-dimensional bundle $V \rightarrow B$ together with a commutative diagram
$$\xymatrix@-1pc{ V \ar[d]_p \ar[rr]  &   & \gamma_n \ar[d]  \\
             B \ar[rd]_{w(V)} \ar[rr] &   & BO(n) \ar[ld]^w \\
                                 & K &               }$$
we obtain a canonical fibre homotopy class $(DV,SV) \rightarrow (\MSpinK(n),K)$ over $K$. 
Using the canonical map $$[X,E_n]_K \to [\mathbb S_K \wedge_K (X \sqcup K),\mathbb S_K^n \wedge_K E]_K = \pi_{-n}\Gamma \big( F_K(X \sqcup K, E)\big) = E^n(X,\zeta)$$
for a space $\zeta: X \to K$ and a parametrised spectrum $E$ and taking the fibrewise quotient by the sphere bundle, we obtain classes $$t \in \MSpin^n(DV, SV;p^*w(V)) \quad \text{and} \quad {\alpha}_K(t) \in KO^n(DV, SV;p^*w(V)).$$ 
Just as in the classical situation Thom classes are defined as those classes
$t \in E^n(DV,SV;p^*w)$ pulling back to generators under the restriction maps
$$E^n(DV,SV;p^*w) \rightarrow E^n(DV_{|b},SV_{|b};w(V)_{|b})$$ for all $b\in B$. 
\begin{prop}
The classes $t$ and ${\alpha}_K(t)$ are Thom classes for $V$ with respect to the pairings 
\[\MSpin_*(-;-) \otimes \MSpin_*(-) \longrightarrow \MSpin_*(-;-)\] 
\[KO_*(-;-) \otimes KO_*(-) \longrightarrow KO_*(-;-)\]
coming from the module structures of twisted $Spin$-cobordism and $KO$-theory over their untwisted versions, respectively.\\ 
Therefore, any bundle $V \stackrel{p}{\rightarrow} B$ together with a representing map $B \stackrel{w(V)}{\rightarrow} K$ of its first and second Stiefel-Whitney class admits a Thom class in both twisted $Spin$-cobordism and twisted $KO$-theory.
\end{prop}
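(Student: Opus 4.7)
The plan is to verify the defining Thom-class condition fibrewise at each $b \in B$ and thereby reduce to the classical Atiyah-Bott-Shapiro Thom isomorphism. Concretely I would show that the pairing map
$$\Omega_{Spin}^l(\mathrm{pt}) \longrightarrow \Omega_K^{l+n}(V_b, V_b - 0, p^*w|_{V_b}), \quad \lambda \longmapsto \lambda \cdot t|_{V_b}$$
(and its $KO$-analogue) is an isomorphism for every $b \in B$. The crucial observation is that $p^*w|_{V_b}$ is a constant map to $w(b) \in K$. Since $(M_2O)_n = E\PGl \times_{\PGl} MSpin'(n)$ is an associated bundle of the universal principal $\PGl$-bundle $E\PGl \to K$, the homotopy fibre of $M_2O$ over any point of $K$ is canonically $MSpin'$, which is weakly equivalent to $MSpin$ by the construction in \ref{twispin}; the $KO$-case is analogous. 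Consequently the right-hand side of the displayed pairing is canonically isomorphic to the classical $\Omega_{Spin}^{l+n}(D^n, S^{n-1}) \cong \Omega_{Spin}^l(\mathrm{pt})$, and the pairing itself becomes the classical module pairing of $\Omega^{Spin}_*$ on itself.

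It then remains to identify $t|_{V_b}$, under these identifications, with the classical $MSpin$ Thom class of the trivial $n$-plane bundle. Unwinding the construction preceding the proposition, one finds that on the fibre $V_b \cong \mathbb R^n$ the map $(DV, SV) \to (M_2O(n), K)$ becomes the canonical map $(D^n, S^{n-1}) \to (MSpin'(n), *)$ arising from the trivial $Spin$-structure on $\mathbb R^n$; this represents the unit in $\pi_0 MSpin$ and is well-known to be the Thom class of the trivial bundle. For $\hat\alpha(t)$ the same argument applies; Theorem \ref{mictho} then identifies $\hat\alpha(t)|_{V_b}$, which under the fibre identifications becomes $\alpha(t|_{V_b})$, with the classical Atiyah-Bott-Shapiro $KO$-theory Thom class of $\mathbb R^n$.

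The main obstacle is pure bookkeeping of the various identifications in the parametrised setting---the fibre identifications and the weak equivalences $MSpin \to MSpin'$ and $KO \to KO'$ mediating between primed and unprimed spectra. Granted these, the argument is essentially the classical Thom isomorphism applied fibrewise. The concluding sentence of the proposition follows at once: any $n$-dimensional vector bundle $V \to B$ admits a classifying map $B \to BO(n)$, and by Lemma \ref{fibtho} its composition with $BO(n) \to K$ represents $(w_1, w_2)$, so for any $w: B \to K$ representing these classes the diagram in question can be completed and our construction produces the desired Thom classes in both twisted $Spin$-cobordism and twisted $KO$-theory.
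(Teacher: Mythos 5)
Your proposal is correct and follows essentially the same route as the paper: verify the Thom-class condition fibrewise, identify the fibre spectrum with $MSpin'$ (hence $MSpin$), reduce to the classical Thom class, and invoke Theorem \ref{mictho} for the $KO$-case. One small inaccuracy: you call the identification of the fibre with $MSpin'$ ``canonical,'' but it depends on a choice of point in the fibre of $E\PGl$ over $w(b)$ (the paper stresses this non-canonicity, since that choice is exactly what supplies both the multiplicativity of the identification and the $Spin$-structure on $V_b$); this is, however, absorbed into the ``bookkeeping'' you flag.
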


\begin{proof}
Since being a Thom class is a pointwise condition the proposition follows immediately from the standard construction of Thom classes in bordism theories (which the above reduces to in each fibre) and theorem \ref{mictho}.
\end{proof}

\begin{rem}
While Thom classes in general are not canonical but depend on the vertical homotopy class of a lifting $(DV,SV) \to (\MSpinK(n),K)$ over $w(V)$, we do have a canonical Thom classes in $\MSpin^n(\MSpinK(n),K;w)$ for the universal bundles in the present context: They are represented via identity maps.
\end{rem}

The fact that there is a Thom isomorphism attached to a Thom class in twisted homology and cohomology follows from the existence of a Serre spectral sequence for twisted homology and cohomology \cite[20.4.1]{MaSi} just as in the untwisted case. In the terminology of loc.~cit.~$X$ is the fibrewise one-point compactification $V^+$ of $V$, $J = w^*E$, and one has to invest the composition of isomorphisms 
$$(w(V)^*E)_*(V^+) \cong E^*(w(V)_!(V^+)) \cong E^*(DV,SV;p^*(w(V)))$$
where the first isomorphism is given by \cite[20.2.6]{MaSi}, and the second is just a change of notation. In particular we find the following.
\begin{prop}
There are isomorphisms 
$$KO^{k+n}(\MSpinK(n),K;w) \cong KO^{k+n}(D\gamma_n,S\gamma_n;w) \cong KO^k(BO(n))$$
given by excision and multiplication with the canonical Thom class just mentioned.
\end{prop}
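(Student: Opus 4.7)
The plan is to establish the two isomorphisms separately and then compose.

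For the second isomorphism, I would directly invoke the Thom isomorphism theorem proved in Section 3.1, applied to the universal rank-$n$ bundle $p\colon EO(n)\to BO(n)$ with base twist $w\colon BO(n)\to K$ representing $(w_1,w_2)$, trivial auxiliary twist $\eta=0$, and the canonical Thom class $\hat\alpha(t)\in KO^n(EO(n),EO(n)-0,p^*w)$ from the previous proposition. Since $KO$-theory is additive with compact supports and $BO(n)$ admits a countable atlas for its universal bundle, the appropriate version of the theorem produces the isomorphism
\[KO^k(BO(n))\xrightarrow{\ p^*(-)\cup\hat\alpha(t)\ }KO^{k+n}(EO(n),EO(n)-0,p^*w).\]

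For the first isomorphism, I would argue by excision in the guise of the Mayer--Vietoris axiom for twisted $KO$-theory. By construction $M_2O(n)$ is the fibrewise Thom space of $EO(n)\to BO(n)$ along $w$; equivalently it is the pushout over $K$ of the span $DEO(n)\hookleftarrow SEO(n)\to K$, with $K\hookrightarrow M_2O(n)$ the section at infinity. Using fibrewise radial coordinates (zero section at radius $0$, section at infinity $K$ at radius $\infty$), let $V$ be the subset of points of radius $<2/3$ and $U$ the subset of points of radius $>1/2$; then $V\simeq DEO(n)$, $U\simeq K$, the intersection $U\cap V$ deformation retracts onto $SEO(n)$, and $U\cup V=M_2O(n)$. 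Applying the Mayer--Vietoris axiom with $A=V$, $B=U$, $C=\emptyset$, $D=K$, together with the vanishing of $KO^*(U,K,w)$, produces an exact sequence
\[\cdots\to KO^{*-1}(SEO(n),w)\to KO^*(M_2O(n),K,w)\to KO^*(DEO(n),w)\to KO^*(SEO(n),w)\to\cdots\]
which, by naturality of Mayer--Vietoris under the quotient map of pairs $q\colon(DEO(n),SEO(n))\to(M_2O(n),K)$, is compared term by term with the long exact sequence of $(DEO(n),SEO(n))$. The five lemma yields the desired iso, and a radial expansion produces the pair-homotopy equivalence $(DEO(n),SEO(n))\simeq(EO(n),EO(n)-0)$ to finish.

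The only genuine work is the bookkeeping of twists and connecting maps: one must check that the twist $w\colon M_2O(n)\to K$ restricts on the included disk bundle to the pullback $p^*w$, which is immediate from the fibrewise Thom space construction since the factorisation $DEO(n)\to M_2O(n)\to K$ coincides with $DEO(n)\to BO(n)\to K$, and that the Mayer--Vietoris boundary map matches, under $q$, the boundary of the pair $(DEO(n),SEO(n))$. The latter is a naturality check that carries over unchanged from the classical (untwisted) case. Modulo these routine verifications, both isomorphisms fall out.
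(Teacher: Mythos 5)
Your argument is correct and fills in precisely what the paper leaves implicit; the paper offers no separate proof of this proposition, simply recording it as an immediate consequence of the preceding Thom-class proposition with the one-line indication ``given by excision and the canonical Thom class.''

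Your two-step structure (Mayer--Vietoris for the quotient $(DEO(n),SEO(n))\to(M_2O(n),K)$, then the Thom isomorphism from Section~3.1 with $h=KO$ untwisted, $h'=k=KO$ twisted, $\eta$ trivial, $\zeta=w$) is exactly what the paper intends, and since the axiom list in Definition~\ref{homo} has Mayer--Vietoris rather than excision as a primitive, your route via an open cover and the five lemma is arguably the more honest way to realise the word ``excision'' in this framework. One small point worth spelling out if you were writing this for publication: the ``long exact sequence of $(DEO(n),SEO(n))$'' that you compare against is not itself listed among the axioms of a twisted theory, so to stay purely axiomatic it is cleaner to run a second Mayer--Vietoris for $D(EO(n))$ covered by the same annuli $V'=\{r<2/3\}$ and $U'=\{r>1/2\}$ (with $C'=\emptyset$, $D'=SEO(n)$) and compare the two Mayer--Vietoris sequences directly; the quotient map $q$ carries one cover to the other, the outer terms agree on the nose or vanish by the deformation retraction $U\simeq K$ resp.\ $U'\simeq SEO(n)$, and the five lemma then gives the middle isomorphism. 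This also sidesteps the need to verify separately that the boundary maps match. Your invocation of the Section~3.1 Thom isomorphism in the ``singular and additive, arbitrary atlas'' form is the right one for the universal bundle over $BO(n)$, and the identification of twists on both sides with $p^*w$ is correctly checked.
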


This is the twisted analogue of the classical $K$-theory Thom isomorphism
$$KO^{k+n}(MSpin(n),*) =\widetilde{KO}^{k+n}(MSpin(n)) \cong KO^k(BSpin(n))$$
We shall see in the next section that this also works for connective $K$-theory, but there are a few technical difficulties we have to address first.

\subsection{Twisted connective covers.}

We need a construction of connective covers $E\langle n \rangle$ for an orthogonal spectrum $E$ that forms a lax monoidal continuous functor. An example of such is given by connective covers built using simplicial spaces as we explain below. Assuming this for the moment we can immediately produce twisted connective covers of $\MSpinK$ and $\KOK$ using the spaces
\begin{align*}
			(\MSpinK\langle n \rangle)_m &:= EP{\mathcal O}(\ell^2) \times_{P{\mathcal O}(\ell^2)} (MSpin'_m\langle n + m \rangle)\\
						(\KOK\langle n \rangle)_m &:= EP{\mathcal O}(\ell^2) \times_{P{\mathcal O}(\ell^2)} (KO'_m\langle n + m\rangle), 
\end{align*}
respectively. Recall that our convention for $n$-connective covers has $\pi_i(E\langle n \rangle)$ vanish for $i<n$ but not necessarily for $i = n$. From the functoriality we immediately obtain a lift of $\alpha': MSpin' \rightarrow KO'$ to $MSpin'\langle n \rangle \rightarrow KO'\langle n \rangle$ and preservation of continuous group actions then produces a map $\MSpinK\langle n \rangle \rightarrow \KOK\langle n \rangle$. In accordance with standard notation we set $\koK:=\KOK\langle 0 \rangle$. However, the map $\MSpinK\langle 0 \rangle \rightarrow \MSpinK$ is obviously a weak equivalence and we shall suppress it in much of what follows. All in all we obtain a transformation of twisted homology theories
$${\alpha}_K: \MSpin_*(-;-) \longrightarrow ko_*(-;-)$$
Furthermore, we have pairings $ko' \wedge KO\langle n \rangle \rightarrow KO' \langle n \rangle$ and consequently
$$\koK \barwedge KO\langle n \rangle \longrightarrow \KOK \langle n \rangle$$
 up to homotopy as follows: Consider the commutative diagram
$$\xymatrix@-1pc{ (ko' \wedge KO\langle n \rangle)\langle n \rangle \ar[r]\ar[d]_{\simeq} & (KO' \wedge KO)\langle n \rangle \ar[r]\ar[d] & KO'\langle n \rangle \ar[d] \\
              ko' \wedge KO\langle n \rangle \ar[r] & KO' \wedge KO \ar[r] & KO'}$$
where the first line arises from the second by taking the $n$-connective covers. As above we produce a map in the homotopy category
\begin{align*} \koK \barwedge KO\langle n \rangle &\stackrel{\simeq}{\longleftarrow} EP{\mathcal O}(\ell^2) \times_{P{\mathcal O}(\ell^2)} (ko' \wedge KO\langle n \rangle)\langle n \rangle \\
 						&\longrightarrow EP{\mathcal O}(\ell^2) \times_{P{\mathcal O}(\ell^2)} (KO' \wedge KO)\langle n \rangle \\
						&\longrightarrow \KOK\langle n \rangle
\end{align*}

As in the non-parametrised case, the resulting pairing is homotopy associative, but it also satifies all the other properties present in the non-parametrised case, so that we obtain a pairing of homology and cohomology theories. 

By inspection we find the following compatibility for Thom classes.

\begin{prop}\label{morethom}
The transformation ${\alpha}_K: \MSpin_*(-;-) \longrightarrow ko_*(-;-)$ maps Thom classes to Thom classes for the above pairing.
\end{prop}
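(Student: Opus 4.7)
The plan is to imitate the fibrewise argument used in the proof of the preceding proposition. Writing $\hat t := \hat{\alpha}(t) \in ko^n(V, V-0, p^*w)$, the Thom-class condition for $\hat t$ with respect to the pairing $k_2o \wedge KO\langle n \rangle \to K_2O\langle n \rangle$ is pointwise in the base, so it suffices to verify that for each $b \in B$ the map
$$KO\langle n \rangle^l(b) \longrightarrow K_2O\langle n \rangle^{l+n}(V_b, V_b - 0, p^*w|_b), \qquad x \longmapsto \hat t|_b \cdot x$$
is an isomorphism. A choice of point in the fibre of $E\PGl$ over $w(b)$ trivialises all relevant derived fibre spectra. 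Since our connective cover construction preserves continuous $\PGl$-actions and commutes with formation of fibres of the Borel construction, this trivialisation identifies the fibres of $k_2o$ and $K_2O\langle n \rangle$ with $ko$ and $KO\langle n \rangle$ respectively, sends $\hat{\alpha}$ to the connective lift of the Atiyah-Bott-Shapiro orientation $MSpin \to KO$ of \ref{mictho}, and (using that $ko \wedge KO\langle n \rangle$ is already $n$-connective) sends the zig-zag defining the pairing to the standard $ko$-module structure on $KO\langle n \rangle$.

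Under these identifications the trivialisation induces a Spin-structure on the restriction of $V$ to $w^{-1}(w(b))$, just as in the earlier proof, and $\hat t|_b$ becomes the classical $ko$-theoretic Atiyah-Bott-Shapiro Thom class of that Spin bundle. The assertion then reduces to the purely formal statement that a Thom class $\tau \in E^n$ for a ring spectrum $E$ automatically yields Thom isomorphisms in $F$-cohomology for every $E$-module spectrum $F$: multiplication by $\tau$ in $F$-cohomology factors through the $E$-Thom isomorphism via the module action, so is itself an isomorphism. Applying this to $E = ko$ and $F = KO\langle n \rangle$ concludes the proof on fibres, and hence the proof overall.

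The main obstacle I anticipate is verifying that the zig-zag
$$k_2o \wedge KO\langle n \rangle \longleftarrow E\PGl \times_{\PGl} (ko' \wedge KO\langle n \rangle)\langle n \rangle \longrightarrow K_2O\langle n \rangle$$
does indeed restrict on fibres to the $ko$-module structure on $KO\langle n \rangle$. This is not a triviality: it relies on the fact that the connective cover construction used has been chosen so as to commute, up to canonical weak equivalence, with the formation of derived fibres of $E\PGl \times_{\PGl}(-)$, and on the weak equivalence $(ko \wedge KO\langle n \rangle)\langle n \rangle \to ko \wedge KO\langle n \rangle$ which holds because the target is already $n$-connective. Once these compatibilities are in place the argument is completed by inspection, and the formal module-spectrum fact quoted above finishes the proof.
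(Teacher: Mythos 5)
Your fibrewise reduction is exactly the argument the paper has in mind: the statement comes with no written proof beyond the phrase "by inspection," and the intended inspection is precisely the same trivialisation-of-fibres strategy used for the preceding proposition about $\hat\alpha$ giving Thom classes, which you explicitly imitate. Your expansion correctly identifies the two nontrivial compatibilities (that the simplicial connective-cover construction passes to fibres of the Borel construction, and that the $\langle n \rangle$-zig-zag restricts on fibres to the ordinary $ko$-module action on $KO\langle n\rangle$ because $ko' \wedge KO\langle n\rangle$ is already $n$-connective) and then closes with the standard fact that a $ko$-Thom class, being a unit in $ko^k(V_b,V_b-0)\cong\mathbb Z$ on each fibre, yields Thom isomorphisms in the cohomology of every $ko$-module spectrum. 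One very small point: the phrase "factors through the $E$-Thom isomorphism via the module action" is loose — what actually happens is that the unit condition on $\tau|_b$ in $\pi_0 E$ immediately forces the fibrewise map in $F$-cohomology to be an isomorphism, which is the definition of Thom class used in this paper — but the mathematics behind it is correct.
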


Unraveling this we find that for a rank $k$ vector bundle $V$ as above, we have an isomorphism $$ko\langle l \rangle^{n+l}(DV,SV;p^*w(V)) \cong ko\langle l \rangle^n(B)$$

Finally, we present the definition of the $n$-connective cover functor we used above. 

\begin{prop}
Realising the simplicial \emph{space}
$$k \mapsto X\langle n \rangle_k = \{f:\Delta^k \rightarrow X \mid f\left((\Delta^k)^{(n-1)}\right) = x\}$$
associated to a pointed space $(X,x)$ produces an $n$-connective cover of $X$ with all the stated properties; here $(\Delta^k)^{(n-1)}$ denotes the simplicial $n-1$-skeleton of $\Delta^k$.
\end{prop}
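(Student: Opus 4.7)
The plan is to build a natural evaluation map $e:|X\langle n\rangle_\bullet|\to X$, $[(\sigma,t)]\mapsto \sigma(t)$, and then verify three things: (i) the realisation is $n$-connected, (ii) $e$ induces isomorphisms on $\pi_i$ for $i>n$, and (iii) the construction is strictly functorial in pointed maps and preserves continuous group actions.

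For (i) I would observe that whenever $k\leq n$ one has $(\Delta^k)^n=\Delta^k$, forcing $X\langle n\rangle_k=\{\mathrm{const}_x\}$. Hence the image of $\coprod_{k\leq n}X\langle n\rangle_k\times\Delta^k$ in the realisation collapses to the basepoint, and the skeletal filtration of $|X\langle n\rangle_\bullet|$ only begins contributing nontrivial cells in dimension $n+1$; this gives the $n$-connectedness.

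For (ii) I would represent a class in $\pi_i(X)$ with $i>n$ by $\phi:(\Delta^i,\partial\Delta^i)\to(X,x)$; since $i>n$ forces $(\Delta^i)^n\subseteq\partial\Delta^i$, automatically $\phi\in X\langle n\rangle_i$, and all face maps $d_j\phi=\phi\circ\delta^j$ are constant. Thus the corresponding $i$-cell in the realisation has boundary at the basepoint and defines a sphere sent by $e$ to $[\phi]$, giving surjectivity. For injectivity I would start from $\phi$ as above with $[\phi]=0\in\pi_i(X)$ and encode a null-homotopy rel boundary as a singular simplex $\tau:\Delta^{i+1}\to X$ with $d_0\tau=\phi$ and $d_j\tau=\mathrm{const}_x$ for $j\geq 1$. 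The crucial observation is that such $\tau$ automatically lies in $X\langle n\rangle_{i+1}$: any face of dimension $\leq n$ uses at most $n+1$ of the $i+2\geq n+3$ vertices and therefore omits at least two vertices, in particular some $v_j$ with $j\geq 1$, placing it inside $d_j\Delta^{i+1}$ where $\tau$ restricts to the constant map. Thus $\tau$ furnishes an $(i+1)$-cell providing an explicit null-homotopy of the sphere inside the realisation, so $e_\ast$ is injective.

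Point (iii) will be essentially formal: $X\mapsto X\langle n\rangle_\bullet$ is strictly functorial in pointed continuous maps via postcomposition, geometric realisation of simplicial spaces is likewise strictly functorial, so any diagram of pointed maps that commutes before the construction commutes after it; and a continuous $G$-action on $(X,x)$ induces by postcomposition continuous actions on each mapping space $X\langle n\rangle_k$ that commute with all face and degeneracy maps and hence assemble to a continuous action on $|X\langle n\rangle_\bullet|$. The dimension count in the injectivity part of (ii) will be the only genuinely nontrivial ingredient; everything else follows immediately from unwinding definitions.
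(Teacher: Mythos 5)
Your argument treats $|X\langle n\rangle_\bullet|$ as though it were the realisation of a simplicial \emph{set}, whereas here the levels $X\langle n\rangle_k$ are themselves mapping \emph{spaces} carrying non-trivial topology. This is where the proposal runs into trouble.

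For step (i), the phrase ``the skeletal filtration \dots only begins contributing nontrivial cells in dimension $n+1$'' is only justified for simplicial sets: for a simplicial space, the simplicial filtration $F_0\subseteq F_1\subseteq\cdots$ of the realisation has subquotients of the form $F_k/F_{k-1}\cong\Sigma^k\bigl(X\langle n\rangle_k/L_k\bigr)$ (with $L_k$ the latching object), which are suspensions of possibly complicated spaces rather than wedges of $k$-spheres, and even this description presupposes that the filtration is by cofibrations, which asks for Reedy cofibrancy of $X\langle n\rangle_\bullet$ and is not given. So $F_n=*$ does not by itself yield $n$-connectedness without further input.

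The more serious gap is in the injectivity half of (ii). What you actually prove is: given a class in $\pi_i(|X\langle n\rangle_\bullet|)$ represented by a sphere coming from a \emph{single} simplex $\phi\in X\langle n\rangle_i$ with constant faces, it dies in the realisation if it dies in $X$. Combined with your surjectivity argument this exhibits a set-theoretic section of $e_*$ that sends $0$ to $0$ — but that is automatic and proves nothing about injectivity of $e_*$. To conclude that $e_*$ is injective you would need to know that \emph{every} element of $\pi_i(|X\langle n\rangle_\bullet|)$, and indeed every null-homotopy, is homotopic to one supported on a single simplex, i.e.\ a simplicial/cellular approximation theorem. For simplicial sets this is classical and exactly the content of the untopologised Eilenberg-subcomplex argument; for simplicial \emph{spaces} it is simply false in general. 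Your ``dimension count'' observation is correct and appealing, but it only feeds the (already established) surjectivity.

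The paper avoids both difficulties by a reduction to a bisimplicial \emph{set}: it introduces $Z_{k,l}=\{f:\Delta^k\times\Delta^l\to X\mid f(\Delta^k\times(\Delta^l)^n)=x\}$, notes that realising in the $l$-direction gives levelwise the simplicial-set Eilenberg subcomplex $|Sing_\bullet(C(\Delta^k,X))\langle n\rangle|$, where the classical untopologised theorem applies and identifies each level as an $n$-connective cover of $X$, and that realising in the $k$-direction gives $l\mapsto|Sing_\bullet(X\langle n\rangle_l)|$, levelwise weakly equivalent to $X\langle n\rangle_\bullet$. The homotopy-theoretic heavy lifting is thereby delegated to the bisimplicial set $Z$, where none of the simplicial-space pathologies arise. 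Your point (iii) about strict functoriality and preservation of continuous group actions is correct and matches what the paper treats as evident; if you keep the direct approach you should either add a cofibrancy/goodness hypothesis making a simplicial approximation theorem available, or switch to the bisimplicial reduction.
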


\begin{proof}
All the assertions are more or less evident, except for maybe the fact that $| X\langle n \rangle_\bullet |$ is indeed a connective cover. This is wellknown for the untopologised version of the above construction (compare e.g. \cite[Paragraph 8]{Ma}) and can be reduced to that by the following trick which we thank Michael Weiss for pointing out to us: \\
The bisimplicial \emph{set} 
$$Z_{k,l} = \{f:\Delta^k \times \Delta^l \rightarrow X \mid f\left(\Delta^k \times (\Delta^l)^{(n-1)}\right) = x\}$$
in the one direction realises to $|Z_{k,\bullet}| = |Sing_\bullet(C(\Delta^k,X))\langle n \rangle|$, where $C(\Delta^k,X)$ denotes the \emph{space} of continuous maps and $\langle n \rangle$ denotes the simplicial \emph{set} construction of connective covers. The arising simplicial \emph{space} admits the obvious levelwise weak equivalence (by constant maps) from the constant simplicial space given by $|Sing_\bullet(X) \langle n \rangle|$. We conlude that $|Z_{\bullet,\bullet}|$ is a connective cover of $X$. \\
Realising in the other direction first produces $|Z_{\bullet,l}| = |Sing_\bullet(X\langle n \rangle_l)|$, the singularisation of the $l$-space of the singular space in question. Therefore, we conclude that $|Z_{\bullet,\bullet}|$ and $|X\langle n \rangle_\bullet|$ are weakly equivalent and a quick check of the maps involved yields the proposition.
\end{proof}

\subsection{Fundamental classes.}\label{fund}

Given a closed, connected, smooth $n$-manifold $M$ with stable normal bundle $\nu$ (as defined in \ref{from-surgery-to-cobordism}), a choice of bundle map $c_\nu: \nu \rightarrow \gamma$, where $\gamma$ is a universal $-n$-dimensional stable vector bundle over $BO$ determines a class
\[[M] \in \MSpin_n(M; w \circ \overline{c_\nu}),\]
the \emph{fundamental class} of $M$, by the Pontryagin-Thom construction. Recall that $w \colon BO \rightarrow K$ denotes our chosen representative for the first and second Stiefel-Whitney class so that $w \circ \overline{c_\nu}$ is then a representative of the first two normal Stiefel-Whitney classes of $M$. Let us also warn the reader that while the choice of $c$ is a contractible one, merely fixing the base space part $\overline{c_\nu}$ of $c_\nu$ (and thus the twist $w \circ \overline{c_\nu}$) does \emph{not} uniquely determine $[M]$; the additional datum corresponds to the choice of spin structure in the untwisted case (i.e. when a null-homotopy of $w \circ \overline{c_\nu}$ is given). The push forward of $[M]$ generates $\MSpin_*(M,M-x;w \circ \overline c_\nu)$ freely as a module over $\MSpin_*$ for every $x \in M$, because first the inclusion  $$\MSpin_*(U,U-x;w \circ \overline c_\nu) \rightarrow \MSpin_*(M,M-x;w \circ \overline c_\nu)$$
induces an isomorphism by excision, secondly for $U$ a ball around $x$, we find 
$$\MSpin_*(U,U-x;(w \circ \overline c_\nu)_{|U}) \cong \MSpin_*(U,U-x)$$
induced by some nullhomotopy of $w \circ \overline{c_\nu}_{|U}$, thirdly all of this is compatible with the $\MSpin_*$-module structure and finally $[M]$ corresponds to the image of a fundamental class in $\MSpin_n(U,\partial U)$. \\ 

Given in addition a map $f \colon M \rightarrow X$ for some space $X$ over $K$, say via $\zeta \colon X \rightarrow K$, such that the diagram
\[\xymatrix@-1pc{M \ar[d] \ar[rr]^f && X \ar[lld]^\zeta \\
            K && }\]
commutes, we obtain a class
\[f_*([M]) \in \MSpin_n(X;\zeta).\]
In other words any commutative diagram of the form
$$\xymatrix@-1pc{BO \ar[rrd]_{w} && M \ar[ll]_-c \ar[rr] && X \ar[lld]^\zeta \\
                                    && K&&}$$
(or indeed one with a specified homotopy between the two compositions) together with a concordance between $c^*(\gamma) \oplus TM$ and the trivial $n$-dimensional stable bundle provides a cycle for $\MSpin_n(X;\zeta)$.

Via the Pontryagin-Thom construction proposition \ref{norepl} shows that for $X$ a cell complex, all elements of $\MSpin_n(X;\zeta)$ are realised by such cycles and indeed that $\MSpin_n(X;\zeta)$ is given by quotienting these cycles by the appropriate bordism relation, which we will refrain from spelling out. Because of this description we will use the notation $\Omega^{Spin}_*(-;-)$ interchangably with $\MSpin_*(-;-)$ from now on; we shall not have to deal with spaces not of the homotopy type of a cell complex.
Now, if $M$ is connected and the universal cover of $M$ is spin, then we saw at the end of section \ref{from-surgery-to-cobordism}, that there is a commutative diagram
$$\xymatrix@-1pc{ M \ar[d] \ar[rr]^-p &&  B\pi \ar[lld]^{u} \\
                             K && }$$
where $\pi$ is the fundamental group of $M$ and $p$ classifies the universal cover of $M$. Extending this diagram to a cycle for $u: B\pi \to K$ we obtain a class 
$$p_*([M]) \in \MSpin_n(B\pi;u)$$

We are now finally able to formulate the conjecture that is the long term goal of our project.

\begin{conj}[Stolz 1995]\label{ourconj}
If for a connected, closed, smooth manifold $M$ of dimension $n \ge 5$, whose universal cover is spin, we have
$$0 = {\alpha}_K(p_*([M])) \in ko_n(B\pi; u)$$
for some (and then every) choice of necessary auxiliary data,
then $M$ carries a metric of positive scalar curvature.  
\end{conj}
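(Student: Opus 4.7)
The plan is to adapt Stolz's program from \cite{St2} (completed by F\"uhring in \cite{Fu}) for the spin case to the twisted setting, using the homotopy-theoretic machinery established in sections 5 and 6. By Theorem \ref{bord} combined with Corollary \ref{bordi}, it suffices, given $M$ with $\hat\alpha(c_*(M)) = 0$, to exhibit some representative of $c_*(M) \in \Omega^{Spin}_n(B\pi, w^\nu(M))$ that admits a positive scalar curvature metric; then $M$ itself inherits one via Gromov-Lawson surgery.

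I would handle the argument one prime at a time. At odd primes $MSpin$ becomes equivalent to $MSO$, and one expects a similar reduction of $M_2O_{(p)}$ to a twisted oriented cobordism spectrum; here the statement should collapse to the known results on oriented/totally non-spinnable manifolds from \cite{RoSt2}, since the ABS hypothesis contributes no information at odd primes. The genuine difficulty is at the prime $2$, where the generalised Anderson-Brown-Peterson splitting (Theorem \ref{genspli}) decomposes $M_2O_{(2)}$ as a product in which $\hat\alpha$ corresponds to projection onto the single factor $k_2o$ (the summand with $J = \emptyset$). Consequently a class in $\ker\hat\alpha$ is detected entirely by the remaining factors $\prod_{J\ne\emptyset} k_2o\langle n_J\rangle \times \prod_i K \times sh^i H\mathbb Z/2$.

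The geometric heart of the proof is then to realise enough psc bordism classes to generate $\ker\hat\alpha$ as a module over $\Omega^{Spin}_*$. Following Stolz, the key construction is that of $\mathbb{HP}^2$-bundles: a fibre bundle with fibre $\mathbb{HP}^2$ and structure group $PSp(3)$ carries a psc metric on its total space by the Vilms construction, independently of the geometry of the base, and similar elementary psc manifolds should cover the Eilenberg-MacLane summands. One must set things up so that the total space inherits a $\xi_M$-structure compatible with a chosen reference map to $B\pi$, which amounts to a compatibility condition between the $PSp(3)$-structure and the twist $w^\nu$.

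The main obstacle will be verifying that the bordism classes of such twisted $\mathbb{HP}^2$-bundles, together with the geometric witnesses for the Eilenberg-MacLane summands, precisely generate $\ker\hat\alpha$ as an $\Omega^{Spin}_*$-module. In the untwisted case this depends on a delicate calculation of $KO$-Pontryagin characteristic numbers on $\mathbb{HP}^2$-bundles combined with a transfer argument. In the twisted case the analogous computation should proceed with the classes $\overline\theta_J$ from section 5, using the extended $\mathcal A(1)$-module structure of $H^*(M_2O;\mathbb Z/2)$ from Corollary \ref{cohom} and the explicit description of $H^*(k_2o\langle n_J\rangle;\mathbb Z/2)$ from Theorem \ref{cohok} to extract a module-generating set for $\ker\hat\alpha$. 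This is exactly the place where the cohomological machinery of the paper becomes indispensable, but the explicit geometric matching with twisted $\mathbb{HP}^2$-bundles, and in particular the construction of a twisted transfer that interacts correctly with the $\overline\theta_J$, remains the delicate step and is why the conjecture is not settled in the present work.
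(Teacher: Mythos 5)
This statement is labeled a \emph{Conjecture} in the paper and remains open there; you are right not to claim a full proof, and your outline of the intended strategy (reduce to a bordism statement via \ref{bord} and \ref{bordi}, split into primes, at $p=2$ realise $\ker\hat\alpha$ geometrically via twisted $\mathbb{HP}^2$-bundle transfers \`a la Stolz) matches the discussion in section 7 in broad strokes.

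However, you misidentify where the actual impasse lies, and the misdiagnosis matters. The paper \emph{does} construct the twisted transfer $T: M_2O \wedge S^8 \wedge B\mathrm{PSp}(3)_+ \to M_2O$, \emph{does} show that $\hat\alpha\circ T$ is nullhomotopic by the families index theorem, and \emph{does} show (Proposition \ref{trasur}) that any lift $\hat t$ of $T$ to $\hofib(\hat\alpha)$ is split injective in mod $2$ cohomology, with the split at least $H^*(K)$-linear. So ``the construction of a twisted transfer that interacts correctly with the $\overline\theta_J$'' is not the delicate step -- that part is under control. The genuine obstruction, which your proposal misses entirely, is algebraic and is the content of Theorem \ref{nogo}: the $\underline{\mathcal A(1)}$-module $\overline{H^*(M_2O)}$ of $H^*(ko)$-primitives is \emph{not} isomorphic to $\underline{\mathcal A(1)}_\varphi \otimes_{\mathcal A(1)} M$ (with $M$ the ABP-module), even though the generalised ABP splitting identifies $H^*(M_2O)$ with $\underline{\mathcal A}_\varphi\otimes_{\mathcal A(1)}M$. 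The failure is concrete: after passing to homology and de-Thom-ifying, the map $\overline{H_*(BSpin)} \to \overline{H_*(BO)}$ is not split injective over $\mathcal A(1)$ because $x_2^8$ is $\mathcal A(1)$-indecomposable on the left but becomes decomposable (via $Sq_2(x_{18})=x_2^8$) on the right in degree $16$. This blocks the na\"ive transport of Stolz's splitting argument -- there is no $\varphi$-linear split of the projection $\overline{H^*(M_2O)}\to\overline{H^*(MSpin)}$ of the sort one would need to upgrade the $H^*(K)$-linear split of \ref{trasur} to an $\underline{\mathcal A}$-linear one. Relatedly, Proposition \ref{noringsplit} shows there is no coalgebra surjection $H^*(M_2O)\to H^*(k_2o)$, so one cannot even set up twisted $H^*(k_2o)$-primitives in analogy with Pengelley's construction. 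Your closing sentence should therefore locate the difficulty in this module-theoretic obstruction, not in the geometry of the transfer. A smaller quibble: at odd primes the manifolds in question are almost spin, not totally non-spinnable, so appealing to the $\Omega_*(B\pi)$-results of \cite{RoSt2} is not a direct reduction; the paper attributes the odd-primary input to F\"uhring's quite different, Baas-Sullivan-theoretic argument.
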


\section{The generalised Anderson-Brown-Peterson splitting}

\subsection{Twisted $K$-Theory Pontryagin classes.}

With the Thom classes in place, we can give a generalisation of the Anderson-Brown-Peterson splitting. Note however, that while the focus of the original paper by Anderson, Brown and Peterson \cite{AnBrPe2} lies on the calculation of the $Spin$-cobordism ring, their result also facilitated computations of other $Spin$-cobordism groups by reduction to calculations in $KO$-theory. It is this second part that our result generalises to the case of twisted cobordism groups. We begin by recalling the essentials from \cite{AnBrPe2}. They define characteristic classes $\pi_k$ for vector bundles of rank $n$ (which is suppressed in the notation because of part (2) of the following proposition) with values in $KO^0$.

\begin{prop}\label{prop-of-Pontryagin-classes}
The following holds for vector bundles $E$ and $F$:
\begin{enumerate}
 \item $\pi_k(E \oplus F) = \sum_{i = 0}^k \pi_i(E) \cup \pi_j(F)$ 
 \item $\pi_k(E \oplus \mathbb R^m) = \pi_k(E)$
 \item $\pi_k(E) = 0$, if $2k > rank(E)$ and $E$ orientable 
\end{enumerate}
\end{prop}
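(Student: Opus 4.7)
The plan is to define the total class $\pi_t(E) = \sum_{k \geq 0} \pi_k(E) t^k \in KO^0(X)[[t]]$ via the $KO$-theoretic splitting principle and read off the three properties from the resulting product formula. For an oriented real bundle $E$ of rank $2n$ one pulls back to the oriented 2-plane flag bundle $p \colon \mathrm{Fl}^+(E) \to X$, over which $p^*E$ splits as an orthogonal direct sum $L_1 \oplus \cdots \oplus L_n$ of oriented rank 2 bundles. Each $L_i$ is canonically a complex line bundle, and one sets
\[
a_i = [L_i] + [L_i^{-1}] - 2 \in KO^0(\mathrm{Fl}^+(E)), \qquad \pi_t(E) := \prod_{i=1}^{n} (1 + a_i t).
\]
Since $p^*$ is split injective onto the symmetric-function part of $KO^*(\mathrm{Fl}^+(E))$, the coefficients of this polynomial descend to classes $\pi_k(E) \in KO^0(X)$. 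Odd-rank bundles are handled by including a trivial rank 1 summand (which contributes nothing to the product), and non-orientable bundles by declaring (2) to be the extension rule after stabilisation.

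With this setup in hand, the three properties are formal consequences of the product formula. Over $\mathrm{Fl}^+(E) \times_X \mathrm{Fl}^+(F)$ the splitting of $E \oplus F$ is the disjoint union of the splittings of $E$ and $F$, giving $\pi_t(E \oplus F) = \pi_t(E)\, \pi_t(F)$; extracting the coefficient of $t^k$ yields (1) (with the evident typo $j = k-i$ corrected). A trivial rank 2 factor contributes $a_i = 1 + 1 - 2 = 0$, so its factor $1 + a_i t = 1$ drops out of the product, giving (2). Finally, (3) is built into the construction: for orientable $E$ of rank $n$ the polynomial $\pi_t(E)$ has degree $\lfloor n/2 \rfloor$ in $t$, so $\pi_k(E) = 0$ whenever $2k > n$. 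Note that orientability is essential in two places, namely to make sense of $L_i$ as a complex line bundle and to obtain termination of $\pi_t(E)$ in degree $\lfloor n/2 \rfloor$ rather than $n$.

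The main obstacle is the very first step, verifying that elementary symmetric polynomials in the $a_i$ genuinely descend to $KO^0(X)$. The cleanest route is to settle the universal case over $BSO(2n)$: one checks that the restriction map $KO^*(BSO(2n)) \to KO^*(BSO(2)^n)^{S_n}$ is surjective onto the $S_n$-invariant subring (at least after $2$-localisation, which is all we need in view of the later splitting theorem), using the known structure of $KO^*$ of these classifying spaces and the Pontryagin character to reduce to a calculation in rational cohomology carried out in \cite{AnBrPe2}. Everything else then follows by naturality, with the non-orientable and odd-rank extensions requiring only a straightforward compatibility check to ensure (1) and (2) survive stabilisation.
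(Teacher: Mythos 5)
The paper's own proof of this proposition consists of a single citation to Anderson--Brown--Peterson: the classes and all three properties are established in and around Proposition 5.1 of \cite{AnBrPe2}. Your proposal reconstructs what is essentially their splitting-principle argument, so the underlying idea matches. But as written it has a genuine gap: properties (1) and (2) are asserted for \emph{all} vector bundles, and the paper really needs $\pi_j \in KO^0(BO)$ rather than just $KO^0(BSO)$, yet your construction defines $\pi_k$ only for orientable bundles via the oriented $2$-plane flag bundle. Your suggestion of ``declaring (2) to be the extension rule after stabilisation'' for non-orientable $E$ does not produce a class, since stabilising by a trivial bundle does not render a non-orientable bundle orientable. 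The construction in \cite{AnBrPe2} splits into \emph{unoriented} rank-two pieces over $BO(2)^n$, using $[\lambda_2]-2$ for the universal $O(2)$-bundle, and the descent together with properties (1) and (2) must be checked at that level of generality.

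Two smaller points. The class $a_i$ should be taken as $[L_i^{\mathbb R}]-2 \in KO^0$, the class of the underlying real rank-two bundle minus $2$; your expression $[L_i]+[L_i^{-1}]-2$ is its image under complexification and lives a priori in $KU^0$. And the descent of the elementary symmetric polynomials to the base, which you rightly flag as the crux, is left entirely as a plan: the proposed route via the Pontryagin character reduces to rational cohomology, which kills precisely the $2$-local information you say you want, so it cannot by itself establish surjectivity onto the symmetric invariants. None of this invalidates the outline, but the step you yourself call ``the main obstacle'' remains unaddressed.
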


\begin{proof}
These properties are all stated right in or around \cite[Proposition 5.1]{AnBrPe2}.
\end{proof}

Because of \ref{prop-of-Pontryagin-classes}(2) the $\pi_j$ for varying $n$ induce a compatible system of classes in $KO^0(BO(n))$ when applied to the universal bundle. By the Milnor-sequence one obtains classes $\pi_j \in KO^0(BO)$, since the $lim^1$-term vanishes by application of the real version of the Atiyah-Segal completion theorem and explicit computation of the representation rings involved. For a finite sequence (possibly empty) of natural numbers $J= (J_1, \dots , J_k)$ denote $\pi_J = \prod_i \pi_{J_i}$ and $n(J) = \sum_i J_i$. Obviously, sequences that arise by rearrangement give the same class. Anderson, Brown and Peterson then prove the following.

\begin{thm}\label{lift}
The class $\pi_J \in KO^0(BSO)$ lies in the image of the map $$ko\langle n_J \rangle^0(BSO) \rightarrow KO^0(BSO)$$ where 
$$n_J = \left\{\begin{array}{cl} 4n(J)        & n(J) \text{ even} \\
                                 4n(J) - 2    & n(J) \text{ odd} \end{array}\right.$$
\end{thm}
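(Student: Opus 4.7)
The plan is to combine individual lifts $\pi_k \in ko\langle a_k\rangle^0(BSO)$ with the ring-spectrum pairings $ko\langle a\rangle \wedge ko\langle b\rangle \to ko\langle a+b\rangle$ in order to produce the claimed lift of $\pi_J$.

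First, for each individual $\pi_k$: by the $KO$-theoretic splitting principle one pulls back to a sum of rank-$2$ oriented bundles (equivalently, complex line bundles $L_i$) and writes $\pi_k$ as the $k$-th elementary symmetric polynomial in the classes $x_i := L_i + \bar{L}_i - 2 \in \tilde{KO}^0$. Each $x_i$ lifts to $ko\langle 2\rangle^0$ but not to $ko\langle 3\rangle = ko\langle 4\rangle$, as restriction to $\mathbb{CP}^1 = S^2$ shows: there $x_i$ is detected by the second Stiefel--Whitney class, i.e.\ by the nontrivial element of $\tilde{KO}^0(S^2) = \mathbb{Z}/2$. A product of $k$ such factors a priori lies in $ko\langle 2k\rangle^0$, but a more careful Postnikov analysis --- using that $x_i^2$ already lifts to $ko\langle 8\rangle^0$ because the corresponding obstruction is killed by $\eta^4 = 0 \in \pi_*(KO)$ --- improves this to $\pi_k \in ko\langle 4k\rangle^0$ when $k$ is even and $\pi_k \in ko\langle 4k-2\rangle^0$ when $k$ is odd. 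The Bott gaps (the vanishing of $\pi_3$, $\pi_5$, $\pi_6$, $\pi_7$ of $KO$) force the lift to jump several degrees at each step and are responsible for the $4k-2$ versus $4k$ dichotomy.

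Combining these individual lifts via the ring-spectrum pairings produces an a priori lift of $\pi_J$ to $ko\langle 4n(J) - 2r\rangle^0(BSO)$, where $r$ is the number of odd entries of $J$. If $r \leq 1$ this already equals $n_J$. For $r \geq 2$ one pairs up odd entries and argues that the product of two lifts in $ko\langle 4k_i-2\rangle$ and $ko\langle 4k_j-2\rangle$ with $k_i, k_j$ both odd in fact lifts four additional degrees, to $ko\langle 4(k_i+k_j)\rangle$; once again this is the Postnikov translation of $\eta^2\cdot\eta^2 = \eta^4 = 0$. Iterating removes all the $2$-deficits coming from pairs of odd entries, leaving at most one unpaired entry precisely when $n(J)$ is odd, and hence yielding exactly $n_J$ in all cases.

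The main obstacle I expect is this last improvement step --- verifying that the Postnikov obstructions to merging two $ko\langle 4k_i-2\rangle$ lifts into a single $ko\langle 4(k_i+k_j)\rangle$ lift really do vanish on $BSO$, not merely on $(BSO(2))^\infty$. This requires careful bookkeeping of the relevant $k$-invariants of $KO$ against the cohomology of $BSO$ with $\mathbb{Z}$- and $\mathbb{Z}/2$-coefficients in the appropriate range. An appealing alternative would be to construct $\pi_k$ from the outset inside the correct connective cover --- for instance by factoring through the Atiyah--Bott--Shapiro orientation $MSpin \to ko$ applied to an explicit Spin-cobordism class representing the $KO$-Pontryagin character --- sidestepping the obstruction-theoretic analysis entirely at the cost of more delicate geometric input.
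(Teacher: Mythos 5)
The paper does not prove Theorem~\ref{lift}; it is quoted directly from Anderson--Brown--Peterson \cite{AnBrPe2}, which is cited for the proof, so there is no internal argument here to compare against. Assessing your sketch on its own terms, the central gap is the reliance on the splitting principle. You carry out all the lifting analysis after pulling back along $f\colon (BSO(2))^{\times N}\to BSO$, and then conclude that $\pi_J$ itself lifts on $BSO$. The splitting principle gives injectivity of $f^*$ on $KO^0$, but that does not let you descend a lift through $ko\langle n\rangle\to KO$: the obstruction to lifting $\xi\in KO^0(BSO)$ is a class in $[\Sigma^\infty BSO_+,\tau_{<n}KO]$, assembled from $H^*(BSO;\pi_j KO)$ for $j<n$, and the vanishing of $f^*$ of that obstruction does not force the obstruction itself to vanish. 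In fact $f^*$ annihilates all of the $2$-torsion in $H^*(BSO;\mathbb Z)$ (the target is polynomial on degree-$2$ generators), so the integral obstruction groups are not faithfully reflected on the splitting space; and a transfer cannot repair this, since the relevant Weyl group orders are even and one cannot invert $2$ here. The obstruction analysis really has to be carried out on $BSO$ itself, using the explicit structure of $H^*(BSO;\mathbb Z)$ and $H^*(BSO;\mathbb Z/2)$ in the relevant range.

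Independently of the descent issue, the two ``improvement'' steps --- from $ko\langle 2k\rangle$ to $ko\langle 4k\rangle$ or $ko\langle 4k-2\rangle$ for a single $\pi_k$, and from $ko\langle 4(a+b)-4\rangle$ to $ko\langle 4(a+b)\rangle$ when merging two odd-index factors --- are asserted rather than proved, as you yourself flag. The $\eta^4=0$ heuristic correctly explains why no obstruction appears on bottom cells (a small correction: $\pi_k$ involves only products $x_ix_j$ with $i\neq j$, not $x_i^2$, and for the external product $x_i\times x_j$ restricted to $S^2\wedge S^2$ the relevant vanishing is $\eta^2\cdot\eta^2=0$ in $\pi_4 KO$), but converting this into a vanishing statement for all the intervening $k$-invariant obstructions evaluated on $BSO$ requires precisely the computations you label the ``main obstacle'' and leave out. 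Your closing suggestion of producing the lifts via the Atiyah--Bott--Shapiro orientation is a sensible alternative route, but it too is offered as a plan rather than carried through.
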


The same statement thus also holds for the restrictions to $BSpin$. The indeterminacy of such lifts is also determined in \cite{AnBrPe2}, but we shall not make use of that. Using the Thom classes for spin bundles along the pairing $ko^*(-) \times ko\langle n_J \rangle^*(-) \rightarrow ko\langle n_J \rangle^*(-)$, that we generalised in the last section, we find
\begin{align*} 
   [MSpin, ko\langle n_J \rangle] \longrightarrow & \lim_n ko\langle n_J\rangle^n(MSpin(n),*) \\
                                          \cong \ & \lim_n ko\langle n_J\rangle^0(BSpin(n),*) \\
 		                   \longleftarrow & ko\langle n_J\rangle^0(BSpin)
\end{align*}
where the first map is a surjection by the Milnor-sequence. We thus obtain a transformation $\MSpin_*(-) \rightarrow ko\langle n_J \rangle_*(-)$ for each $J$. We will use the obvious analogue of the above calculation
\begin{align*} 
   [\MSpinK, \koK\langle n_J \rangle] \longrightarrow & \lim_n ko\langle n_J\rangle^n(\MSpinK(n),K;w) \\
                                           \cong \ & \lim_n ko\langle n_J\rangle^0(BO(n)) \\
 		                    \longleftarrow & ko\langle n_J\rangle^0(BO)
\end{align*}
to analyse the twisted theory $\MSpin_*(-;-)$. However, we do not know whether theorem \ref{lift} also holds for $BO$ instead of $BSO$ or $BSpin$. To circumvent this we consider the following characteristic classes instead:

\begin{defi}
Put $\overline{\pi}_j(E) = \pi_j(E \oplus \Lambda^nE)$ for any vector bundle $E$ of rank $n$.
\end{defi}

By the second stated property of the $\pi_j$'s we have $\overline{\pi}_j(E) = \pi_j(E)$ for any orientable bundle. With the same discussion as above we thus obtain classes $\overline{\pi}_j \in KO^0(BO)$ and by the comment just made these restrict to our original $\pi_j \in KO^0(BSO)$. A lift of $\pi_J$ in $ko\langle n_J\rangle^0(BSO)$ thus determines a lift of $\overline{\pi}_j$ in $ko\langle n_J\rangle^0(BO)$ and a transformation $\MSpin_*(-;-) \rightarrow ko\langle n_J\rangle_*(-;-)$ of twisted homology theories. 

\begin{cor}\label{lift2}
The class $\overline{\pi}_J \in KO^0(BO)$ lies in the image of the map $ko\langle n_J \rangle^0(BO) \rightarrow KO^0(BO)$, where $n_J$ is as above.
\end{cor}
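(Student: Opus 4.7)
The plan is to realise $\overline{\pi}_J$ as the pullback of $\pi_J$ along a stable map $g: BO \rightarrow BSO$ classifying the natural operation $E \mapsto E \oplus \Lambda^{\mathrm{top}} E$, and then transport the lift supplied by Theorem \ref{lift} along $g$.

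The first step is to verify that for every rank $n$ bundle $E$ the sum $E \oplus \Lambda^n E$ is orientable, since $w_1(E \oplus \Lambda^n E) = w_1(E) + w_1(\Lambda^n E) = 2 w_1(E) = 0$. Consequently the classifying map $BO(n) \rightarrow BO(n+1)$ of the universal bundle $E_n \oplus \Lambda^n E_n$ lifts canonically to a map $g_n: BO(n) \rightarrow BSO(n+1)$. Using $\Lambda^{n+1}(E_n \oplus \mathbb R) \cong \Lambda^n E_n$ one checks that the $g_n$ are compatible with the stabilisation maps up to homotopy, so they assemble into a map $g: BO \rightarrow BSO$.

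Next, by the naturality of the characteristic classes $\pi_j$ on $BSO$, one has $g^* \pi_j = \pi_j(E_{BO} \oplus \Lambda^{\mathrm{top}} E_{BO}) = \overline{\pi}_j \in KO^0(BO)$, and hence by multiplicativity also $g^* \pi_J = \overline{\pi}_J$.

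Finally, Theorem \ref{lift} provides a lift $\tilde{\pi}_J \in ko\langle n_J \rangle^0(BSO)$ of $\pi_J$. By naturality of the forgetful transformation $ko\langle n_J \rangle^0(-) \rightarrow KO^0(-)$, the pullback $g^* \tilde{\pi}_J \in ko\langle n_J \rangle^0(BO)$ then maps to $g^* \pi_J = \overline{\pi}_J$, exhibiting the required lift. The only point demanding any care is the passage from the finite approximations $g_n$ to a well-defined map (or rather a compatible system of classes in the limit) $g: BO \rightarrow BSO$, but this is routine given the stabilisation compatibility above and the same Milnor sequence argument used to define $\pi_J$ and $\overline{\pi}_J$ in the first place; no serious obstacle arises.
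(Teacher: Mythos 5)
Your proposal is correct and essentially reproduces the paper's argument: the paper defines $\overline{\pi}_j$ exactly as $\pi_j(E \oplus \Lambda^{\mathrm{top}}E)$ and remarks that the construction is underlain by the first factor of the splitting $BO \simeq BSO \times K(\mathbb Z/2,1)$, which is precisely your map $g$ classifying $E \mapsto E \oplus \Lambda^{\mathrm{top}}E$. Your write-up simply makes explicit the orientability check, the stabilisation compatibility and the naturality argument that the paper compresses into a single sentence.
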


Underlying this construction is of course the homotopy equivalence
$$BO \rightarrow BSO \times K(\mathbb Z/2,1),$$
whose finite steps (i.e. the restrictions $BO(n) \rightarrow BSO(n+1) \times K(\mathbb Z/2,1)$) represent adding (in the first component) and remembering (in the second) the top exterior power of a bundle.

\subsection{The Anderson-Brown-Peterson splitting and its twisted generalisation.}\label{calc}

The investigation of the $Spin$-cobordism ring in \cite{AnBrPe2} starts with the following theorem.

\begin{thm}
If $\theta_J: MSpin \longrightarrow ko\langle n_J \rangle$ corresponds to $\pi_J$ under the Thom isomorphism, then, as $J$ runs through all non-decreasing sequences with $J_1 > 1$, the induced map
$$\xymatrix{ \bigoplus_J H^*(ko\langle n_J \rangle, \mathbb Z/2)  \ar[r] & H^*(MSpin, \mathbb Z/2)}$$
is injective with graded-free cokernel over $\mathcal A_2$.
\end{thm}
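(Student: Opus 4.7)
The approach is to reduce the assertion to an $\mathcal A(1)$-module statement via Frobenius reciprocity. A classical computation of Stong identifies
$$H^*(ko\langle n\rangle;\mathbb Z/2) \cong \mathcal A_2 \otimes_{\mathcal A(1)} N_n$$
for cyclic $\mathcal A(1)$-modules $N_n$ (copies of $\mathbb Z/2$ or of $\mathcal A(1)/\mathcal A(1)Sq^3$, suitably shifted, according to $n \bmod 8$). Hence the combined map of the theorem equals $\mathcal A_2 \otimes_{\mathcal A(1)} \Phi$ for an $\mathcal A(1)$-linear map
$$\Phi:\bigoplus_J N_{n_J} \longrightarrow H^*(MSpin;\mathbb Z/2)|_{\mathcal A(1)}$$
determined by sending each generator to the mod-$2$ Thom reduction of $\pi_J$. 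Since $\mathcal A_2 \otimes_{\mathcal A(1)} -$ is exact, preserves direct sums, and sends free $\mathcal A(1)$-modules to free $\mathcal A_2$-modules, it suffices to prove that $\Phi$ is injective with $\mathcal A(1)$-free cokernel.

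\textbf{Computing the target.} The next step is to describe $H^*(MSpin;\mathbb Z/2)$ as an $\mathcal A(1)$-module. The Thom isomorphism gives $H^*(MSpin;\mathbb Z/2) \cong U\cdot H^*(BSpin;\mathbb Z/2)$, and $H^*(BSpin;\mathbb Z/2)$ is the classical quotient of $\mathbb Z/2[w_2,w_3,\ldots]$ by the $\mathcal A_2$-orbit of $w_2$. An explicit analysis of the $\mathcal A(1)$-action on this polynomial quotient (essentially the structural content of \cite{AnBrPe2}) shows that $H^*(MSpin;\mathbb Z/2)|_{\mathcal A(1)}$ decomposes as a direct sum of copies of $\mathbb Z/2$, of $\mathcal A(1)/\mathcal A(1)Sq^3$ (shifted appropriately), and of free modules $\mathcal A(1)$. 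The key inputs are finite type, bounded-belowness, and the explicit Wu-formula action of $Sq^1, Sq^2$ on the polynomial generators.

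\textbf{Matching the summands.} What remains is the heart of the computation: the mod-$2$ reductions of the classes $\pi_J\cdot U$, for $J$ non-decreasing with $J_1 > 1$, should generate exactly the non-free cyclic summands in the previous decomposition. This is established in \cite{AnBrPe2} via a computation of the Pontrjagin characters of the $\pi_J$ in $H^{**}(BSpin;\mathbb Q)$, combined with a degree-by-degree Poincar\'e-series comparison counting, on one side, the admissible $J$ with $n_J$ fixed and, on the other, the bottoms of the non-free cyclic summands in the given degree. The character-level matching is the main obstacle --- it is what forces the specific indexing set $J_1 > 1$ non-decreasing to appear, and it relies on a careful analysis at the level of representation rings of spin groups. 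Once established, $\Phi$ is injective with image equal to the sub-$\mathcal A(1)$-module spanned by the non-free summands, and its cokernel is therefore a direct sum of free $\mathcal A(1)$-modules; applying $\mathcal A_2 \otimes_{\mathcal A(1)} -$ then yields the desired graded-free cokernel over $\mathcal A_2$.
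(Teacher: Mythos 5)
The paper does not prove this statement; it recalls it verbatim from Anderson--Brown--Peterson, so there is no ``paper's own proof'' to compare against. Your proposal is an informal outline of the original AnBrPe argument, but there are two issues worth flagging.

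First, the Frobenius-reciprocity reduction is stated imprecisely in a way that hides a genuine gap. By adjunction, the $\mathcal A_2$-linear map $f:\bigoplus_J \mathcal A_2 \otimes_{\mathcal A(1)} N_{n_J} \to H^*(MSpin)$ does correspond to an $\mathcal A(1)$-linear $\Phi:\bigoplus_J N_{n_J} \to H^*(MSpin)|_{\mathcal A(1)}$, but $f$ is \emph{not} of the form $\mathcal A_2 \otimes_{\mathcal A(1)}\Phi$ unless you already know that $H^*(MSpin)$ is an extended $\mathcal A(1)$-module $\mathcal A_2 \otimes_{\mathcal A(1)} M$ \emph{and} that $\Phi$ factors through $1\otimes M$. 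Without those facts, injectivity and $\mathcal A(1)$-freeness of the cokernel of $\Phi$ do not yield injectivity and $\mathcal A_2$-freeness of the cokernel of $f$ (for instance, the adjoint of $\mathcal A_2 /\!\!/ \mathcal A(1) \to \mathbb Z/2$ is an isomorphism of $\mathcal A(1)$-modules, yet the original $\mathcal A_2$-map is far from injective). The needed extended-module structure of $H^*(MSpin)$ is precisely one of the things the theorem establishes, so the logic is circular as written.

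Second, and more to the point, both the $\mathcal A(1)$-module decomposition of $H^*(MSpin)$ (step two) and the Pontryagin-character matching of the cyclic summands (step three) are simply attributed back to \cite{AnBrPe2} rather than argued. Since the theorem is itself a quotation from \cite{AnBrPe2}, this amounts to citing the result to prove itself; the sketch does not contain an independent argument for the key computational content, namely why the $\pi_J$ with $J$ non-decreasing and $J_1>1$ hit exactly the non-free cyclic summands and why the remainder is $\mathcal A(1)$-free.
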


Choosing a split over $\mathcal A_2$ of the arising short exact sequences and a homogeneous basis of the cokernel we obtain maps $z_i: MSpin \longrightarrow \Sigma^{n_i} H\mathbb Z/2$. 

\begin{cor}
Given $\theta_J$ and $z_i$ as above the induced map
$$\xymatrix{ \bigoplus_J H^*(ko\langle n_J\rangle, \mathbb Z/2)  \oplus \bigoplus_i H^*(\Sigma^{n_i} H\mathbb Z/2, \mathbb Z/2)  \ar[r] & H^*(MSpin, \mathbb Z/2)}$$
is an isomorphism.
\end{cor}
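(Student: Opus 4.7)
The plan is to deduce this directly from the preceding theorem together with the specific choice of the $z_i$. The theorem provides a short exact sequence of graded $\mathcal{A}_2$-modules
$$0 \longrightarrow \bigoplus_J H^*(ko\langle n_J\rangle, \mathbb{Z}/2) \stackrel{\theta^*}{\longrightarrow} H^*(MSpin, \mathbb{Z}/2) \longrightarrow C \longrightarrow 0,$$
where $C$ is graded-free over $\mathcal{A}_2$. By construction the maps $z_i$ were obtained by first choosing a homogeneous $\mathcal{A}_2$-basis $\{c_i\}$ of $C$ and then lifting each $c_i$ along the quotient to a class in $H^{n_i}(MSpin,\mathbb{Z}/2)$, which in turn classifies a map $z_i: MSpin \to sh^{n_i}H\mathbb{Z}/2$ by the universal property of Eilenberg-MacLane spectra.

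Next I would identify the left-hand summand on the $z$-side. Since $H^*(H\mathbb{Z}/2,\mathbb{Z}/2) = \mathcal{A}_2$ with the fundamental class as free generator, the cohomology $H^*(sh^{n_i}H\mathbb{Z}/2,\mathbb{Z}/2)$ is $sh^{n_i}\mathcal{A}_2$, freely generated over $\mathcal{A}_2$ by its bottom class $\iota_{n_i}$. By construction of $z_i$ we have $z_i^*(\iota_{n_i})$ equal to the chosen lift of $c_i$; hence the composite
$$\bigoplus_i H^*(sh^{n_i}H\mathbb{Z}/2,\mathbb{Z}/2) \longrightarrow H^*(MSpin,\mathbb{Z}/2) \longrightarrow C$$
sends the free generators of the left-hand side to the basis $\{c_i\}$ of $C$, so it is an isomorphism of graded-free $\mathcal{A}_2$-modules.

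This says precisely that $\bigoplus_i z_i^*$ provides an $\mathcal{A}_2$-linear splitting of the short exact sequence, so the combined map in the corollary is an isomorphism by the splitting lemma applied in the category of graded $\mathcal{A}_2$-modules. There is no real obstacle here beyond bookkeeping: the only point one has to be mildly careful about is that the $z_i$ were chosen to represent a basis of $C$ (not merely to hit some generating set), which is exactly what the statement of the theorem allows one to arrange, and that the freeness of $H^*(sh^{n_i}H\mathbb{Z}/2,\mathbb{Z}/2)$ on one generator matches the freeness of $C$.
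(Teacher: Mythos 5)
Your proof is correct and is exactly the argument the paper leaves implicit: the corollary is stated without proof precisely because, by design, the $z_i$ provide an $\mathcal A_2$-linear section of the projection onto the free cokernel, and the splitting lemma then gives the isomorphism. Your only bookkeeping addition is spelling out that a single split plus a homogeneous basis of $C$ is the same data as a family of lifts of basis elements extended $\mathcal A_2$-linearly (valid since $C$ is free), which matches the paper's phrasing.
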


Therefore the combined map 
\[(\theta_J,z_i)_{J,i} \colon MSpin \rightarrow \bigvee_J ko\langle n_J\rangle \vee \bigvee_i \Sigma^{n_i} H\mathbb Z/2\]
is an equivalence after $2$-completion (here the infinite coproducts are also products as the spectra involved grow higher connected with larger indices). Since the spectra on either side are of finite type we find:

\begin{cor}[Anderson-Brown-Peterson-splitting]\label{abp}
The transformation $$\MSpin_*(-) \longrightarrow \bigoplus_J ko\langle n_J \rangle_*(-) \oplus \bigoplus_i H_{*-n_i}(-,\mathbb Z/2)$$ given by a choice of $\theta_J$ and $z_i$ as above, is an isomorphism after localisation at $2$.
\end{cor}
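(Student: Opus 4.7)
The plan is to promote the splitting at the level of mod $2$ and rational cohomology (the previous corollary and proposition) to a $2$-local equivalence of spectra, and thereby to an isomorphism of the associated homology theories after localisation at $2$.

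First I would assemble the chosen maps $\theta_J: MSpin \to ko\langle n_J\rangle$ and $z_i: MSpin \to sh^{n_i}H\mathbb{Z}/2$ into a single map of spectra
$$
f\colon MSpin \longrightarrow T := \Bigl(\bigvee_J ko\langle n_J\rangle\Bigr) \vee \Bigl(\bigvee_i sh^{n_i}H\mathbb{Z}/2\Bigr).
$$
Because the connectivities $n_J$ and $n_i$ tend to infinity, in each fixed degree only finitely many wedge summands contribute, so this wedge agrees with the corresponding product in each homotopical degree; in particular all spectra involved are connective and of finite type. Consequently the direct sum appearing in the statement coincides, in each degree, with the homotopy groups of $T$ smashed with $X_+$, so the transformation really is represented by $f$ (externally smashed with $X_+$).

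Second, by the preceding corollary $f^*$ is an isomorphism on $H^*(-;\mathbb{Z}/2)$. Since $T$ and $MSpin$ are of finite type, universal coefficients turn this into the statement that $f_*$ is an isomorphism on mod $2$ homology; the rational proposition similarly gives a rational homology isomorphism. Now I would argue that $f$ becomes an equivalence after localisation at $2$ by a standard Serre-class argument on the cofibre $C = \mathrm{cofib}(f)$. The hypotheses give $H_*(C;\mathbb{Z}/2) = 0$ and $H_*(C;\mathbb{Q}) = 0$; a short universal-coefficient computation then shows that $H_*(C;\mathbb{Z})$ is uniquely $2$-divisible, i.e.\ a $\mathbb{Z}[1/2]$-module, whence $H_*(C_{(2)};\mathbb{Z}) = 0$. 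Since $C_{(2)}$ is bounded below, the Hurewicz theorem for spectra applied inductively yields $C_{(2)} \simeq *$, and therefore $f_{(2)}$ is an equivalence.

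Finally, a $2$-local equivalence of representing spectra translates into the claimed natural isomorphism of homology theories after localisation at $2$, by applying $\pi_*(-\wedge X_+)$ and again invoking that wedges and products agree in each degree on the target side. The real work has already been done in the mod $2$ and rational cohomology statements; the main obstacle for \emph{this} corollary is purely bookkeeping, namely checking the finite-type hypothesis and handling the wedge versus product distinction via the growth of $n_J$ and $n_i$, after which Serre class theory delivers the 2-local conclusion automatically.
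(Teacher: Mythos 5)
Your proposal is correct and is essentially the argument the paper has in mind: the paper simply says ``Since everything in sight is of finite type, one can now apply Serre-class theory'' to pass from the mod $2$ and rational cohomology isomorphisms to a $2$-local equivalence of spectra, and your write-up (assembling $f$, noting wedges agree with products degreewise because $n_J, n_i \to \infty$, showing the cofibre has vanishing $2$-local homology, and invoking Hurewicz) is exactly a detailed unfolding of that one-line appeal.
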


To obtain a similar splitting for the twisted spin cobordism spectrum, note that, given such a choice of $z_i$, we obtain (upon further choices) homotopy classes
$$\MSpinK \longrightarrow K \times \Sigma^{n_i} H\mathbb Z/2$$
as follows: Recall the functor $\Theta: \mathcal S_K \rightarrow \mathcal S$, that collapses the base section and that $\Theta(\MSpinK) = MO$. One then is tempted to proceed as follows: It is well known that the canonical map
$$H^*(MO,\mathbb Z/2) \longrightarrow H^*(MSpin,\mathbb Z/2)$$
is a surjection, so pick representatives $\overline{z_i}: MO \rightarrow \Sigma^{n_i} H\mathbb Z/2$ of inverse images of the $z_i$.
Now $\Theta$ is left adjoint to $- \times K$ (also on the homotopy category), whence these should correspond to maps $\MSpinK \rightarrow \Sigma^{n_i} H\mathbb Z/2 \times K$ as claimed. However, we do not know whether our spectrum $\MSpinK$ is cofibrant, i.e.~it is not clear that $\Theta(\MSpinK) \cong MO$ in the homotopy category. To verify that this is nevertheless the case we need to peek into the internal workings of the model structures on parametrised spectra once more. We can cofibrantly resolve $\MSpinK$ in the level model structure, since it shares cofibrations with the stable structure and has a stronger notion of weak equivalence. However, as we observed in the proof of \ref{norepl} the base sections in our spectrum $\MSpinK$ are cofibrations and so we certainly obtain a levelwise homology equivalence after applying $\Theta$ to the chosen resolution, and hence a weak equivalence. \\

Note that the parametrised spectrum $K \times H\mathbb Z/2$ represents the twisted homology theory obtained by ignoring the twist and taking singular mod $2$ homology, i.e. ~if $X$ is a space over $K$ we have $(K \times H\mathbb Z/2)^*(X) = H^*(X,\mathbb Z/2)$. 
\begin{cor}\label{genspli}
If $\overline{\theta}_J : \MSpinK \longrightarrow \koK\langle n_J \rangle$ corresponds to a lift of $\overline{\pi}_J \in KO^0(BO)$ in $ko\langle n_j \rangle^0(BO)$ and $\overline{z_i}: \MSpinK \rightarrow K \times \Sigma^{n_i} H\mathbb Z/2$ corresponds to a class in $H^{n_i}(MO,\mathbb Z/2)$ that restricts to $z_i \in H^{n_i}(MSpin, \mathbb Z/2)$, then the induced map
$${\big((\overline{\theta}_J,\overline{z}_i)_{J,i}\big)}_* \colon \MSpin_*(-;-) \longrightarrow \bigoplus_J ko\langle n_J \rangle_*(-;-) \oplus \bigoplus_i H_{*+n_i}(-,\mathbb Z/2)$$
is an isomorphism of twisted homology theories after localising at $2$. 
\end{cor}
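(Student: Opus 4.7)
The strategy is to invoke the comparison theorem \ref{isocoeff} in order to reduce the assertion to a fibrewise statement that specialises to the classical Anderson--Brown--Peterson splitting \ref{abp}. Both source and target of the transformation are additive and singular twisted homology theories, being represented by parametrised spectra, and these axioms persist under $2$-localisation. By \ref{isocoeff} (applied with $n$ arbitrarily large, so that even $(-1)$-connected pairs are covered), a transformation of such theories is an isomorphism on every pair as soon as it induces an isomorphism of the coefficient local systems on $K$, equivalently, of homotopy groups of fibre spectra at each point $k\in K$.

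Next, identify the relevant fibre spectra. Because $BO(n)\rightarrow K$ was set up to represent $(w_1,w_2)$ with fibre $BSpin(n)$, the fibre spectrum of $M_2O$ at any $k\in K$ is $MSpin'\simeq MSpin$ as in \ref{twispin}. The fibres of $k_2o\langle n_J\rangle$ and of $K\times sh^{n_i}H\mathbb Z/2$ are the obvious $ko\langle n_J\rangle$ and $sh^{n_i}H\mathbb Z/2$. Under these identifications the fibrewise restriction of $\overline\theta_J$ is, by naturality of Thom classes (using \ref{morethom}), determined by the restriction of the chosen lift of $\overline\pi_J$ from $BO$ to $BSpin$; since $\overline\pi_J$ equals $\pi_J$ on $BSO$ by construction, this is a $\theta_J$ exactly as in the classical Anderson--Brown--Peterson paper. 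The fibrewise restriction of $\hat z_i$, read off from the $\Theta\dashv(-\times K)$ adjunction as in the paragraph preceding the statement, is the composition $MSpin\rightarrow MO\rightarrow sh^{n_i}H\mathbb Z/2$, which by assumption on $\hat z_i$ represents $z_i$.

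Therefore, fibrewise, the transformation coincides with a choice of classical Anderson--Brown--Peterson map, and hence is a $2$-local weak equivalence by \ref{abp}. Naturality of the constructions ensures that these fibrewise equivalences intertwine the monodromy action of $\pi_1(K)$ automatically, so one obtains an isomorphism of $2$-local coefficient systems and concludes via \ref{isocoeff}. The principal obstacle is careful bookkeeping rather than any new idea: one must verify that the fibrewise restrictions of $\overline\theta_J$ and $\hat z_i$ do form a legal instance of the data entering the classical splitting, and handle the non-cofibrancy of $M_2O$ when passing from $M_2O$-level maps to $MO$-level cohomology via the $\Theta$-adjunction (the latter exactly as already addressed in the preamble to the statement).
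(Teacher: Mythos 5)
Your proof is correct and follows exactly the route the paper takes: it applies the comparison theorem \ref{isocoeff} to reduce the claim to the coefficient systems, identifies the fibre spectrum of $M_2O$ at each point of $K$ with $MSpin$ (non-canonically, via a choice of point in the fibre of $E\PGl$), and observes that under these identifications the maps $\overline\theta_J$ and $\hat z_i$ restrict to a legitimate instance of the classical Anderson--Brown--Peterson data, so \ref{abp} gives the $2$-local fibrewise equivalence. You spell out in detail what the paper compresses into a single sentence, but the strategy and the key ingredients are the same.
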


So again the combined map
\[(\overline{\theta}_J,\overline{z}_i)_{J,i} \colon MSpin_K \rightarrow \bigvee_J ko_K\langle n_J\rangle \vee \bigvee_i K \times \Sigma^{n_i} H\mathbb Z/2\]
is a $2$-local equivalence.

\begin{proof}
To have an isomorphism in parametrised homology theory corresponds to having a weak equivalence of parametrised spectra. To check this it suffices to verify, that the statement holds for the fibre over each point $k \in K$ and as before, for every point the claim reduces to the classical Anderson-Brown-Peterson splitting, albeit in a non-canonical way.
\end{proof}

\section{The cohomology of twisted $Spin$-cobordism}\label{seccoh}

\subsection{The twisted Steenrod algebra.} Using our generalisation of the Ander- son-Brown-Peterson splitting we shall now set out to compute the $\mathbb Z/2$-cohomology of the twisted $Spin$-cobordism spectrum as a module over the twisted Steenrod algebra, which we denote by $\underline{\mathcal A}_2$. Before doing so we shall therefore describe $\underline{\mathcal A}_2$. For simplicity we shall from now on suppress the coefficients $\mathbb Z/2$ and the related subscript $2$ at the Steenrod algebra from our notation.

\begin{defi} \label{twisted-Steenrod-algebra}
The graded algebra $\underline{\mathcal A}$ is defined to be the family of natural transformations $H^*(-) \rightarrow H^*(-)$ commuting with the connecting transformation, where $H^*$ is regarded as a functor $Top_K \rightarrow Ab$ by ignoring the twist datum.
\end{defi}

As in the untwisted setting $\underline{\mathcal A}$ is an algebra under composition and carries a canonical coproduct coming from the multiplicativity of $H^*$, making it a co-commutative Hopf algebra. Representability easily yields:

\begin{prop}
The inclusions $H^*(K) \rightarrow \underline{\mathcal A}$ (acting via multiplication) and $\mathcal A \rightarrow \underline{\mathcal A}$ induce an isomorphism 
$$H^*(K) \otimes \mathcal A \cong \underline{\mathcal A}$$
of vector spaces and coalgebras, but not of algebras.
\end{prop}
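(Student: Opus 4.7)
The plan is to invoke representability in the parametrised setting. Because $\operatorname{Aut}(\mathbb Z/2)$ is trivial, the local coefficient system cohomology theory $H^*(-,-;\mathbb Z/2)$ over $K$ is represented by the trivial parametrised Eilenberg-MacLane spectrum $H_K := K \times H\mathbb Z/2 \to K$; that is, $H^n(X,A,\zeta;\mathbb Z/2) = [\operatorname{cofib}_K(X,A), \Sigma^n H_K]_K$. A degree-$k$ natural transformation commuting with the connecting homomorphism is then, by Yoneda, nothing but a homotopy class of maps $H_K \to \Sigma^k H_K$ over $K$, and the totality of these is the parametrised cohomology $H^*_K(H_K;\mathbb Z/2)$. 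Since the parametrised spectrum in question is the trivial one and the coefficients admit no non-trivial local system, this coincides with the ordinary cohomology of the total space $K \times H\mathbb Z/2$.

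Next I would apply Künneth: since we work over a field and $H^*(H\mathbb Z/2;\mathbb Z/2) = \mathcal A$ is of finite type in each degree, the cross product gives an isomorphism
\[
H^*(K) \otimes \mathcal A \;\xrightarrow{\;\cong\;}\; H^*(K \times H\mathbb Z/2;\mathbb Z/2) \;=\; \underline{\mathcal A}.
\]
Unwinding the representability isomorphism, the image of $\lambda \otimes 1$ is precisely the operation $x \mapsto \zeta^*(\lambda) \smile x$, and the image of $1 \otimes \sigma$ is the untwisted Steenrod operation $\sigma$. So the two inclusions in the statement assemble to this Künneth isomorphism, which proves the vector space assertion.

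For the coalgebra claim I would argue that both coproducts are forced by multiplicativity: for $\alpha \in \underline{\mathcal A}$ the Cartan-type relation $\alpha(xy) = \sum \alpha'(x)\alpha''(y)$ determines $\Delta\alpha$ uniquely. Restricted to $\mathcal A$ this is the standard Cartan formula, i.e. the Milnor coproduct. Restricted to $H^*(K)$, acting via $\zeta^*$ followed by $\smile$, it is governed by the Hopf coproduct of $H^*(K)$ induced from the topological monoid structure on $K$ provided by Lemma \ref{twimul}. Multiplicativity of $\zeta^*$ and associativity of $\smile$ show that on products of the form $(\lambda \otimes 1)(1 \otimes \sigma)$ the two coproducts agree with the tensor coproduct, so the Künneth isomorphism is automatically a coalgebra map.

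Finally, it is not an algebra isomorphism because in $\underline{\mathcal A}$ one has, by the Cartan formula applied to $\operatorname{Sq}^n(\zeta^*(\lambda) \smile x)$, the commutation rule
\[
\operatorname{Sq}^n \circ \lambda \;=\; \sum_{i+j=n} \operatorname{Sq}^i(\lambda)\cdot \operatorname{Sq}^j,
\]
whereas in the tensor product algebra $H^*(K) \otimes \mathcal A$ the generators $\lambda \otimes 1$ and $1 \otimes \operatorname{Sq}^n$ commute strictly. Since the Steenrod action on $H^*(K)$ is highly non-trivial (for instance $\operatorname{Sq}^1 \iota_1 = \iota_1^2$), the two algebra structures genuinely differ. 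The only step requiring a bit of care is justifying the representability of $\underline{\mathcal A}$ by $H_K$ in May--Sigurdsson's framework, and in particular that commutation with $\delta$ is automatic from the parametrised representability; but given the axiomatics of Section 3 this is routine.
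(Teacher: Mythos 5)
Your proposal is correct and follows essentially the same route as the paper: representability reduces the computation of $\underline{\mathcal A}$ to $\lim_n H^{*+n}(K \times K(\mathbb Z/2,n), K \times *)$, which K\"unneth identifies with $H^*(K)\otimes\mathcal A$, and the coalgebra claim is read off from the componentwise multiplication on $K\times H\mathbb Z/2$ (which your Cartan-formula argument spells out). Your explicit commutation rule for the failure of multiplicativity is exactly the content of the proposition the paper's proof defers to.
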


\begin{proof}
What we have to compute is $\lim_n H^{*+n}(K \times K(\mathbb Z/2,n), K \times *)$, which by the K\"unneth-formula is isomorphic to $H^*(K) \otimes \mathcal A$, and since the multiplication of $K \times H\mathbb Z/2$ is componentwise, this is an isomorphism of coalgebras. The final statement follows from the next proposition.
\end{proof}

The multiplication, however, is also easily described in terms of the isomorphism above.

\begin{prop}
For $k,l \in H^*(K)$ and $a,b \in \mathcal A$, with $\Delta(a) = \sum_i a'_i \otimes a''_i$ we have
$$(k \otimes a) \cdot (l \otimes b) = \sum_i (k \cup a'_i(l)) \otimes (a''_i \circ b)$$
\end{prop}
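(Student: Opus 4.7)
The plan is to first identify concretely how an element $k \otimes a \in H^*(K) \otimes \mathcal{A}$ acts as a natural transformation on a twisted cohomology class $x \in H^n(X,A,\zeta)$. Under the K\"unneth identification $H^*(K) \otimes H^*(K(\mathbb{Z}/2,n),*) \cong H^{*+n}(K \times K(\mathbb{Z}/2,n), K \times *)$ used in the proof of the preceding proposition, $k \otimes a$ corresponds to the external product $k \times a(\iota_n)$, where $\iota_n$ is the fundamental class. Since a class $x \in H^n(X,A,\zeta)$ is represented by a map $(X,A) \to (K \times K(\mathbb{Z}/2,n), K \times *)$ over $K$, naturality and the behaviour of the cup product under external products give the concrete formula
$$(k \otimes a)(x) \;=\; \zeta^*(k) \cup a(x),$$
where $a(x)$ is the image of $x$ under the $\mathcal{A}$-action on twisted cohomology (which exists because $\mathcal{A}$ embeds into $\underline{\mathcal{A}}$) and the cup product is the $H^*(X)$-module structure on $H^*(X,A,\zeta)$.

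With this concrete description in hand, the proposition reduces to a direct computation. Composing the operations and evaluating at an arbitrary $x \in H^*(X,A,\zeta)$ gives
\begin{align*}
\bigl((k \otimes a)(l \otimes b)\bigr)(x)
 &= \zeta^*(k) \cup a\bigl(\zeta^*(l) \cup b(x)\bigr) \\
 &= \zeta^*(k) \cup \sum_i a'_i(\zeta^*(l)) \cup a''_i(b(x)),
\end{align*}
where in the second line I use the Cartan formula applied to $a$. Naturality of Steenrod operations then rewrites $a'_i(\zeta^*(l))$ as $\zeta^*(a'_i(l))$, and assembling the result yields
$$\sum_i \zeta^*\bigl(k \cup a'_i(l)\bigr) \cup (a''_i \circ b)(x) \;=\; \sum_i \bigl(k \cup a'_i(l) \,\otimes\, a''_i \circ b\bigr)(x),$$
which is exactly the claimed formula. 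Since this holds for every $x$ in every twisted cohomology group, it holds as an equality in $\underline{\mathcal A}$.

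The only real obstacle is the initial identification of the action of $k \otimes a$: one must verify that under the K\"unneth isomorphism the operation $a \in \mathcal{A}$ extends to twisted cohomology via its action on the $K(\mathbb{Z}/2,n)$-factor, and that the module structure by $H^*(K)$ is indeed given by pullback-and-cup. Both facts are essentially built into the construction of the previous proposition, but it is worth making them explicit before unfolding the Cartan formula. After that, the Cartan formula legitimately applies because the product $\zeta^*(l) \cup b(x)$ is a product of (untwisted) cohomology with a twisted class, i.e.\ the structure pairing, for which the Cartan relation is inherited from the unparametrised case via the representing spectrum map $H\mathbb{Z}/2 \wedge (K \times H\mathbb{Z}/2) \to K \times H\mathbb{Z}/2$.
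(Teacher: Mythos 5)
Your argument is correct and follows the same route as the paper: after identifying the action of $k\otimes a$ on a twisted class $x$ as $\zeta^*(k)\cup a(x)$, you unfold $(k\otimes a)((l\otimes b)x)$ via the Cartan formula and naturality of Steenrod operations and read off the product. The paper's proof is precisely this computation, though it leaves the preliminary identification of the action and the validity of the Cartan formula in the twisted module setting implicit, both of which you sensibly spell out.
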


\begin{proof}
Given a pair $(X,A)$ with twist $\zeta$, we find for every $x \in H^*(X,A,\zeta)$
\begin{align*} 
                 (k \otimes a) \big((l \otimes b) x)\big) &= (k \otimes a) (\zeta^*l \cup b(x)) \\
							     &= \zeta^*k \cup a(\zeta^*l \cup b(x)) \\
							     &= \sum_i \zeta^*k \cup a'_i(\zeta^*l) \cup a''_i(b(x)) \\
							     &= \sum_i \zeta^*(k \cup a'_i(l)) \cup (a''_i \circ b)(x) \\
							     &= \sum_i \big((k \cup a'_i(l)) \otimes (a''_i \circ b)\big) x
\end{align*}
\end{proof}

\begin{rem}
From these formulas we see that $\underline{\mathcal A}$ is precisely the semidirect product $H^*(K) \sd \mathcal A$ of ${\mathcal A}$ acting on $H^*(K)$, see \cite{MaPe}. As usual we conclude that the inclusions $H^*(K) \hookrightarrow \underline{\mathcal A}$ and $\mathcal A \hookrightarrow \underline{\mathcal A}$ are Hopf algebra maps, whereas of the projection maps $\underline{\mathcal A} \rightarrow \mathcal A$ (killing $H^+(K) \otimes \mathcal A$) and $\underline{\mathcal A} \rightarrow H^*(K)$ 
(killing $H^*(K) \otimes \mathcal A_+$) only the first is multiplicative, while both are comultiplicative (since both ${\mathcal A}$ and $H^*(K)$ are cocommutative); here $H^+(K)$ and $A_+$ denote positive degree elements.
\end{rem}

To describe our results for $H^*(\MSpinK)$ we will construct a nontrivial automorphism of a subalgebra of $\underline{{\mathcal A}(1)}$, that emulates the changes under Thom isomorphisms: 
To this end let ${\mathcal A}(1)$ denote the subalgebra of $A$ generated by $Sq^1$ and $Sq^2$, which indeed is a sub-Hopf-algebra. 
Similarly, let $\underline{\mathcal A(1)}$ denote the subalgebra of $\underline{{\mathcal A}}$ generated by ${\mathcal A}(1)$ and $H^*(K)$; it is also readily checked to be a sub-Hopf-algebra of $\underline{{\mathcal A}}$.
Now note that given the spectrum $\MSpinK$, we can apply the Thom isomorphism $H^*(\MSpinK) = H^*(MO) \cong H^*(BO)$, which (by our convention from right after lemma \ref{fibtho}) is $H^*(K)$-linear for the action of $H^*(K)$ on $H^*(BO)$ given via the first and second Stiefel-Whitney class (considered as a map $BO \rightarrow K$).
The module structure over the Steenrod algebra, however, changes. By Thom's formula $Sq^i u = w_i \cup u$, the change in the action over ${\mathcal A}(1)$ can be described by an automorphism $\psi$ of $\underline{{\mathcal A}(1)}$. 
It needs to satisfy
\begin{align*}
\psi(1 \otimes Sq^1) &= 1 \otimes Sq^1 + \iota_1 \otimes 1 \\
\psi(1 \otimes Sq^2) &= 1 \otimes Sq^2 + \iota_1 \otimes Sq^1 + \iota_2 \otimes 1 \\
\psi(k \otimes 1)    &= k \otimes 1.
\end{align*}

To verify that this stipulation really defines an automorphism of $\underline{{\mathcal A}(1)}$ it turns out to be more convenient to work with the inverse of $\psi$, which we denote by $\varphi$. It is given by
\begin{align*}
\varphi(Sq^1) &= 1 \otimes Sq^1 + \iota_1 \otimes 1 \\
\varphi(Sq^2) &= 1 \otimes Sq^2 + \iota_1 \otimes Sq^1 + \iota_1^2 \otimes 1 + \iota_2 \otimes 1.
\end{align*}

\begin{lem}\label{twiemb}
The above stipulation indeed defines a unique homomorphism $\varphi: {\mathcal A}(1) \rightarrow \underline{{\mathcal A}(1)}$ of algebras. It is a morphism of Hopf algebras.
\end{lem}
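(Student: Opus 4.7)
The approach is to exploit the (well-known) presentation of $A(1)$ as the algebra on generators $Sq^1$ and $Sq^2$ modulo the two Adem-style relations $(Sq^1)^2 = 0$ and $(Sq^2)^2 = Sq^1 Sq^2 Sq^1$; any longer relation in $A(1)$ is a formal consequence of these two. Under this presentation, establishing $\varphi$ as a well-defined algebra map amounts to verifying these two relations for the proposed images in $\underline{\mathcal A(1)}$, and uniqueness is automatic since $Sq^1, Sq^2$ generate.

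The relation $\varphi(Sq^1)^2 = 0$ is a three-line computation using the multiplication formula of the preceding proposition together with $(Sq^1)^2 = 0$ in $A(1)$ and $Sq^1 \iota_1 = \iota_1^2$: the four terms in the expanded square collapse to $0 + \iota_1^2 \otimes 1 + \iota_1 \otimes Sq^1 + \iota_1 \otimes Sq^1 + \iota_1^2 \otimes 1 = 0$ over $\mathbb F_2$. The relation $\varphi(Sq^2)^2 = \varphi(Sq^1)\varphi(Sq^2)\varphi(Sq^1)$ is of the same flavour but more involved: one expands both sides using the coproduct $\Delta Sq^2 = Sq^2 \otimes 1 + Sq^1 \otimes Sq^1 + 1 \otimes Sq^2$, the relation $(Sq^2)^2 = Sq^1 Sq^2 Sq^1$ in $A(1)$ itself, and the Steenrod actions on $H^*(K)$, namely $Sq^1 \iota_1 = \iota_1^2$, $Sq^2 \iota_1 = 0$, $Sq^2 \iota_2 = \iota_2^2$ (with the value of $Sq^1 \iota_2$ immaterial, as the two occurrences cancel). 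The $\iota_1^2 \otimes 1$ summand in $\varphi(Sq^2)$, which at first sight looks superfluous next to $\iota_2 \otimes 1$, is present precisely so that these cancellations go through.

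For the Hopf-algebra assertion it suffices to check $\Delta \varphi(Sq^i) = (\varphi \otimes \varphi)\Delta(Sq^i)$ for $i = 1, 2$, again because $Sq^1, Sq^2$ generate. The $Sq^1$ case is immediate since $1 \otimes Sq^1$ and $\iota_1 \otimes 1$ are both primitive in $\underline{\mathcal A(1)}$. The $Sq^2$ case rests on the crucial observation that $\iota_2$ is \emph{not} primitive: since by Lemma \ref{twimul} the multiplication on $K$ is the twisted one, $(a,b)\cdot(c,d) = (a+c, b+d+ac)$, and the coproduct on $\underline{\mathcal A}$ reflects cup-product multiplicativity pulled back along this monoid structure, one obtains
\[ \Delta(\iota_2 \otimes 1) = (\iota_2 \otimes 1) \otimes 1 + (\iota_1 \otimes 1) \otimes (\iota_1 \otimes 1) + 1 \otimes (\iota_2 \otimes 1). \]
This extra cross term conspires with the primitive coproducts of $1 \otimes Sq^2$, $\iota_1 \otimes Sq^1$ and $\iota_1^2 \otimes 1$ to assemble into exactly $\varphi(Sq^2) \otimes 1 + \varphi(Sq^1) \otimes \varphi(Sq^1) + 1 \otimes \varphi(Sq^2)$.

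The main obstacle is the bookkeeping in the Adem-relation verification for $Sq^2$; conceptually, however, the whole statement becomes transparent once one recognises $\varphi$ as the algebraic shadow of the Thom isomorphism for the universal bundle over $BO$ along $(w_1, w_2):BO \to K$, the formulas simply spelling out how Steenrod operations on $H^*(M_2O)$ translate to $H^*(BO)$. An alternative route avoiding the direct calculation is to first show that the formulas for $\psi$ from the preceding discussion define an algebra \emph{endomorphism} of $\underline{\mathcal A(1)}$ that is triangular with identity diagonal with respect to the evident filtration by powers of the augmentation ideal of $H^*(K)$; such $\psi$ is automatically a Hopf algebra automorphism, and then $\varphi = \psi^{-1}$ inherits both properties.
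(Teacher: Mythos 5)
Your proposal takes essentially the same route as the paper's appendix proof: verify the two Adem-style relations $(Sq^1)^2 = 0$ and $(Sq^2)^2 = Sq^1 Sq^2 Sq^1$ for the proposed images by direct computation, then check the coproduct on the generators, using that $\iota_1$ is primitive and $\iota_2$ is not (with $\Delta(\iota_2) = \iota_2\otimes 1 + \iota_1\otimes\iota_1 + 1\otimes\iota_2$ from \ref{twimul}). You correctly identify the Steenrod-action facts $Sq^1\iota_1 = \iota_1^2$, $Sq^2\iota_1 = 0$, $Sq^2\iota_2 = \iota_2^2$ that drive the bookkeeping, and the observation that $Sq^1\iota_2$ never needs to be evaluated. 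Two small imprecisions are worth noting. First, the terms involving $Sq^1(\iota_2)$ do not cancel \emph{within} either side; rather, $Sq^1(\iota_2)\otimes Sq^1$ and $\iota_1 Sq^1(\iota_2)\otimes 1$ show up identically on both sides of the second relation, so their common value is irrelevant to the comparison. Second, it is not quite right that the $\iota_1^2\otimes 1$ summand in $\varphi(Sq^2)$ is present ``precisely so that these cancellations go through'': the formula for $\psi$ (which lacks this summand) also defines an algebra homomorphism, being the inverse of $\varphi$. The $\iota_1^2\otimes 1$ term is there because $\varphi = \psi^{-1}$ is the particular automorphism that matches the change of Thom isomorphism, not because it is the unique algebraically consistent choice in that degree. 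Neither point affects the validity of your argument, and the alternative ``triangular with identity diagonal'' route you sketch for establishing $\psi$ as an automorphism is also sound, though it replaces one Adem verification with another of comparable length.
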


\begin{proof}
This is a lengthy but straight forward computation, since a presentation for $\mathcal A(1)$ is given by $(Sq^1)^2 = 0, Sq^1 Sq^2 Sq^1 = (Sq^2)^2$. For completeness' sake we give it in Appendix A.
\end{proof}

\begin{lem}\label{twimod}
Extending $\varphi$ by the identity on $H^*(K)$ produces a Hopf algebra automorphism of $\underline{A(1)}$ of order 4. 
\end{lem}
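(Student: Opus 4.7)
The plan is to define an endomorphism $\tilde\varphi$ of $\underline{\mathcal A(1)}$ by extending $\varphi$ to be the identity on $H^*(K)$, and then to verify in turn that $\tilde\varphi$ is a well-defined algebra map, a coalgebra map, and bijective. Concretely, we prescribe $\tilde\varphi(Sq^i) = \varphi(Sq^i)$ for $i=1,2$ and $\tilde\varphi(k)=k$ for $k \in H^*(K)$, so that $\tilde\varphi$ is determined on algebra generators of $\underline{\mathcal A(1)}$.

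\textbf{Well-definedness as an algebra map.} Using the semidirect-product structure from the two propositions just above \ref{twiemb}, $\underline{\mathcal A(1)}$ admits a presentation with generators $Sq^1, Sq^2$ together with polynomial generators of $H^*(K)$ (a suitable such set can be taken to start with $\iota_1, \iota_2$, the remaining generators being recoverable through the commutation relations below), subject to: the defining relations $(Sq^1)^2 = 0$ and $Sq^1 Sq^2 Sq^1 = (Sq^2)^2$ of $\mathcal A(1)$; graded commutativity among the $\iota$'s; and the commutation relations $Sq^i \cdot \iota_j = \sum_{\ell} Sq^{i-\ell}(\iota_j) \cdot Sq^{\ell}$ extracted from the multiplication formula. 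The first set of relations is preserved by $\tilde\varphi$ precisely by lemma \ref{twiemb}; the second trivially, since $\tilde\varphi$ fixes $H^*(K)$. The third comes down to four finite computations, one for each $(i,j) \in \{1,2\}^2$, each of which expands both sides in $\underline{\mathcal A(1)}$ and cancels the matched $\iota$-monomials modulo $2$.

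\textbf{Coalgebra structure and bijectivity.} The coalgebra check is then free: both $\Delta \circ \tilde\varphi$ and $(\tilde\varphi \otimes \tilde\varphi) \circ \Delta$ are algebra homomorphisms $\underline{\mathcal A(1)} \to \underline{\mathcal A(1)} \otimes \underline{\mathcal A(1)}$, so they agree as soon as they agree on the algebra generators, and that is either trivial (on $H^*(K)$) or part of lemma \ref{twiemb} (on $\mathcal A(1)$). For bijectivity, observe that the image of $\tilde\varphi$ contains $\iota_1, \iota_2$ (fixed) together with $Sq^1 = \tilde\varphi(Sq^1) - \iota_1$ and $Sq^2 = \tilde\varphi(Sq^2) - \iota_1 Sq^1 - \iota_1^2 - \iota_2$, hence is all of $\underline{\mathcal A(1)}$; since $\tilde\varphi$ preserves the cohomological grading and each graded component $H^*(K)^p \otimes \mathcal A(1)^q$ is finite-dimensional (as $H^*(K)$ is of finite type and $\mathcal A(1)$ is $8$-dimensional), a graded surjective endomorphism is automatically bijective.

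\textbf{Main obstacle.} Everything apart from the commutation step is essentially bookkeeping, and even that step is only a finite calculation; however, it is the sole place where one must genuinely use that $\varphi$ was engineered with the corrective $\iota_1^2$ summand, rather than the more naive $\psi$. The key cancellations modulo $2$ depend on precisely this term, which is the algebraic shadow of the fact that $\psi$ describes the change of Steenrod-module structure under the Thom isomorphism $H^*(M_2O) \cong H^*(BO)$, so that $\varphi = \psi^{-1}$ is the map that must intertwine with the identity on $H^*(K)$ to give a self-map of the twisted algebra.
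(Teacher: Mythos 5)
Your overall strategy — verify the semidirect-product compatibility and then deduce comultiplicativity and bijectivity as corollaries — matches the paper in outline, but the well-definedness step has a genuine gap. You claim that $\iota_1, \iota_2, Sq^1, Sq^2$ generate $\underline{\mathcal A(1)}$, the remaining polynomial generators of $H^*(K)$ being ``recoverable through the commutation relations,'' so that four checks suffice. This is false: $H^*(K(\mathbb Z/2,2);\mathbb Z/2)$ is polynomial on $\iota_2, Sq^1\iota_2, Sq^2Sq^1\iota_2, Sq^4Sq^2Sq^1\iota_2,\ldots$, and the class $Sq^4Sq^2Sq^1\iota_2 \otimes 1$ does not lie in the subalgebra $B \subseteq \underline{\mathcal A(1)}$ generated by your four elements. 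To see this, act on $1 \in H^0(K)$ via the $\underline{\mathcal A(1)}$-module $H^*(K)$ (the identity twist on $K$): the only indecomposables of $H^*(K)$ reached from $\iota_1, \iota_2$ by repeatedly multiplying by $\iota_j$ and applying $Sq^1, Sq^2$ are $\iota_1, \iota_2, Sq^1\iota_2, Sq^2Sq^1\iota_2$ — one has $Sq^1Sq^2Sq^1\iota_2 = (Sq^2)^2\iota_2 = Sq^2(\iota_2^2) = (Sq^1\iota_2)^2$, which is decomposable, and $Sq^2Sq^2Sq^1\iota_2 = Sq^3Sq^1Sq^1\iota_2 = 0$ — and never $Sq^4Sq^2Sq^1\iota_2$, because $Sq^4 \notin \mathcal A(1)$. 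So the proposed generating set does not generate, the claimed presentation is not one, and the four finite commutation checks cannot establish well-definedness.

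The paper sidesteps this by verifying the compatibility identity $\varphi(\alpha)(l \otimes 1) = \sum_i (\alpha'_i(l) \otimes 1)\varphi(\alpha''_i)$ for $\alpha \in \{Sq^1, Sq^2\}$ and an \emph{arbitrary, unspecified} $l \in H^*(K)$, using only the multiplication formula and commutativity of $H^*(K)$; this is exactly the universal-property condition for a map out of the semidirect product, and it propagates over products in $\alpha$, so checking it on the two generators of $\mathcal A(1)$ suffices. If you replace your four concrete checks by this short symbolic calculation with $l$ kept abstract, your argument closes. The remaining parts of your proposal are then fine — provided you note that surjectivity holds because $\tilde\varphi$ fixes \emph{all} of $H^*(K)$, not merely $\iota_1, \iota_2$ (the same issue again), together with $Sq^1, Sq^2$ being in the image, and invoke graded finiteness for injectivity as you do.
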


\begin{proof}
 To make the extension well-defined, there is again a relation to be checked and we do this in Appendix A as well. 
 The extension is an isomorphism, since it is for example $H^*(K)$-linear and induces the identity on $H^*(K)$-indecomposables. That it has order 4 is easily verified on generators.
\end{proof}

The verification that the inverse of $\varphi$ behaves as desired is a little calculation that we leave to the reader.

\begin{prop}
As modules over $\underline{\mathcal A(1)}$ we have  $H^*(\MSpinK) \cong {} _{\psi}H^*(BO)$, where the lower case $\psi$ indicates the pulled back module structure.
\end{prop}

\subsection{The cohomology of $\koK$.}

We begin with a few simple observations. By the adjointness of the derived versions of the functors $\Theta \colon \mathcal S_K \rightarrow \mathcal S$ and  $K \times - \colon \mathcal S \rightarrow \mathcal S_K$ we have
$$H^*(E) = H^*(\Theta cE)$$
for every spectrum $E$ over $K$, where the left hand side denotes the twisted cohomology theory represented by $K \times H\mathbb Z/2$ and the right hand side the untwisted one represented by $H\mathbb Z/2$ (and $c$ is cofibrant resolution). Since we may as well replace all spectra occuring in the following by their cofibrant replacements, we shall suppress it from notation.

\begin{lem}\label{bashom}The spectrum $\Theta(\koK)$ is connective with zeroth homotopy group isomorphic to $\mathbb Z/2$. The corresponding Postnikov section yields a homotopy class of maps of parametrised spectra $$\koK \longrightarrow K \times H\mathbb Z/2$$
This map induces a surjection $\underline{\mathcal A} \longrightarrow H^*(\koK)$ upon passage to cohomology. Furthermore, the Poincar\'e series of $H^*(\koK)$ equals the product of that of $H^*(ko)$ and that of $H^*(K)$.
\end{lem}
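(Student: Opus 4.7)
The plan is to identify $\Theta(k_2o)$ with the homotopy orbit spectrum $(ko)_{h\PGl}$ and then extract all four assertions from the associated spectral sequences. Since the basepoint in $(ko')_n$ (the null homomorphism) is $\PGl$-fixed, collapsing the base section commutes with the Borel construction and gives a canonical identification
$$\Theta(k_2o) \;=\; E\PGl_+ \wedge_{\PGl} ko' \;\simeq\; (ko)_{h\PGl},$$
where $\PGl$ acts on the connective $K$-theory spectrum $ko \simeq ko'$ through the action constructed in Section 3.4.

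The homotopy spectral sequence
$$E^2_{p,q} = H_p(B\PGl; \pi_q ko) \Longrightarrow \pi_{p+q}\Theta(k_2o)$$
is concentrated in $q \ge 0$ since $ko$ is connective, yielding connectivity of $\Theta(k_2o)$. In the bottom corner the only contribution is $\pi_0\Theta(k_2o) = H_0(\mathbb Z/2; \pi_0 ko)$, where $\pi_0(\PGl) = \mathbb Z/2$ is expected to act on $\pi_0 ko = \mathbb Z$ by negation: any degree-$1$ element of $\Gl$ swaps the graded components of the Clifford modules and therefore sends the canonical module generator to its formal negative. The coinvariants are $\mathbb Z/2\mathbb Z = \mathbb Z/2$, finishing the first statement. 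The required parametrised map $k_2o \to K \times H\mathbb Z/2$ is then simply the adjoint, under $\Theta \dashv (K \times -)$, of the Postnikov projection $\Theta(k_2o) \to H\pi_0\Theta(k_2o) = H\mathbb Z/2$.

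For the surjectivity and the Poincaré series I would apply $\Theta$ to the parametrised map to obtain an ordinary map of spectra $f : \Theta(k_2o) \to K_+ \wedge H\mathbb Z/2$ and compare the mod-$2$ cohomological homotopy orbit spectral sequences of $(ko)_{h\PGl}$ and $(H\mathbb Z/2)_{h\PGl} \simeq K_+ \wedge H\mathbb Z/2$. The second sequence collapses trivially to $E_2 = H^*(K) \otimes \mathcal A = \underline{\mathcal A}$, because $\PGl$ acts trivially on $H\mathbb Z/2$. For the first, note that $H^*(ko;\mathbb Z/2) = \mathcal A // \mathcal A(1)$ is cyclic as an $\mathcal A$-module, generated by the unique non-zero element of $H^0(ko;\mathbb Z/2)$; any $\mathcal A$-linear automorphism is determined by its action on that generator and so must be the identity. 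In particular the $\pi_0(\PGl)$-action on $H^*(ko;\mathbb Z/2)$ is trivial, the $E_2$-page reads $H^*(K) \otimes \mathcal A // \mathcal A(1)$, and the map of spectral sequences induced by $f$ takes the form $\mathrm{id} \otimes (\mathcal A \twoheadrightarrow \mathcal A // \mathcal A(1))$, a surjection. A surjection from a collapsed spectral sequence to another forces the target to collapse as well: whenever $d_r^{\mathrm{target}} \circ \varphi = \varphi \circ d_r^{\mathrm{source}} = 0$ and $\varphi$ is surjective on $E_r$, one has $d_r^{\mathrm{target}} = 0$. Passing to $E_\infty$ therefore yields simultaneously the surjection $\underline{\mathcal A} \twoheadrightarrow H^*(k_2o)$ and the Poincaré series equality $P(H^*(k_2o)) = P(H^*(K)) \cdot P(H^*(ko))$.

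The main technical hurdle is to verify directly that the nontrivial element of $\pi_0(\PGl)$ acts on $\pi_0(ko) = \mathbb Z$ by $-1$; all other ingredients — connectivity, the cohomological triviality of the $\pi_0(\PGl)$-action, the identification with the Borel construction, and the abstract spectral sequence collapse — are essentially formal once this Clifford-level calculation is in hand.
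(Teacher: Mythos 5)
Your route to the first assertion (connectivity and $\pi_0 = \mathbb Z/2$) is genuinely different from the paper's, and it is here that your only real gap sits. Computing $\pi_0(\Theta(k_2o))$ via the homotopy orbit spectral sequence $H_p(B\PGl; \pi_q\, ko) \Rightarrow \pi_{p+q}\Theta(k_2o)$ forces you to determine the action of the nontrivial element of $\pi_0(\PGl) = \mathbb Z/2$ on $\pi_0(ko) = \mathbb Z$; you assert this action is $-1$ on the strength of a heuristic about swapping graded components, and you explicitly flag the Clifford-level verification as undone. Until that computation is supplied, the proof is incomplete at exactly the point the lemma cares most about. The paper sidesteps this entirely: since the ABS orientation $\alpha\colon MSpin \to ko$ is an $8$-equivalence \emph{integrally}, the long exact sequences of the fibrations over $K$ show that $\hat\alpha$ and hence the induced map $MO = \Theta(M_2O) \to \Theta(k_2o)$ are $8$-equivalences too; connectivity and $\pi_0 \cong \mathbb Z/2$ then follow from the well-known homotopy of $MO$ with no Clifford algebra required.

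For the surjectivity and the Poincar\'e series your strategy agrees with the paper's (comparison of Serre-type spectral sequences induced by $k_2o \to K \times H\mathbb Z/2$), but two of your verifications are actually cleaner than the paper's. Your argument that the coefficient system is constant --- the $\pi_1$-monodromy acts by $\mathcal A$-module automorphisms of $H^*(ko) = \mathcal A /\!/ \mathcal A(1)$, which is cyclic on a degree-zero generator over $\mathbb Z/2$, so the action must be trivial --- is more efficient than the paper's explicit geometric analysis via a section $\mathbb Z/2 \to \PGl$ and its action on the Stiefel--Whitney classes of the universal bundle (though the paper needs that heavier argument anyway to handle the analogous question for $M_2O$, where $H^*(MSpin)$ is not cyclic). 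Likewise, your observation that a surjection on $E_2$ from a collapsing spectral sequence propagates to a surjection on every $E_r$ and forces the target to collapse gives the Poincar\'e series identity directly; the paper instead establishes collapse by routing through the $M_2O$ spectral sequence (comparing Poincar\'e series of $H^*(MO)$ with $H^*(MSpin) \otimes H^*(K)$) and then invokes the ABP splitting to see the $k_2o$ sequence as a summand. So: fill in the sign computation on $\pi_0$, or replace the opening step with the paper's ABS $8$-equivalence argument, and the rest of your proof stands.
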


\begin{proof}
The statement about the homotopy groups of $\Theta(\koK)$ follows from the fact that the Atiyah-Bott-Shapiro orientation $MSpin \rightarrow ko$ is an $8$-equivalence integrally (and not just $2$-locally). By the long exact sequences of the fibrations
\[(MSpin_K)_n \rightarrow K \text{    and    } (\koK)_n \rightarrow K\] 
the same then holds true for the twisted Atiyah-Bott-Shapiro orientation. For example by considering homology we see that the induced map $MO = \Theta(\MSpinK) \rightarrow \Theta(\koK)$ is then also an $8$-equivalence and the homotopy groups of the left side are as claimed. \\
The second statements follow from the Serre spectral sequences: Stong's calculations of $H^*(BO\langle k \rangle)$ (compare \cite[Chapter XI, Proposition 6]{Stong} or \cite{St 63}) show that the map $\mathcal A \rightarrow H^*(ko)$ induced by the zeroth Postnikov section of $ko$ is a surjection with kernel generated by $Sq^1$ and $Sq^2$. The transformation of spectral sequences induced by the Postnikov section $\koK \longrightarrow K \times H\mathbb Z/2$ therefore also is a surjection on the second pages; this is clear once we know that the coefficient system in the spectral sequence for $\koK$ is constant, which we show below. Since the domain spectral sequence (that of $H^*(K \times H\mathbb Z/2)$) collapses, this implies that the target spectral sequence collapses as well and we have a surjection on the limit pages and thus a surjection on the abutment, i.e. $\underline{\mathcal A} \rightarrow H^*(\koK)$ is onto. 

Finally, let us prove that the coefficients in the spectral sequence for $\MSpinK$ (which implies the same for $\koK$, as $\koK$ is a summand in $\MSpinK$ by \ref{genspli}) are indeed constant. To this end observe that the grading map $P{\mathcal O}(\ell^2) \rightarrow \mathbb Z/2$ splits (one such split is given by letting the nontrivial element in $\mathbb Z/2$ act by swapping the standard base elements $\delta_{2n} \in (\ell^2)^{ev}$ and $\delta_{2n+1} \in (\ell^2)^{odd}$). Call a split $s$, let $g$ be the image on the non-trivial element and observe that $s$ induces an isomorphism on path components and thus an isomorphism on $\pi_1$ after taking classifying spaces. Considering the arising diagram
$$\begin{xy}\xymatrix@-1pc{MSpin'(n) \ar[r]\ar[d]                                   & MSpin'(n) \ar[d] \\
                    E\mathbb Z/2 \times_{\mathbb Z/2} MSpin'(n) \ar[r]\ar[d] & EP{\mathcal O}(\ell^2) \times_{P{\mathcal O}(\ell^2)} MSpin'(n) \ar[d] \\
                    B\mathbb Z/2 \ar[r]                                     & BP{\mathcal O}(\ell^2)}
\end{xy}$$
we find that the action of the nontrivial element in $\pi_1(BP{\mathcal O}(\ell^2))$ on the cohomology of the fibre is induced by the action of the $g$ on $MSpin'(n)$. By the Thom isomorphism we therefore need to study the action of $g$ on $BSpin'(n)$ and this is given by left multiplication of $g$ on
$$BSpin'(n) = {\mathcal O}'^+_n/Spin(n) = P{\mathcal O}(\ell^2)'_n/O(n)$$
This action is covered by a bundle isomorphism of the universal $O(n)$-bundle over $BSpin'(n)$, which will yield the claim since its Stiefel-Whitney classes generate the mod $2$ cohomology ring of $BSpin'(n)$. The universal principal $O(n)$-bundle is given by 
$$\begin{xy}\xymatrix@-1pc{{\mathcal O}'^+_n \times_{Spin(n)} O(n) \ar[d]\ar[rr]^-\cong && {\mathcal O}'_n \times_{Pin(n)} O(n) \\
                              BSpin'(n)                       &&}
\end{xy}$$
and the isomorphism covering the left multiplication by $g$ is again just given by left multiplication with $g$ using the right hand side description of the total space. Note that this does not work for the universal $SO(n)$ or $Spin(n)$ bundles since $Pin(n)$ does not act on these. The coefficient system is indeed non-trivial using integral coefficients (since Pontryagin classes are sensitive to orientations). 
\end{proof}

In particular, we find $H^0(\koK) \cong \mathbb Z/2$. Let us denotes by $\kappa$ the unique non-zero class in that group. Using the splitting of $\MSpinK$ once more we find:

\begin{lem}
We have $\varphi(Sq^1) \kappa = 0$ and $\varphi(Sq^2) \kappa = 0$.
\end{lem}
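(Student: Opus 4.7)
The plan is to pull both identities back to $H^*(M_2O)$ via the twisted Atiyah--Bott--Shapiro orientation, where the generalised Anderson--Brown--Peterson splitting \ref{genspli} makes the computation transparent.

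First I would identify the orientation $\hat\alpha: M_2O \to k_2o$ with the factor $\overline\theta_\emptyset$ appearing in Corollary \ref{genspli}: a lift of $\pi_\emptyset = 1 \in KO^0(BO)$ to $ko^0(BO)$ corresponds, under the Thom-class construction, precisely to $\hat\alpha$. It then follows that $k_2o$ is a $2$-local wedge summand of $M_2O$, whence $\hat\alpha^*\colon H^*(k_2o) \to H^*(M_2O)$ is split injective on mod $2$ cohomology. In particular both groups are one-dimensional in degree zero, so $\hat\alpha^*\kappa$ is forced to equal the mod $2$ Thom class $u \in H^0(M_2O)$.

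Next I would apply the Thom isomorphism set up in the previous subsection, which identifies $H^*(M_2O) \cong {}_{\psi} H^*(BO)$ as modules over $\underline{\mathcal A(1)}$ and sends $u$ to $1$. Because $\varphi$ was defined to be the inverse of $\psi$, the element $\varphi(Sq^i)\cdot u$ corresponds to $\psi(\varphi(Sq^i))\cdot 1 = Sq^i\cdot 1 \in H^*(BO)$, which vanishes for $i=1,2$. Naturality then gives $\hat\alpha^*(\varphi(Sq^i)\kappa) = \varphi(Sq^i)u = 0$, and injectivity of $\hat\alpha^*$ finishes the argument.

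The main subtlety is the identification of $\overline\theta_\emptyset$ with $\hat\alpha$ needed to activate the splitting. If this proves delicate one can instead verify $\varphi(Sq^i) u = 0$ by hand, using the Wu formulae $Sq^i u = w_i u$ together with the convention $\iota_1 \leftrightarrow w_1$, $\iota_2 \leftrightarrow w_2$ fixed after Lemma \ref{fibtho}; expansion then yields
\[
\varphi(Sq^1)u = (w_1 + w_1)u = 0, \qquad \varphi(Sq^2)u = (w_2 + w_1\cdot w_1 + w_1^2 + w_2)u = 0,
\]
after which the injectivity of $\hat\alpha^*$ in degrees $1$ and $2$ (which is all that is really needed, and follows from the splitting applied only in those degrees) closes the argument as before.
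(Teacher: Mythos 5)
Your proposal is correct and follows essentially the same route as the paper: pull back along $\hat\alpha^*$, which is (split) injective on mod $2$ cohomology by \ref{genspli}, and then verify the vanishing in $H^*(M_2O) \cong H^*(MO)$ via the Thom isomorphism and the Wu formula $Sq^i u = w_i u$. Your ``backup'' calculation in the final display is in fact exactly the computation the paper carries out; the initial argument via $\psi = \varphi^{-1}$ is a cleaner packaging of the same fact.
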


\begin{proof}
Since the Atiyah-Bott-Shapiro orientation is injective, we can check $\varphi(Sq^1)u = 0$ and $\varphi(Sq^2)u = 0$ in $H^*(\MSpinK) = H^*(MO)$, where the computation reads
$$\varphi(Sq^1) u = Sq^1(u) + \iota_1 u = w_1u + w_1u = 0$$
$$\varphi(Sq^2) u = Sq^2(u) + \iota_1Sq^1(u) + \iota_1^2u + \iota_2u = w_2u + w_1^2u + w_1^2u + w_2u = 0$$
\end{proof}

\begin{thm}\label{632}
There is a unique non-trivial class $\kappa_{8n} \in H^{8n}(\koK \langle 8n \rangle)$ and the map $sh^{8n}\underline{\mathcal A} \rightarrow H^*(\koK\langle 8n \rangle)$ given by evaluation on $\kappa_{8n}$ induces an isomorphism
$$sh^{8n}\underline{\mathcal A}_\varphi \otimes_{\mathcal A(1)} \mathbb Z/2 \rightarrow H^*(\koK\langle 8n \rangle)$$
of left $\underline{\mathcal A}$-modules.
\end{thm}

\begin{proof}
Bott periodicity immediately reduces the claim to the case $n = 0$. 
By the above lemma the map indeed factors through $\underline{\mathcal A} / \langle\varphi(Sq^{1}),\varphi(Sq^2)\rangle$ and it is surjective by the proposition above that. 
We will thus be done once we have computed the Poincar\'e series of both sides. 
For the right hand side this was also done in the lemma above, and for the left hand side all that remains to be noted is that the Poincar\'e-series of
$\underline{\mathcal A}_\varphi \otimes_{\mathcal A(1)} \mathbb Z/2$ agrees with that of $\underline{\mathcal A} \otimes_{\mathcal A(1)} \mathbb Z/2$
by two applications of \cite[Proposition 1.7]{MiMo} and that the latter is indeed the tensor product of $H^*(K)$ and $\mathcal A \otimes_{\mathcal A(1)} \mathbb Z/2$ 
at least as a $\mathbb Z/2$ vector space. At this point we are done since $H^*(ko) \cong \mathcal A \otimes_{\mathcal A(1)} \mathbb Z/2$ by Stong's calculation \cite{St 63}.
\end{proof}

By similar arguments we can also deal with $\koK\langle 2 \rangle$.

\begin{thm}\label{cohok}
There is a unique non-trivial class $\kappa_{8n+2} \in H^{8n+2}(\koK \langle 8n+2 \rangle)$ and the map $sh^{(8n+2)}\underline{\mathcal A} \rightarrow H^*(\koK\langle 8n + 2 \rangle)$ given by evaluation on $\kappa_{8n+2}$ induces an isomorphism
$$sh^{(8n+2)} \underline{\mathcal A}_\varphi \otimes_{\mathcal A(1)} \mathcal A(1)/\langle Sq^3\rangle \longrightarrow H^*(\koK\langle8n+2\rangle)$$
of left $\underline{\mathcal A}$-modules.
\end{thm}

\begin{proof}
By Bott periodicity it suffices again to consider a single value of $n$, which we choose to be $1$, since $\koK\langle 10 \rangle$ is a summand of $\MSpinK$. 
In particular, the Serre spectral sequence of $\koK\langle 10 \rangle$ has constant coefficients (as observed in the proof of \ref{bashom}), 
giving us the element $\kappa_{10}$. The computation of the Poincar\'e series is also basically the same as in the previous argument, 
however, $\varphi(Sq^3)\kappa_{10} = 0$ does not follow directly, since we do not know the image of $\kappa_{10}$ in $H^{10}(\MSpinK)$ under a lift 
$\overline{\theta}_3: \MSpinK \rightarrow \koK\langle 10 \rangle$ as in the generalised Anderson-Brown-Peterson splitting.
To proceed we shall decompose $\overline{\theta}_3$ into its components:
$$\MSpinK \stackrel{\Delta}{\longrightarrow} BO_+ \barwedge \MSpinK \xrightarrow{\tilde{\pi}_3 \barwedge {\alpha}_K} \Omega^\infty ko\langle 10 \rangle \barwedge \koK \stackrel{\mu}{\longrightarrow} \koK\langle 10 \rangle$$
where $\tilde{\pi}_3 \in ko\langle 10 \rangle^0(BO)$ corresponds to $\overline{\theta}_3$ under the Thom isomorphism. Because $\mu^*\kappa_{10}$ clearly equals $\lambda_{10} \times \kappa$, where $\lambda_{10}$ refers to the fundamental class in $H^{10}(\Omega^\infty ko\langle 10 \rangle)$ ($\Omega^\infty ko\langle 10 \rangle$ is $9$-connected with $10$th homotopy group isomorphic to $\mathbb Z/2$), the following calculation gives the claim.
\begin{align*}
\varphi(Sq^3)(\lambda_{10} \times \kappa) &= \varphi(Sq^1)\varphi(Sq^2)(\lambda_{10} \times \kappa) \\
                                          &= \varphi(Sq^1)\Big((1 \otimes Sq^2)(\lambda_{10} \times \kappa) + (\iota_1 \otimes Sq^1)(\lambda_{10} \times \kappa) \\
                                          &\ \quad + (\iota_1^2 \otimes 1)(\lambda_{10} \times \kappa) + (\iota_2 \otimes 1)(\lambda_{10} \times \kappa)\Big) \\
                                          &= \varphi(Sq^1)\Big(\big[Sq^2(\lambda_{10}) \times \iota + \underbrace{Sq^1(\lambda_{10}) \times Sq^1(\kappa)}_{= \cancel{Sq^1(\lambda_{10}) \times \iota_1\kappa}} + \underbrace{\lambda_{10} \times Sq^2(\kappa)}_{= \cancel{\lambda_{10} \times \iota_2\kappa}}\big] \\
                                          &\ \quad + \big[\cancel{Sq^1(\lambda_{10}) \times \iota_1\kappa} 
                                          + \underbrace{\lambda_{10} \times \iota_1Sq^1(\kappa)}_{\cancel{\lambda_{10} \times \iota_1^2\kappa}}\big] 
                                          + \cancel{\lambda_{10} \times \iota_1^2\kappa} + \cancel{\lambda_{10} \times \iota_2\kappa}\Big) \\
                                          &= (1 \otimes Sq^1)(Sq^2(\lambda_{10}) \times \kappa) + (\iota_1 \otimes 1)(Sq^2(\lambda_{10}) \times \kappa) \\
                                          &= \big[\underbrace{Sq^1Sq^2(\lambda_{10}) \times \kappa}_{= 0} 
                                          + \underbrace{Sq^2(\lambda_{10}) \times Sq^1(\kappa)}_{= Sq^2(\lambda_{10}) \times \iota_1\kappa}\big] 
                                          + Sq^2(\lambda_{10}) \times \iota_1\kappa \\
                                          &= 0
\end{align*}
where $Sq^1Sq^2(\lambda_{10}) = 0$ by Stong's calculations of $H^*(BO\langle n \rangle)$ in \cite{St 63}.
\end{proof}
 
With these results it seems natural to guess the following.

\begin{conj}
There are isomorphisms
\begin{align*}
sh^{8n+1}\underline{\mathcal A}/ \langle\varphi(Sq^2)\rangle     &\stackrel{\cong}{\longrightarrow} H^*(\koK\langle 8n+1 \rangle) \\
sh^{8n+4}\underline{\mathcal A}/\langle\varphi(Sq^{1}),\varphi(Sq^5)\rangle &\stackrel{\cong}{\longrightarrow} H^*(\koK\langle 8n+4 \rangle) 
\end{align*}
of $\underline{\mathcal A}$-modules generalising Stong's computation of $H^*(ko\langle n \rangle)$.
\end{conj}

However, the arguments used above break down. 
Another approach would be to use the Maholwald's observation that $ko\langle n \rangle \simeq X_n \wedge ko$ for some small space $X_n$ and extend the map
$X_n \rightarrow ko\langle n \rangle$ to an equivalence $\koK\langle n \rangle \simeq X_n \barwedge \koK$ using the $ko$-module structure on $\koK\langle n \rangle$. 
We shall not investigated the details of this here.

\subsection{The cohomology of $\MSpinK$.}

Our generalised Anderson-Brown-Peterson splitting allows us to compute the cohomology of $\MSpinK$ as a module over the twisted Steenrod algebra. The isomorphism, however, we cannot uniquely specify, just as Anderson, Brown and Peterson could not determine theirs due to the non-uniqueness of lifts in theorem \ref{lift}.

\begin{cor}\label{cohom}
Any choice of splitting from \ref{abp} determines an isomorphism
$$H^*(\MSpinK) \cong \bigoplus_{J, n(J)\atop even} sh^{n_J} \underline{\mathcal A}/\langle\varphi(Sq^1), \varphi(Sq^2)\rangle \oplus \bigoplus_{J, n(J)\atop odd} sh^{n_J} \underline{\mathcal A}/\langle\varphi(Sq^3)\rangle \oplus \bigoplus_i sh^{n_i} \underline{\mathcal A}$$
of modules over the twisted Steenrod algebra.
\end{cor}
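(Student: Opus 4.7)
The plan is to combine the 2-local splitting of Corollary \ref{genspli} with the cohomology calculations of Theorem \ref{cohok}. First, since mod $2$ cohomology is already 2-local, the splitting map induces an isomorphism
$$H^*(M_2O) \cong H^*\Bigl(\prod_J k_2o\langle n_J\rangle \times \prod_i K \times sh^{n_i} H\mathbb{Z}/2\Bigr)$$
of left $\underline{\mathcal{A}}$-modules; the $\underline{\mathcal{A}}$-linearity is automatic, since the splitting is a map of parametrised spectra and $\underline{\mathcal{A}}$ is by definition the algebra of stable operations on twisted mod $2$ cohomology. Because the connectivities $n_J$ and $n_i$ diverge while each factor is bounded below in cohomology, every fixed cohomological degree meets only finitely many factors, so the right hand side coincides with the direct sum $\bigoplus_J H^*(k_2o\langle n_J\rangle) \oplus \bigoplus_i H^*(K \times sh^{n_i} H\mathbb{Z}/2)$.

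Next I would identify each summand using the earlier computations. For the twisted connective covers, Corollary \ref{lift2} gives $n_J = 4n(J)$ when $n(J)$ is even and $n_J = 4n(J)-2$ when $n(J)$ is odd, so $n_J$ is congruent to $0$ respectively $2$ modulo $8$; applying Theorem \ref{cohok} (extended via Bott periodicity to all admissible $n_J$) then produces
$$H^*(k_2o\langle n_J \rangle) \cong sh^{-n_J} \underline{\mathcal A}_\varphi \otimes_{\mathcal A(1)} \mathbb Z/2 \quad \text{or} \quad sh^{-n_J} \underline{\mathcal A}_\varphi \otimes_{\mathcal A(1)} \mathcal A(1)/Sq^3$$
according to the parity of $n(J)$. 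Since $\mathcal A(1)$ is generated as an augmented algebra by $Sq^1$ and $Sq^2$, and $\mathcal A(1)/Sq^3$ denotes the quotient by the left ideal generated by $Sq^3$, these tensor products rewrite as $sh^{-n_J}\underline{\mathcal A}/\varphi(Sq^{1,2})$ and $sh^{-n_J}\underline{\mathcal A}/\varphi(Sq^3)$, respectively, matching the first two families in the claimed decomposition.

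For the $H\mathbb{Z}/2$-summands, the identification $H^*(K \times sh^{n_i} H\mathbb{Z}/2) \cong sh^{-n_i}\underline{\mathcal A}$ is pure representability: the parametrised spectrum $K \times H\mathbb{Z}/2$ represents the cohomology theory whose stable self-operations form $\underline{\mathcal A}$ by definition, so evaluating an operation on the tautological class in degree $n_i$ provides the asserted $\underline{\mathcal A}$-linear isomorphism. Assembling the three identifications yields the decomposition stated in the corollary.

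The only mild obstacle is the passage from a product to a direct sum after applying cohomology, which is handled cleanly by the diverging connectivities; the remainder is a direct translation of the previously established splitting and cohomology computations. The non-canonicity visible in the statement (the dependence on a choice of splitting) just reflects the choice of lifts $p_J$ from Corollary \ref{genspli} together with the choice of generators $\kappa_{n_J}$ used to identify the $k_2o$-summands.
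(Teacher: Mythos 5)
Your proof is correct and takes essentially the approach the paper intends: the corollary is stated without an explicit proof because it is the direct combination of the $2$-local splitting \ref{genspli} with the cohomology computations of Theorem \ref{cohok} (and the preceding theorem for $k_2o\langle 8n\rangle$), together with the identification $H^*(K \times sh^{n_i}H\mathbb Z/2) \cong sh^{-n_i}\underline{\mathcal A}$ and the rewriting of the induced tensor products as the stated quotients of $\underline{\mathcal A}$. One small correction: the formula for $n_J$ (namely $4n(J)$ if $n(J)$ is even, $4n(J)-2$ if $n(J)$ is odd) is given in Theorem \ref{lift}, not Corollary \ref{lift2}; the rest of the argument, including the product-to-sum passage via diverging connectivities and the reduction of $\underline{\mathcal A}_\varphi\otimes_{\mathcal A(1)}\mathbb Z/2$ and $\underline{\mathcal A}_\varphi\otimes_{\mathcal A(1)}\mathcal A(1)/Sq^3$ to $\underline{\mathcal A}/\varphi(Sq^{1,2})$ and $\underline{\mathcal A}/\varphi(Sq^3)$ respectively, is fine.
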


In particular, we see that $H^*(\MSpinK) \cong \underline{\mathcal A}_\varphi \otimes_{\mathcal A(1)} M$ for the $\mathcal A(1)$-module $M$ given as a direct
sum of $\mathbb Z/2$'s, jokers (that is $\mathcal A(1)/Sq^3$'s) and free modules according to the above decomposition. Since $H^*(MSpin) \cong \mathcal A_\varphi \otimes_{\mathcal A(1)} M$ for that very same $M$ we shall refer to it as the \emph{Anderson-Brown-Peterson module}. \\
This explicitly given structure as an extended module over the twisted Steenrod algebra can be used to get a hand on the second page 
of a $K\times H\mathbb Z/2$-based Adams spectral sequence. We hope to take advantage of this to approach conjecture \ref{ourconj} along the lines of 
Stolz' arguments in \cite{St2}, which uses the Adams spectral sequence as a principal tool.

\section{A first glance at the twisted version of Stolz' transfer.}

As described in the introduction our interest in twisted $Spin$-cobordism groups arises from their connection to the existence of metrics of positive scalar curvature.
The case of spin manifolds in conjecture \ref{ourconj} from the first chapter was proven by F\"uhring and Stolz. To state their results we need to rephrase the conjecture as the degree-greater-than-4 case of the inclusion
$$ker\big(\alpha_K: \Omega^{Spin}_*(B\pi;u) \rightarrow ko_*(B\pi;u)\big) \subseteq \Omega_*^{Spin}(B\pi;u)^+$$
for every finitely presented group $\pi$, and any twist $u: B\pi \to K$, where $\Omega_*^{Spin}(-)^+ \subseteq \Omega_*^{Spin}(-)$ denotes those elements which can be represented 
by manifolds admitting positive scalar curvature metrics. In this rather algebraic formulation we can split up the statement by localising at and inverting $2$, respectively, and Stolz proved the $2$-primary in \cite{St2} and F\"uhring the odd-primary part in \cite{Fu}, both under the assumption $u=0$. \\
We now set out to explain the method Stolz employed and indicate how one might hope it generalises
to include the case of almost spin manifolds. One should also be able to adapt F\"uhring's arguments, but we will not discuss that here as his proof is by different, far more geometric methods. \\
Since explicit generators even for the plain $Spin$-cobordism ring are hard to come by (and still not known outside small degrees), Stolz' idea was to consider total spaces
of fibre bundles with fixed fibre $F$ and structure group $G$ over varying base spaces. For appropriately chosen fibre these will always admit metrics of positive scalar curvature.
The entirety of such bundles in a cobordism theory can generically be described as the image of the following transfer map
\begin{align*}
T: \Omega_*(X \times BG) &\longrightarrow \Omega_{*+dim(F)}(X) \\
[M,(f,g):M \rightarrow X \times BG] &\longmapsto [g^*(EF), f \circ p: g^*(EF) \rightarrow M \rightarrow X]
\end{align*}
where $EF = EG \times_G F$ denotes the universal bundle over $BG$ with fibre $F$; imposing decorations like twists and $Spin$-structures needs additional assumptions on $G$ and $F$ of course. 
For the problem at hand Stolz chooses $F$ as $\mathbb HP^2$ and $G = Isom(\mathbb HP^2)$, where the isometry group (which in fact is $PSp(3)$) is taken with respect to the Fubini-Study metric on $\mathbb HP^2$, which has positive scalar curvature.
He then goes on to show that the vertical tangent bundle of $EF \rightarrow BG$ admits a $Spin$-structure, wherefore the above map makes sense with $Spin$-decorations. We will review this in a moment.
The $2$-primary part of \ref{ourconj} for spin manifolds now follows from
\begin{uthm}[Stolz 1994]
We have 
$$ker\big(\alpha: \Omega^{Spin}_*(X) \rightarrow ko_*(X)\big)_{(2)} \subseteq im\big(T: \Omega_{*-8}^{Spin}(X \times BPSp(3)) \rightarrow \Omega_{*}^{Spin}(X)\big)_{(2)}$$
for every space $X$.
\end{uthm}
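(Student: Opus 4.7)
The plan is to use the Anderson-Brown-Peterson splitting (Corollary \ref{abp}) to reduce the statement to a summand-by-summand analysis. After $2$-localisation we have an equivalence
$$MSpin_{(2)} \;\simeq\; ko \;\vee\; \bigvee_{J \neq \emptyset} ko\langle n_J \rangle \;\vee\; \bigvee_i sh^{n_i} H\mathbb Z/2,$$
under which $\alpha$ is identified with projection onto the leading $ko$-summand (corresponding to the empty sequence, $\pi_\emptyset = 1$). Consequently, $\ker(\alpha)_{(2)}$ is precisely the direct sum of all remaining summands, and the theorem reduces to showing that each of these lies in the image of $T$ after $2$-localisation.

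The transfer $T$ is induced by the Becker-Gottlieb pretransfer of the universal $\mathbb{HP}^2$-bundle $EPSp(3) \times_{PSp(3)} \mathbb{HP}^2 \to BPSp(3)$, whose vertical tangent bundle is canonically spin; this produces a stable map $\tau: MSpin \wedge BPSp(3)_+ \to sh^{-8} MSpin$ realising $T$ on homotopy. I would then compose $\tau$ with the projections of the ABP splitting and compute the induced maps on characteristic numbers: for an $\mathbb{HP}^2$-bundle $p: E \to B$, the splitting principle on the base together with multiplicativity of the $\pi_J$-classes gives a triangular decomposition
$$\pi_J(E) \;=\; \sum_{J = J' \sqcup J''} \pi_{J'}(B) \cdot p_!\bigl(\pi_{J''}(T_v E)\bigr)$$
whose leading term (with $J'' = J$) is a $2$-local unit, coming from the non-vanishing of the fibre integrals of the $\pi_J$-classes over $\mathbb{HP}^2$ (even though $\hat{\alpha}(\mathbb{HP}^2) = 0 \in ko_8$, the higher $KO$-Pontryagin information survives).

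An induction on $|J|$ then identifies the image of $T$, restricted to each $ko\langle n_J \rangle$-summand with $J \neq \emptyset$, with all of $ko\langle n_J \rangle_*(X)_{(2)}$: the leading term realises any prescribed class modulo strictly lower filtration, and the inductive hypothesis absorbs the correction. The $sh^{n_i} H\mathbb Z/2$-summands are handled in parallel using ordinary mod $2$ characteristic numbers in place of $\pi_J$. The main obstacle is the inductive bookkeeping: one must filter the ABP splitting so that the transfer acts triangularly with respect to the filtration, and verify that the fibre integrals over $\mathbb{HP}^2$ assemble into an invertible matrix at each filtration level. As a sanity check, $\operatorname{im}(T)_{(2)} \subseteq \ker(\alpha)_{(2)}$ follows independently from Lichnerowicz and the Fubini-Study metric, which has positive scalar curvature.
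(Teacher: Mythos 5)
The paper does not prove this statement; it cites it as a theorem of Stolz and outlines his strategy in the surrounding text: after lifting the transfer to $t\colon MSpin\wedge S^8\wedge BG_+\to\hofib(\alpha)$ (possible because $\alpha\circ T\simeq 0$ by Atiyah--Singer), Stolz proves that $t^*$ is injective in $\mathrm{mod}\ 2$-cohomology with an $\mathcal A$-linear split (\cite[Proposition~1.3]{St}), and then that any such split is realised by a map of $2$-localised spectra (\cite[Proposition~8.3]{St2}); a $2$-local split surjection of homology theories follows. Your proposal shares the framing --- use the Anderson--Brown--Peterson splitting to identify $\ker(\alpha)_{(2)}$ with the complementary summands --- but replaces Stolz's cohomological argument with direct characteristic-number bookkeeping, which is a genuinely different route.

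Unfortunately the crucial step in your route fails. First, your decomposition is mis-indexed: the Cartan formula $\pi_k(E\oplus F)=\sum_{i+j=k}\pi_i(E)\pi_j(F)$ applied to $TE\cong p^*TB\oplus T_vp$ yields a sum over all componentwise splittings $J_a=i_a+j_a$, not over disjoint subsequence decompositions $J=J'\sqcup J''$. Much more seriously, the ``unit diagonal'' claim is false. Property (3) of the $\pi_k$-classes gives $\pi_k(E)=0$ whenever $E$ is orientable and $2k>\mathrm{rank}(E)$; since the vertical tangent bundle of an $\mathbb{HP}^2$-bundle has rank $8$, we have $\pi_k(T_vp)=0$ for all $k\geq 5$, so $\pi_J(T_vp)=0$ and $p_!\big(\pi_J(T_vp)\big)=0$ for every Anderson--Brown--Peterson sequence $J$ containing an entry $\geq 5$ --- the opposite of a $2$-local unit. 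Even for $J$ with small entries there is no reason the fibre integrals over $\mathbb{HP}^2$ are invertible, and the difficulty only compounds as $|J|$ grows. Your proposed induction on $|J|$ therefore never gets started. This obstruction is precisely why Stolz's proof requires the delicate Steenrod-module comparison of \cite[Proposition~1.3]{St} and the realisation theorem \cite[Proposition~8.3]{St2} rather than a direct count of fibre integrals; without a replacement for the unit-diagonal step your argument does not close.
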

The transfer map is induced by an explicitly given map of spectra 
$$t \colon MSpin \wedge S^8 \wedge BG_+ \longrightarrow MSpin$$
and the composition
$$MSpin\wedge S^8 \wedge BG_+ \stackrel t\longrightarrow MSpin \stackrel{\alpha}{\longrightarrow} ko$$
is nullhomotopic by the family version of the Atiyah-Singer index theorem (compare \cite[Section 1 \& 2]{St}). Therefore, one can pick a lift $\hat t: MSpin\wedge S^8 \wedge BG_+ \rightarrow \hofib(\alpha)$ of the transfer map. To obtain the result above Stolz then establishes the following two results:
\begin{uthm}[Proposition 1.3 \cite{St}]
Any such $\hat t$ induces a injection in $mod\ 2$-cohomology, which splits over $\mathcal A$.
\end{uthm}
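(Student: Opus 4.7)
The plan is to decompose the target of $t^*$ via the (untwisted) Anderson-Brown-Peterson splitting of Corollary \ref{abp} and analyse each summand separately. Since $\alpha$ itself is the component of the splitting corresponding to $J = \emptyset$, one obtains a $2$-local equivalence
$$\hofib(\alpha)_{(2)} \simeq \prod_{J \neq \emptyset} ko\langle n_J \rangle \times \prod_i sh^{n_i} H\mathbb{Z}/2,$$
and consequently an identification of $H^*(\hofib(\alpha);\mathbb{Z}/2)$ as the direct sum of Stong's modules $\mathcal{A}\otimes_{\mathcal{A}(1)}\mathbb{Z}/2$ and $\mathcal{A}\otimes_{\mathcal{A}(1)}\mathcal{A}(1)/Sq^3$ (or further connective-cover quotients) and of free $\mathcal{A}$-modules in the appropriate shifts.

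The next step is to compute the composites $p_J\circ t$ and $q_i\circ t$, where $p_J$ and $q_i$ denote the projections onto the summands of the splitting. By construction, $p_J \circ t$ is (a $ko\langle n_J\rangle$-lift of) the class representing $\pi_J(T^v)\cdot u_{T^v}$, where $T^v$ is the vertical tangent bundle of the universal $\mathbb{HP}^2$-bundle $EF \to BG$ and $u_{T^v}$ is its Thom class; an analogous mod $2$ statement holds for $q_i \circ t$ with an appropriate Stiefel-Whitney monomial. Under the identification
$$H^*(MSpin \wedge S^8 \wedge BG_+;\mathbb{Z}/2) \cong \Sigma^8 H^*(MSpin;\mathbb{Z}/2)\otimes H^*(BG;\mathbb{Z}/2),$$
these images land in a controlled, Steenrod-invariant subspace determined by the characteristic classes of $T^v$.

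The crux, and the step I expect to be the main obstacle, is to verify that the union of these images generates, as an $\mathcal{A}$-submodule, a summand whose Poincar\'e series matches $H^*(\hofib(\alpha);\mathbb{Z}/2)$ summand by summand. This depends on the fact that the characteristic classes of $T^v$ restrict to polynomial generators of the relevant part of $H^*(BPSp(3);\mathbb{Z}/2)$, so that the various $\pi_J(T^v)$ are algebraically independent modulo decomposables; combined with the graded-freeness of $H^*(MSpin;\mathbb{Z}/2)$ over the image of $\alpha^*$ supplied again by ABP, this will yield injectivity.

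Finally, the $\mathcal{A}$-module splitting is produced by a Frobenius-reciprocity argument: the target cohomology is (after factoring out the base-space cohomology) extended from $\mathcal{A}(1)$, so an $\mathcal{A}$-linear splitting of $t^*$ can be constructed from any $\mathcal{A}(1)$-linear (respectively, on the free summands, $\mathbb{Z}/2$-linear) splitting. Such a splitting exists automatically once the image has been identified as a direct summand of the required isomorphism type in the preceding step, since we are working over a graded field.
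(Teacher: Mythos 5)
The paper does not reprove this statement; it is cited from Stolz's 1992 Annals paper, and the surrounding text recalls Stolz's method: he first constructs, after Pengelley, a \emph{coalgebra} retraction $r$ of $\alpha^*\colon H^*(ko) \to H^*(MSpin)$, uses it to define $H^*(ko)$-primitives $\overline{H^*(X)}$ for $MSpin$-module spectra $X$, proves that $H^*(X) \cong \mathcal{A} \otimes_{\mathcal{A}(1)} \overline{H^*(X)}$ \emph{naturally in $MSpin$-module maps}, and then splits the transfer at the level of primitives by a direct $\mathcal{A}(1)$-computation. Your proposal takes a genuinely different route, working directly with the summands of the Anderson--Brown--Peterson decomposition of $\hofib(\alpha)$; but it has a gap at precisely the point where the primitives formalism would be needed.

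Concretely, your ``crux'' step (that the images assemble to an $\mathcal{A}$-direct summand with the correct Poincar\'e series) is the entire content of the statement, and the proposed reduction to algebraic independence of the classes $\pi_J(T^v)$ modulo decomposables is not sufficient: an $\mathcal{A}$-submodule with matching Poincar\'e series need not split off over $\mathcal{A}$. More seriously, the Frobenius-reciprocity step presupposes that $t^*$ is \emph{induced} from an $\mathcal{A}(1)$-linear map under some fixed identification of both sides as modules extended along $\mathcal{A}(1) \hookrightarrow \mathcal{A}$. The ABP splitting of $MSpin$ is not canonical (it depends on the choices of lifts $\theta_J$) and furnishes no extended-module identification that is functorial in $MSpin$-module maps, so one cannot conclude that $t^*$ is of the required form $\mathrm{id}_{\mathcal{A}}\otimes_{\mathcal{A}(1)} t_0^*$. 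That naturality is exactly what Stolz's Pengelley-primitives machinery supplies; indeed Remark~\ref{gap} observes that even Stolz's own identification $\overline{H^*(MSpin)} \cong M$, which he attempts to extract from the ABP splitting, is a known small gap in his paper, underlining that the ABP decomposition alone does not deliver what you need here. Bypassing the primitives construction removes the very tool that makes the $\mathcal{A}(1)$-reduction legitimate.
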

\begin{uthm}[Proposition 8.3 \cite{St2}]
Any split from the previous theorem is realised by a map of ($2$-localised) spectra.
\end{uthm}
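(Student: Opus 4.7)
The plan is to run a $2$-local Adams spectral sequence of the form
$$E_2^{s,t} = \mathrm{Ext}_{\mathcal A}^{s,t}(H^*(X), H^*(Y)) \Longrightarrow [Y, X]_{t-s, (2)},$$
where $X = MSpin \wedge S^8 \wedge BG_+$ and $Y = \hofib(\alpha)$. A splitting $\sigma \colon H^*(X) \to H^*(Y)$ over $\mathcal A$ determines a class in $E_2^{0,0}$, and realising it by a map of $2$-local spectra amounts to showing this class is a permanent cycle whose lift to the abutment composes with $t$ to the identity on $H^*(Y)$. As nothing lies in negative filtration, the whole question collapses to killing the differentials $d_r$ out of $(0,0)$ for $r \geq 2$.

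The central structural input is that both $H^*(X)$ and $H^*(Y)$ are extended $\mathcal A(1)$-modules. For $H^*(X)$ this follows from the Anderson-Brown-Peterson splitting applied to $H^*(MSpin)$ combined with the K\"unneth formula. For $H^*(Y)$ one uses the cofibre sequence $\hofib(\alpha) \to MSpin \to ko$: the map $\alpha^* \colon H^*(ko) \to H^*(MSpin)$ is the inclusion onto the leading $\mathbb Z/2$-summand of the ABP decomposition, so $H^*(Y)$ is its cokernel, again extended, namely $\mathcal A \otimes_{\mathcal A(1)} M'$ with $M'$ obtained from the ABP module $M$ by deleting one $\mathbb Z/2$-summand. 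A Milnor-Moore change of rings then rewrites
$$E_2^{s,t} = \mathrm{Ext}_{\mathcal A(1)}^{s,t}(N, H^*(Y)),$$
for the $\mathcal A(1)$-module $N$ underlying $H^*(X)$, and freeness of $\mathcal A$ over $\mathcal A(1)$ further decomposes $H^*(Y)$ as an $\mathcal A(1)$-module into shifted copies of $\mathbb Z/2$, the joker $\mathcal A(1)/Sq^3$, and the free module $\mathcal A(1)$.

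The required vanishing $\mathrm{Ext}_{\mathcal A(1)}^{r,r-1}(N, H^*(Y)) = 0$ for $r \geq 2$ then reduces to the corresponding statement for each of the three types of building blocks. Each is classical and admits a well-known vanishing line; a Poincar\'e-series comparison shows that the groups on the line $t - s = 0$ in positive filtration are all zero. The permanent cycle $\sigma$ thus survives to $E_\infty^{0,0}$ and lifts to a $2$-local map $Y \to X$ by strong convergence of the Adams spectral sequence, which is guaranteed by the finite-type of all spectra involved after $2$-localisation.

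The main technical obstacle is the uniformity of the vanishing estimate across the ABP summands: the dimensional shifts $n_J$ entering both $N$ and $H^*(Y)$ accumulate without bound, so one must ensure that no summand produces a spurious contribution on the relevant diagonal. The resolution rests on the fact that as $n_J$ grows, nonzero Ext contributions are pushed further into the positive-stem region, leaving only a bounded slab of total degrees in which potential differentials could live; the standard vanishing line for $\mathrm{Ext}_{\mathcal A(1)}(\mathbb Z/2, \mathbb Z/2)$ handles this slab. This is exactly the bookkeeping Stolz performs in the untwisted case, and it transcribes here without essential change.
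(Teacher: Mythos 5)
The paper does not prove this statement; it is quoted as Proposition 8.3 of Stolz's \cite{St2}, so there is no in-paper proof to compare against. That said, the engine you reach for -- Adams spectral sequence plus a Milnor--Moore change of rings to $\mathcal A(1)$ -- is indeed what ultimately powers Stolz's argument, and a correct proof can be extracted from your sketch once several details are repaired.

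The central error is where the three-term decomposition belongs. You write that freeness of $\mathcal A$ over $\mathcal A(1)$ ``decomposes $H^*(Y)$ as an $\mathcal A(1)$-module into shifted copies of $\mathbb Z/2$, the joker, and the free module.'' That is not what freeness gives you: since $\mathcal A$ is a free $\mathcal A(1)$-module, $H^*(Y)\cong\mathcal A\otimes_{\mathcal A(1)}M'$ restricted to $\mathcal A(1)$ is simply a bounded-below, locally finite \emph{free} $\mathcal A(1)$-module. The $\mathbb Z/2$/joker/free pattern is the shape of the ABP module $M'$, not of the restriction of $\mathcal A\otimes_{\mathcal A(1)}M'$. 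This confusion is what forces you into the vague bookkeeping of the final paragraph, which is both unnecessary and aimed at the wrong line: the differentials $d_r$ out of bidegree $(0,0)$ land in $(r,r-1)$, that is on $t-s=-1$, not $t-s=0$. The correct observation is stronger and simpler: $\mathcal A(1)$ is a finite-dimensional Poincar\'e-duality (hence Frobenius, hence self-injective) algebra, and over such an algebra arbitrary direct sums of frees are injective, so a free $\mathcal A(1)$-module is injective in the graded category. Consequently $\mathrm{Ext}^{s}_{\mathcal A(1)}(N,H^*(Y)|_{\mathcal A(1)})=0$ for \emph{all} $s>0$. After change of rings the $E_2$-page is concentrated in filtration zero, the spectral sequence collapses, and the edge homomorphism $[Y,X]_{(2)}\to\mathrm{Hom}_{\mathcal A}(H^*(X),H^*(Y))$ is onto; in particular $\sigma$ is realised.

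Two further points. To see that $H^*(X)=H^*(MSpin\wedge S^8\wedge BG_+)$ is extended you need more than K\"unneth: you need the projection formula $(\mathcal A\otimes_{\mathcal A(1)}M)\otimes Q\cong\mathcal A\otimes_{\mathcal A(1)}(M\otimes Q|_{\mathcal A(1)})$, which uses the Hopf structure of $\mathcal A$, and you should cite it. And for comparison with the source: Stolz's own route in \cite{St2} proceeds through his general splitting theorem for $MSpin$-module spectra, precisely because he needs the reduction to $\mathcal A(1)$ to be natural in $MSpin$-module maps (this is what Pengelley's coalgebra split and the $H^*(ko)$-primitives, discussed in Section 7 of the paper, are for). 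For the bare existence statement of Proposition 8.3 your more direct route suffices once corrected, but it is worth knowing that it is not literally his.
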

From these two results it follows immediately that $t$ induces a split surjection of homology theories (localised at $2$), which has the desired statement as a corollary. 

\subsection{The twisted transfer.}
In this very short section we will sketch the construction of the twisted transfer map, we refer the reader to the exposition in \cite{St} and the references therein for details and proofs, 
which work equally well in the parametrised setting. \\
Recall that given a smooth manifold bundle $p:E \rightarrow B$ with compact $k$-dimensional fibres there is an associated stable Pontryagin-Thom map $\Sigma^\infty B_+ \rightarrow M(-T_vp)$ 
for some choice of inverse of the vertical tangent bundle $T_vp$ of $p$; here $M(-T_vp)$ by our convention from Section \ref{from-surgery-to-cobordism} is considered as a $-k$-connective spectrum (i.e. its lowest non-trivial homotopy group is in degree $-k$). If this inverse of the vertical tangent bundle comes equipped with a $Spin$-structure one obtains a homotopy class 
$S^k \wedge M(-T_vp) \rightarrow MSpin$. Using these two maps we can form the following composition:
\begin{multline*}
\MSpinK \barwedge (S^k \wedge {B_+}) \longrightarrow \MSpinK \barwedge (S^k \wedge M(-T_vp)) \\
\longrightarrow \MSpinK \barwedge MSpin \longrightarrow \MSpinK
\end{multline*}
Interjecting this with the Thom diagonal $M(-T_vp) \rightarrow M(-T_vp) \wedge E_+$ produces the usual integration along the fibres, 
but we shall not need that here. Just as in the untwisted situation the geometric interpretation of the induced map $T_K \colon \Omega^{Spin}_*(X \times B;\zeta \circ \mathrm{pr}_1) \rightarrow \Omega^{Spin}_{*+8}(X;\zeta)$ for $\zeta \colon X \rightarrow K$ in terms of the cycles from Section \ref{fund} is simple enough: It sends
a cycle of the form
$$\xymatrix@-1pc{BO \ar[rrd]_{w} && M \ar[ll] \ar[rr] && X \ar[lld]^\zeta \\
                                    && K                           &&                 }$$
together with a map $g:M \rightarrow B$ to the solidly drawn part of 
$$\xymatrix@-1pc{ BO(k) \times BSpin(l) \ar@{..>}[d]    &&  &&  M \ar@{..>}[d] \\
             BO(k+l) \ar[drr]_{w} && g^*(E) \ar[rr] \ar@{..>}[rru] \ar@{..>}[ull] \ar[ll] && X \ar[dll]^\zeta \\
                                             && K                          & &}$$
where the map $g^*(E) \rightarrow BO(k) \times BSpin(l)$ is given by $g^*(E) \rightarrow M \rightarrow BO(k)$ in the first coordinate and a classifying map for some destabilisation of $-T_vp$ over $g^*(E)$ in the second. 
Note that for some large enough $l$ all such destabilisations produce the same fibre-homotopy class of maps $g^*(E) \rightarrow BO(k+l)$, so the class represented by this cycle does
not depend on the choice made. Furthermore, by our construction of the $BO(i)$ earlier the diagram still commutes strictly.\\
We want to apply this to the bundle mentioned in the introduction: Let $G = PSp(3) = Isom(\mathbb HP^2)$ and consider $EG \times_G \mathbb HP^2 \rightarrow BG$. We obtain the twisted transfer map
$$t_K: \MSpinK \barwedge (S^8 \wedge {BG_+}) \longrightarrow \MSpinK$$
The image of this transfer in $\Omega^{Spin}_*(X,\zeta)$ will consist entirely of manifolds admitting metrics of positive scalar curvature, 
since they are total spaces of bundles over compact manifolds, whose fibres $\mathbb HP^2$ admit such metrics.
Finally, we have the following evident generalisation:
\begin{prop}
 The composition
 $$\MSpinK \barwedge (S^8 \wedge {BG_+}) \stackrel{t_K}\longrightarrow \MSpinK \xrightarrow{{\alpha}_K} \koK$$
 is nullhomotopic.
\end{prop}

\begin{proof}
 Since ${\alpha}_K$ is an $MSpin$-module map, we find the above map equal to 
\begin{multline*}
\MSpinK \barwedge (S^8 \wedge {BG_+}) \longrightarrow \MSpinK \barwedge (S^8 \wedge M(-T_vp)) \\
\longrightarrow \MSpinK \barwedge MSpin \xrightarrow{{\alpha}_K \barwedge \alpha} \koK \barwedge ko \longrightarrow \koK
\end{multline*}
 However, by the Atiyah-Singer index theorem for families the composition $S^8 \wedge BG_+ \rightarrow S^8 \wedge M(-T_vp) \rightarrow MSpin \rightarrow ko$ is null, since it represents the family index of a 
 bundle with uniformly positive scalar curvature in the fibres, as explained in \cite[Section 2]{St}.
\end{proof}

\subsection{Splitting the twisted transfer.}

In order to carry out Stolz' program for the twisted case, one needs to show next that the induced map in (co)homology of any lift of the transfer (to the homotopy fibre of the Atiyah-Bott-Shapiro orientation) splits over $\underline{\mathcal A}$. Already here, however, extending Stolz' results producing such a split to the twisted situation, is surprisingly subtle, as evidenced by theorem \ref{nogo} below. We do have:

\begin{prop}\label{trasur}
 Any lift $\widehat{t_K}:\MSpinK \barwedge (S^8 \wedge {BG_+}) \rightarrow \hofib(\alpha_K)$ of the twisted transfer induces an injection in $mod\ 2$-cohomology and splits over $H^*(K)$.
\end{prop}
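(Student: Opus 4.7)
The plan is to reduce to Stolz's untwisted Proposition 1.3 of \cite{St} via the inclusion of a fibre of $M_2O$ over a chosen point $k_0 \in K$. Fibrewise, the twisted transfer becomes Stolz's untwisted transfer on $MSpin \wedge S^8 \wedge BG_+$, so any lift $\hat t$ restricts to a lift $t$ of the latter through $\hofib(\alpha)$; this gives a commutative square in which, on mod $2$ cohomology, the vertical maps are the reductions modulo the augmentation ideal $H^{>0}(K) \subseteq H^*(K)$.

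The key structural input is that both $H^*(\hofib(\hat\alpha))$ and $H^*(M_2O \wedge BG_+)$ are free graded $H^*(K)$-modules, whose quotients modulo $H^{>0}(K)$ are canonically identified with $H^*(\hofib(\alpha))$ and $H^*(MSpin \wedge BG_+)$ respectively. For $M_2O \wedge BG_+$ this follows by a K\"unneth argument from freeness of $H^*(M_2O)$, which is itself a consequence of the collapse on $E_2 = H^*(K) \otimes H^*(MSpin)$ of the Serre spectral sequence with trivial coefficients established in Lemma \ref{bashom}. For $\hofib(\hat\alpha)$ one exploits that mod $2$ cohomology is automatically $2$-local and invokes the generalised ABP splitting \ref{genspli}: $2$-locally, $\hofib(\hat\alpha)$ is the wedge of the non-$k_2o$ summands of $M_2O$, each of whose cohomology is freely generated over $H^*(K)$ by Theorem \ref{cohok} and its analogue for the other $k_2o\langle n_J \rangle$ (and trivially for the $K \times sh^{n_i} H\mathbb Z/2$ summands).

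Given freeness, the injectivity of $\hat t^*$ reduces to a graded Nakayama argument: any homogeneous $x \in \ker \hat t^*$ reduces modulo $H^{>0}(K)$ to an element of $\ker t^* = 0$ by Stolz, hence lies in $H^{>0}(K) \cdot H^*(\hofib(\hat\alpha))$; iterating together with boundedness from below of cohomological degree forces $x = 0$. For the $H^*(K)$-linear split, choose a $\mathbb Z/2$-basis $\{\bar v_i\}$ of $H^*(\hofib(\alpha))$ and extend $\{t^*(\bar v_i)\}$ to a $\mathbb Z/2$-basis of $H^*(MSpin \wedge BG_+)$ by complementary elements $\{\bar u_j\}$; lift the former to an $H^*(K)$-basis $\{v_i\}$ of $H^*(\hofib(\hat\alpha))$ and lift the latter to arbitrary elements $\{u_j\}$ of $H^*(M_2O \wedge BG_+)$. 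A second Nakayama application shows that $\{\hat t^*(v_i)\} \cup \{u_j\}$ is then an $H^*(K)$-basis of the target, and the $H^*(K)$-linear projection onto the span of $\{\hat t^*(v_i)\}$ furnishes the desired retraction.

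The main obstacle will be the freeness statement for $H^*(\hofib(\hat\alpha))$ over $H^*(K)$, since verifying it uniformly requires combining the $2$-local spectrum-level ABP splitting with the explicit cohomology calculations for all the twisted connective covers $k_2o\langle n_J \rangle$, some of which have not been carried out in the paper at this point and may need to be supplied (or at least confirmed to yield free $H^*(K)$-modules). Once freeness is in place, the injection and the $H^*(K)$-linear split are routine consequences of Stolz's untwisted theorem and graded Nakayama bookkeeping.
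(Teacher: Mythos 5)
Your strategy matches the paper's in outline: establish freeness of domain and codomain over $H^*(K)$, pass to $H^*(K)$-indecomposables to land in Stolz's untwisted situation, and use graded Nakayama bookkeeping to lift the split. The genuine difference is in how you obtain freeness of $H^*(\hofib(\hat\alpha))$. You invoke the full $2$-local spectrum-level ABP splitting, identify $\hofib(\hat\alpha)$ with the wedge of the remaining summands, and then appeal to the explicit cohomology computations of the individual $k_2o\langle n_J\rangle$. The paper instead deduces freeness directly from the short exact sequence
$$0 \longrightarrow H^*(k_2o) \stackrel{\hat\alpha^*}\longrightarrow H^*(M_2O) \longrightarrow H^*(\hofib(\hat\alpha)) \longrightarrow 0$$
(split-injectivity of $\hat\alpha^*$ being the only input needed from \ref{genspli}) together with Milnor--Moore's \cite[Proposition 1.7]{MiMo}: since any lift of a basis of $H^*(K)$-indecomposables is automatically an $H^*(K)$-basis, one may choose a basis of $H^*(M_2O)$ extending a basis of the image of $H^*(k_2o)$, whence the quotient is free—no summand-by-summand analysis is required. (The paper does record a version of your route as an ``alternative'' argument.) Also, your worry that the needed computations for the $k_2o\langle n_J\rangle$ have not been carried out is unfounded: the numbers $n_J$ occurring in the splitting are always $\equiv 0$ or $2 \pmod 8$, which are precisely the cases handled by Theorem \ref{cohok} and the theorem preceding it. Finally, your basis-lifting construction of the split and the paper's use of three $\mathbb Z/2$-splits plus Stolz's $s$ to build $\hat s$ (an isomorphism by virtue of inducing the identity on indecomposables) are the same Nakayama argument packaged slightly differently.
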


\begin{proof}
Oberserve that both domain and target are free $H^*(K)$-modules and the map is split injective on $H^*(K)$-indecomposables: First of all $H^*(\MSpinK)$ is free over $H^*(K)$ by \cite[Proposition 1.7]{MiMo}. 
It immediately follows that the domain is free as well. For the codomain, notice first that we have short exact sequences
$$0 \longrightarrow H^*(\koK) \stackrel{\alpha_K^*}\longrightarrow H^*(\MSpinK) \longrightarrow H^*(\hofib(\alpha_K)) \longrightarrow 0$$
since $\alpha_K$ is split injective in cohomology by \ref{genspli}. Since by \cite[Proposition 1.7]{MiMo} any system of representatives for the $H^*(K)$ indecomposables forms a basis, we see that we can actually choose the $H^*(K)$-basis of $H^*(\MSpinK)$ 
to contain a $H^*(K)$-basis of the image of $H^*(\koK)$, which gives the claim. \\
Alternatively, our description of $H^*(\MSpinK)$ in terms of $\underline{\mathcal A}$, $\underline{\mathcal A}/(\varphi(Sq^1), \varphi(Sq^2))$ and $\underline{\mathcal A}/\varphi(Sq^3)$ is compatible with $\alpha_K$ by construction. Since all three building blocks 
are obviously free over $H^*(K)$ on the appropriate Steenrod operations the claim follows. \\
Finally, taking the quotient by the $H^*(K)$-decomposables in the situation of the twisted transfer map exactly produces the untwisted situation, in which Stolz has shown the transfer to be split injective (over $\mathcal A$).
Calling such a split $s$, we arrive at the following diagram
$$\begin{xy}\xymatrix{
 H^*(\hofib(\alpha_K)) \ar@{->>}[d]          \ar[r]^-{\widehat{t_K}^*} & H^{*-8}(\MSpinK  \barwedge BG_+) \ar@{->>}[d]                     \\
 H^*(\hofib(    \alpha)) \ar@{..>}@/^1pc/[u] \ar[r]^-{\hat t^*}        & H^{*-8}(MSpin \wedge BG_+) \ar@{..>}@/^1pc/[l]^-s \ar@{..>}@/_1pc/[u] }
\end{xy}$$
where the upwards arrows just denote $\mathbb Z/2$-splits of the downwards maps. Map then the image of the split on the right to $H^*(\hofib(\alpha_K))$ using the other two splits and extend the resulting map $H^*(K)$-linearly; this is possible because of the freeness of the upper right corner over $H^*(K)$. We have thus produced a map 
\[{s_K}: H^{*-8}(\MSpinK  \barwedge BG_+) \longrightarrow H^*(\hofib(\alpha_K)),\]
such that $s_K \circ \widehat{t_K}^*$ induces the identity on indecomposables and therefore is an isomorphism.
The map $(s_K \circ \widehat{t_K}^*)^{-1} \circ s_K$ now is a split of $\widehat{t_K}^*$.
\end{proof}

\begin{rem}\label{aftertrasur}
If one could choose the vertical splits in the above square diagram to be $\mathcal A$-linear, the split $s_K$ constructed would be $\underline{\mathcal A}$-linear. 
This is, however, clearly impossible since for example $Sq^1$ acts trivially on the unique nontrivial class in $H^8(\hofib(\alpha))$ but nontrivially on the unique nonzero class in $H^8(\hofib(\alpha_K))$.
\end{rem}

To see that the situation in the twisted context, however, behaves somewhat differently from the untwisted one let us recall how Stolz produced a split of the untwisted transfer map in \cite{St}. He used the observation of Pengelley that there exists a certain split $r$ of $\alpha^*: H^*(ko) \rightarrow H^*(MSpin)$, which is a map of coalgebras. 
This split $r$ makes it possible to define the $H^*(ko)$-primitives $\overline{H^*(X)}$ in the cohomology of any $MSpin$-module spectrum $X$. These primitives form an $\mathcal A(1)$-submodule of $H^*(X)$ and Stolz showed the following structural statement.

\begin{uprop}[Corollary 5.5 \cite{St}]
For any $MSpin$-module spectrum $X$ with bounded below and degreewise finite cohomology the obvious map
$$H^*(X) \longleftarrow \mathcal A \otimes_{\mathcal A(1)} \overline{H^*(X)}$$
is an isomorphism, that is natural in $MSpin$-module maps.
\end{uprop}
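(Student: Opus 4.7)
The strategy is to realise this as a Milnor--Moore-style change-of-rings isomorphism, driven by Pengelley's coalgebra split $r \colon H^*(MSpin) \to H^*(ko)$ together with the identification $H^*(ko) \cong \mathcal A /\!/ \mathcal A(1)$ coming from Stong's calculation already used in Section~\ref{seccoh}.

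First I would set up an $H^*(ko)$-coaction $\rho$ on $H^*(X)$ as the composite
\[
\rho \colon H^*(X) \xrightarrow{\mu^*} H^*(X) \otimes H^*(MSpin) \xrightarrow{\mathrm{id} \otimes r} H^*(X) \otimes H^*(ko),
\]
where $\mu \colon X \wedge MSpin \to X$ denotes the module structure map. Coassociativity and counitality of $\rho$ follow from associativity and unitality of $\mu$ combined with $r$ being a coalgebra map. The primitives are then defined as
\[
\overline{H^*(X)} = \{\, x \in H^*(X) : \rho(x) = x \otimes 1 \,\}.
\]

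Next I would verify that $\overline{H^*(X)}$ is stable under $\mathcal A(1) \subset \mathcal A$. Although $\rho$ is not $\mathcal A$-linear with respect to the diagonal action on $H^*(X) \otimes H^*(ko)$ (because $r$ is not $\mathcal A$-linear), it is $\mathcal A(1)$-linear: for $a \in \mathcal A(1)$, the coproduct $\Delta(a)$ remains in $\mathcal A(1) \otimes \mathcal A(1)$ since $\mathcal A(1)$ is a sub-Hopf-algebra, and the resulting terms $a'' \cdot 1 \in H^*(ko)$ collapse by the defining relation of $\mathcal A /\!/\mathcal A(1)$. Computing via the Cartan formula yields $\rho(a \cdot x) = a \cdot x \otimes 1$ for primitive $x$, so $\overline{H^*(X)}$ inherits a natural $\mathcal A(1)$-submodule structure.

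Third I would show that the obvious multiplication map $\Phi \colon \mathcal A \otimes_{\mathcal A(1)} \overline{H^*(X)} \to H^*(X)$ is an isomorphism. By Milnor--Moore, $\mathcal A$ is free over $\mathcal A(1)$ of rank $\dim H^*(ko)$ on the right; combined with the bounded-below and degreewise-finite hypotheses, this reduces the problem to surjectivity plus a Poincar\'e series match. For surjectivity I would invoke Pengelley's finer decomposition $H^*(MSpin) \cong H^*(ko) \otimes C$ as a coalgebra (of which $r$ is the first projection), use $\mu^*$ to decompose an arbitrary $x \in H^*(X)$ accordingly, and iterate on the kernel of the reduced coaction. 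Naturality in $MSpin$-module maps is then immediate from the construction, since $\mu^*$ and $r$ are natural in that category.

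The main obstacle is the surjectivity step: the bare existence of a coalgebra split $r$ is insufficient; one really needs Pengelley's structural result identifying $H^*(MSpin)$ with $H^*(ko) \otimes C$ for a complementary subcoalgebra $C$, and one must verify that this freeness is inherited by $H^*(X)$ through $\mu^*$. Dually, this amounts to showing $H^*(X)$ is cofree as an $H^*(ko)$-comodule, which is the technical heart of the argument and where the $MSpin$-module hypothesis on $X$ does its real work.
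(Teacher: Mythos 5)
The paper does not prove this proposition; it is quoted verbatim from Stolz's paper \cite{St} (his Corollary 5.5), so there is no in-paper argument to compare against. Your overall strategy --- build the $H^*(ko)$-coaction $\rho = (\mathrm{id} \otimes r)\circ \mu^*$ from Pengelley's coalgebra split $r$, define $\overline{H^*(X)}$ as the coinvariants, and argue via a Milnor--Moore change-of-rings isomorphism using $H^*(ko)\cong\mathcal A\otimes_{\mathcal A(1)}\mathbb Z/2$ --- is indeed the route Stolz follows.

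The step on $\mathcal A(1)$-stability is where a genuine gap lies. You claim $\rho$ is $\mathcal A(1)$-linear and justify it by observing that $\Delta(a)$ stays in $\mathcal A(1)\otimes\mathcal A(1)$ for $a\in\mathcal A(1)$ and that $\mathcal A(1)^+$ annihilates the generator of $H^*(ko)$. But that only controls part of the computation. Writing $\mu^*(x)=x\otimes u+B$ with $u$ the Thom class and $B\in H^*(X)\otimes\ker(r)$ for a primitive $x$, applying $Sq^i$ ($i=1,2$) followed by $\mathrm{id}\otimes r$ produces $Sq^ix\otimes r(u)$ from the first term (using $Sq^1u=Sq^2u=0$ in $H^*(MSpin)$), \emph{plus} contributions of the form $b_j\otimes r(Sq^ik_j)$ with $k_j\in\ker(r)$ coming from $B$. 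For those to vanish one needs $\ker(r)$ closed under $Sq^1$ and $Sq^2$, i.e.\ $r$ to be $\mathcal A(1)$-linear. That is an extra structural property of Pengelley's split which does not follow from its being a coalgebra retraction; your sketch uses it implicitly but never states or verifies it, and it is precisely the input that makes the remainder of the change-of-rings argument work. You also correctly flag surjectivity of $\mathcal A\otimes_{\mathcal A(1)}\overline{H^*(X)}\to H^*(X)$ as the technical heart, and that too is left open in the sketch; neither gap is a formal coproduct manipulation, and both need genuine input from Pengelley's freeness results.
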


In order to split the transfer it therefore suffices to split its induced map on $H^*(ko)$-primitives $\mathcal A(1)$-linearly, which Stolz does by direct computation, using the fact that $\overline{H^*(MSpin)} \cong M$, which in turn he seems to derive from the Anderson-Brown-Peterson splitting (see, however, \ref{gap} for a discussion of this). \\
One can now hope that one might use some form of $H^*(\koK)$-primitives in the general situation, reducing the splitting of the transfer to the very same $\mathcal A(1)$-modules Stolz considered. This hope, however, is in vain, as we shall explain next.

We saw in remark \ref{honmul} that $H_*(\MSpinK)$ and similarly $H_*(\koK)$ carry multiplicative structures, agreeing with that of $H_*(MO)$ in the former case. 
Dualising our cohomological calculations from above one can perform some low-dimensional calculations and find:

\begin{prop}[Proposition I.6.4.1 \cite{He}]\label{noringsplit}
The homology of $\koK$ contains an element $0 \neq y \in H_2(\koK)$ with $y^4 = 0$. In particular, there can be no surjective homomorphism $H^*(\MSpinK) \rightarrow H^*(\koK)$ of coalgebras.
\end{prop}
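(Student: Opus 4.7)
The second assertion is a formal consequence of the first via duality. Because both $H^*(M_2O)$ and $H^*(k_2o)$ are of finite type (see \ref{bashom}), a coalgebra surjection $\phi: H^*(M_2O) \twoheadrightarrow H^*(k_2o)$ dualises to an algebra injection $\phi^\vee: H_*(k_2o) \hookrightarrow H_*(M_2O)$. By $\Theta M_2O \simeq MO$ and \ref{honmul}, the target is Thom's classical polynomial algebra $H_*(MO;\mathbb Z/2)$, which is reduced, so a nonzero $y \in H_2(k_2o)$ with $y^4 = 0$ would be forced to $0$ under $\phi^\vee$, contradicting injectivity. The task is thus to exhibit $y$.

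My plan for producing $y$ is to identify the coalgebra structure on $H^*(k_2o)$ explicitly and then dualise. From the Poincar\'e-series computation of \ref{bashom} one has $\dim H_2(k_2o) = 2$; representatives for a basis of $H^2(k_2o)$ are extracted from Theorem \ref{cohok} by modding $\varphi(Sq^1)$ and $\varphi(Sq^2)$ out of $\underline{\mathcal A}$ through degree $2$, leaving the classes of $\iota_1^2 \otimes 1$ and $\iota_2 \otimes 1$. The coproduct on $H^*(k_2o)$ descends from the Hopf coproduct on $\underline{\mathcal A} \cong H^*(K) \otimes \mathcal A$; I would verify the legitimacy of this descent by computing $\Delta(\varphi(Sq^1)) = \varphi(Sq^1)\otimes 1 + 1\otimes\varphi(Sq^1)$ and $\Delta(\varphi(Sq^2)) = \varphi(Sq^2)\otimes 1 + 1\otimes\varphi(Sq^2) + \varphi(Sq^1)\otimes\varphi(Sq^1)$, so the ideal these generate is a coideal. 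The coproduct on the $H^*(K)$-part is dual to the monoid multiplication of \ref{twimul}: $\iota_1$ is primitive and $\iota_2 \mapsto \iota_2 \otimes 1 + 1 \otimes \iota_2 + \iota_1 \otimes \iota_1$.

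I would take $y$ to be the class in $H_2(k_2o)$ dual to the coset of $\iota_1^2 \otimes 1$. The conceptual engine for the vanishing of $y^4$ is the divided-power structure on $H_*(K(\mathbb Z/2,2);\mathbb Z/2)$: pairing analysis (using that $\Delta(\iota_2^2) = \iota_2^2\otimes 1 + 1\otimes\iota_2^2 + \iota_1^2\otimes\iota_1^2$ is the only degree-$4$ coproduct in $H^*(K)$ carrying the cross term $\iota_1^2\otimes\iota_1^2$) shows that $y^2$ is dual to $\iota_2^2$, i.e.\ at the $H_*(K)$-level corresponds to $e_0 \otimes f_2^{(2)}$, where $f_2^{(n)}$ denotes the $n$-th divided power of the fundamental class $f_2 \in H_2(K(\mathbb Z/2,2))$. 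The divided-power identity $f_2^{(2)}\cdot f_2^{(2)} = \binom{4}{2} f_2^{(4)} = 0 \in \mathbb Z/2$, together with the observation that the submonoid $\{0\} \times K(\mathbb Z/2,2) \subset K$ inherits from \ref{twimul} exactly the multiplication of $K(\mathbb Z/2,2)$, forces $y^4 = 0$ at the level of $H_*(K)$. Dually, this reflects the Frobenius-vanishing of the multinomial coefficient $\binom{4}{2,2,0}$: no $\alpha \in H^8(K)$ has a $(1 \otimes \iota_2^2) \otimes (1 \otimes \iota_2^2)$ component in $\mu^*(\alpha)$.

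The hard part will be upgrading this $H_*(K)$-calculation to $H_*(k_2o)$: one must verify that for every basis class $z' \in H^8(k_2o)$, organised via the splitting of \ref{genspli} into $\mathcal A/\!/\mathcal A(1)$-summands tensored with $H^*(K)$, the iterated coproduct $\Delta^{(3)}(z') \in H^*(k_2o)^{\otimes 4}$ has vanishing coefficient on every tensor whose four slots each represent $\bar{\iota_1^2\otimes 1}$. This is necessary because the relations $\varphi(Sq^1)\kappa = \varphi(Sq^2)\kappa = 0$ identify cohomology cosets but do not a priori collapse \emph{homology} nilpotents. The exhaustive check is carried out in \cite[Prop.~I.6.4.1]{He}; it is conceptually driven by the Frobenius identity $(x + y)^{2^k} = x^{2^k} + y^{2^k}$ in characteristic $2$, which distributes $\iota_1^2$-content in iterated coproducts of degree-$8$ polynomial monomials in $\iota_1, \iota_2, Sq^I\iota_2$ over at most two of the four tensor slots, and by the divided-power computation above.
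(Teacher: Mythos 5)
Your overall strategy coincides with the paper's (which is itself only a sketch): in both cases the point is that $H_*(k_2o)$ embeds as a subring into a polynomial-tensor-divided-power algebra where one can see a nilpotent by hand, and in both cases the nilpotent comes from the divided-power structure on $H_*(K(\mathbb Z/2,2))$. Your deduction of the second assertion from the first, via the injection $H_*(k_2o)\hookrightarrow H_*(MO)$ and the reducedness of Thom's polynomial algebra, is slightly different from the paper's appeal to Theorem \ref{nogo}, but it is cleaner and self-contained, so no objection there.

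The issue is in the final paragraph, where you misidentify (and thereby overcomplicate) the remaining work. The clean route is the one the paper names: the Postnikov section $k_2o\to K\times H\mathbb Z/2$ gives an \emph{injective ring map} $H_*(k_2o)\hookrightarrow H_*(K)\otimes \mathcal A_*$, so $y^4=0$ in $H_*(k_2o)$ exactly when it holds of the image. Computing $H_2$ of the image (i.e.\ the annihilator in degree $2$ of $\ker(\underline{\mathcal A}\to H^*(k_2o))$) shows that your $y$ maps to $b_2\otimes 1 + a\otimes\xi_1$, where $a,b_2\in H_*(K)$ are dual to $\iota_1,\iota_1^2$ and $\xi_1=(Sq^1)^*\in\mathcal A_*$. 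By Frobenius $y^4 = b_2^4\otimes 1 + a^4\otimes\xi_1^4$, and since $\xi_1^4\neq 0$ you need \emph{both} $b_2^4=0$ and $a^4=0$ in $H_*(K)$. Your divided-power argument (via the submonoid $\{0\}\times K(\mathbb Z/2,2)\subset K$) gives $b_2^2 = f_2^{(2)}$ and hence $b_2^4 = (f_2^{(2)})^2 = \binom{4}{2}f_2^{(4)} = 0$. The missing observation is that the \emph{same} argument handles $a$: $a^2$ is dual to $\iota_2$ (the cross term $\iota_1\otimes\iota_1$ in $\mu^*(\iota_2)$ is what makes $a^2\neq 0$), i.e.\ $a^2$ is the image of $f_2$, and $f_2^2 = \binom{2}{1}f_2^{(2)} = 0$. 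So no exhaustive check of iterated coproducts of degree-$8$ classes in $H^*(k_2o)$ is required; as written you risk proving only $b_2^4 = 0$ while $y^4$ still carries the untouched $a^4\otimes\xi_1^4$ summand.

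One further minor imprecision worth flagging: $y^2$ is not literally ``dual to $\iota_2^2$''; its image in $H_*(K)\otimes\mathcal A_*$ is $(\iota_2^2)^*\otimes 1 + (\iota_2)^*\otimes\xi_1^2$. Your pairing computation only identifies the $H_*(K)\otimes 1$-component, which is again the signal that the $\mathcal A_*$-factor was being dropped.
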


\begin{proof}[Sketch of proof]
This is a straight forward computation given that the Postnikov-section $\koK \rightarrow K \times H\mathbb Z/2$ induces an injection in homology with known image by \ref{632}; $y$ is the preimage of $x_1 \otimes \zeta_1 + x_2 \otimes 1$, where $x_1$ is dual to $\iota_1 \in H^1(K)$, $x_2$ to $\iota_1^2$ and $\zeta_1$ to $Sq^1$. \\ 
The implication that there can be no coalgebra-split of the twisted Atiyah-Bott-Shapiro orientation $H^*(\koK) \rightarrow H^*(\MSpinK)$ can in fact also be inferred from the next proposition, since the existence of a split would allow a generalisation of Stolz' proposition above contradicting \ref{nogo}.
\end{proof}

One therefore has to make do with the $H^*(ko)$-primitives, which for a parametrised $MSpin$-module spectrum $X$ form an $\underline{\mathcal A(1)}$-submodule of $H^*(X)$. With the same consideration as in \ref{trasur} it is easy to see, that the transfer still splits over $H^*(K)$ on the module of primitives and it is also readily seen that both domain and codomain are free over $\mathcal A(1)$. Since free $\mathcal A(1)$-modules are injective we find:

\begin{prop}
The induced map
$$\overline{H^*(\hofib(\alpha_K))} \xrightarrow{\widehat{t_K}^*} \overline{H^{*-8}(\MSpinK  \barwedge BG_+)}$$
splits separately over both $\mathcal A(1)$ and $H^*(K)$.
\end{prop}

With these considerations in place it seems natural to return to the square from the proof of \ref{trasur}, and take $H^*(ko)$-primitives everywhere to try and produce a split over $\underline{\mathcal A(1)}$:
$$\begin{xy}\xymatrix{
 \overline{H^*(\hofib(\alpha_K))} \ar@{->>}[d]          \ar[r]^-{\widehat{t_K}^*} & \overline{H^{*-8}(\MSpinK  \barwedge BG_+)} \ar@{->>}[d]                     \\
 \overline{H^*(\hofib(    \alpha))} \ar@{..>}@/^1pc/[u] \ar[r]^-{\widehat t^*}        & \overline{H^{*-8}(MSpin \wedge BG_+)} \ar@{..>}@/^1pc/[l]^-s \ar@{..>}@/_1pc/[u] }
\end{xy}$$
where a comparison of the Poincar\'e-series reveals that the downward maps are again isomorphisms after passing to $H^*(K)$-indecomposables in the upper row. 
In the proof of \ref{trasur} we only used that the upwards maps can be chosen $\mathbb Z/2$-linear and arguing similarly to remark \ref{aftertrasur} we see that the transfer would split over $\underline{\mathcal A}$ if they could be chosen $\mathcal A(1)$-linear (which of course they cannot).
It seems, however, a reasonable guess that they may be chosen $\varphi$-linear, i.e. such that $s(a \cdot m) = \varphi(a) \cdot s(m)$, (which would still yield an $\underline{\mathcal A}$-linear split of the transfer!) for the following reason:
From Stolz' result above we see
\begin{align*}
 H^*(\MSpinK) &\cong \underline{\mathcal A} \otimes_{\underline{\mathcal A(1)}} \overline{H^*(\MSpinK)}
\intertext{as modules over the twisted Steenrod algebra, whereas our generalisation of the Anderson-Brown-Peterson directly yields}
 H^*(\MSpinK) &\cong \underline{\mathcal A}_\varphi \otimes_{\mathcal A(1)} M \\
           &\cong \underline{\mathcal A} \otimes_{\underline{\mathcal A(1)}} \left[\underline{\mathcal A(1)}_\varphi \otimes_{\mathcal A(1)} M \right]
\end{align*}
where $M$ denotes the Anderson-Brown-Peterson-module (i.e. a certain direct sum of $\mathbb Z/2$'s, jokers and a free $\mathcal A(1)$-module) and similarly for the derivatives of $H^*(\MSpinK)$ that occur in the analysis of the transfer. This strongly suggests 
$$\underline{\mathcal A(1)}_\varphi \otimes_{\mathcal A(1)} M \cong \overline{H^*(\MSpinK)}$$
and the left hand side in particular admits a $\varphi$-linear split of the projection to its $H^*(K)$-indecomposables. Indeed, $\varphi$-linear splits of the projection 
$$\overline{H^*(\MSpinK)} \rightarrow \overline{H^*(MSpin)}$$
 are easily seen to be in one-to-one correspondence with isomorphisms as just predicted. But alas, this hope is dashed by the following very surprising result:

\begin{thm}[Theorem II.4.2.1 \cite{He}]\label{nogo}
There is no isomorphism $$\underline{\mathcal A(1)}_\varphi \otimes_{\mathcal A(1)} \overline{H^*(MSpin)} \cong \overline{H^*(\MSpinK)}$$
of $\underline{A(1)}$-modules.
\end{thm}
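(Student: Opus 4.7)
The strategy is to reformulate the putative isomorphism as the existence of a $\varphi$-linear splitting and then to obstruct this splitting by a direct low-degree computation. Both sides have the same Poincar\'e series: by Lemma \ref{twimod} the map $\varphi$ extends to an algebra automorphism of $\underline{\mathcal A(1)}$, so $\underline{\mathcal A(1)}_\varphi \otimes_{\mathcal A(1)} M$ has the same graded dimension as $\underline{\mathcal A(1)} \otimes_{\mathcal A(1)} M \cong H^*(K) \otimes M$, and Corollary~\ref{cohom} combined with Stolz' extended-module isomorphism $H^*(X) \cong \underline{\mathcal A} \otimes_{\underline{\mathcal A(1)}} \overline{H^*(X)}$ forces $\overline{H^*(M_2O)}$ to have the same dimensions. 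Hence the obstruction must be structural rather than numerical.

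First, I would observe that any isomorphism $\underline{\mathcal A(1)}_\varphi \otimes_{\mathcal A(1)} M \xrightarrow{\cong} \overline{H^*(M_2O)}$ restricted to $1 \otimes M$ produces a $\varphi$-linear section of the projection $\overline{H^*(M_2O)} \twoheadrightarrow \overline{H^*(M_2O)}/(H^+(K) \cdot \overline{H^*(M_2O)}) \cong M$. Concretely, for every $\mathcal A(1)$-generator $m \in M$ one would obtain a lift $\tilde{m} \in \overline{H^*(M_2O)}$ whose $\underline{\mathcal A(1)}$-annihilator contains $\varphi(\mathrm{Ann}_{\mathcal A(1)}(m))$. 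Conversely, such a collection of compatible $\varphi$-linear lifts assembles into the desired isomorphism by freeness of $M$ along its generating set. So the theorem is equivalent to showing that no such compatible family exists.

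Next I would compute $\overline{H^*(M_2O)}$ in low degrees via the Thom isomorphism $H^*(M_2O) \cong H^*(BO)$ and the $\varphi$-twisted Steenrod action described in Section~\ref{seccoh}, together with Pengelley's coalgebra splitting inherited from the $H^*(k_2o)$-coaction. For a chosen generator $m \in M$, every candidate lift has the form $\tilde{m} = m_0 + \iota_1 c_1 + \iota_2 c_2 + \iota_1^2 c_3 + \ldots$ with $c_i \in \overline{H^*(M_2O)}$ of strictly lower degree, and each required relation $\varphi(a)\tilde{m} = 0$ with $a \in \mathrm{Ann}_{\mathcal A(1)}(m)$ becomes a finite linear system in the $c_i$. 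Using the explicit formulas $\varphi(Sq^1) = 1 \otimes Sq^1 + \iota_1 \otimes 1$ and $\varphi(Sq^2) = 1 \otimes Sq^2 + \iota_1 \otimes Sq^1 + (\iota_1^2 + \iota_2) \otimes 1$, the cross-terms in $\iota_1, \iota_1^2$ and $\iota_2$ can be tracked directly through the relations of $\mathcal A(1)$.

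The main obstacle, and the technical heart of the argument, is to pick the correct generator $m$ for which this linear system is provably inconsistent. A joker summand $\mathcal A(1)/Sq^3 \subset M$ is the natural candidate because its single defining relation $Sq^1 Sq^2 \cdot m = 0$ interacts with $\varphi$ through both of the nontrivial replacements above, producing multiple coupled conditions on the correction coefficients $c_i$; in contrast, the $\mathbb Z/2$- and free summands admit compatible lifts essentially by inspection (as already seen for the Thom class). I expect the inconsistency to manifest as a residual $\iota_2$-term that cannot be absorbed by any choice of lower-degree corrections. Exhibiting this specific class in a single degree --- the content of Theorem II.4.2.1 of \cite{He} --- suffices to complete the proof.
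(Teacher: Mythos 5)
Your opening reformulation is correct and matches the paper's own setup (a $\underline{\mathcal A(1)}$-isomorphism corresponds exactly to a $\varphi$-linear section of the projection of $\overline{H^*(M_2O)}$ onto its $H^*(K)$-indecomposables $\cong M$), and the Poincar\'e-series remark is the right sanity check. But the proposal stops short of a proof: it explicitly defers the ``technical heart'' --- exhibiting the obstruction --- to the cited reference, and the heuristic you offer for where to look does not match where the obstruction actually lies.

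Concretely, the paper does not stay in cohomology and does not analyse a linear system in correction terms $c_i$. It dualises to homology and de-Thom-ifies, reducing the claim to: the inclusion $\overline{H_*(BSpin)} \hookrightarrow \overline{H_*(BO)}$ (overline now meaning $H_*(ko)$-\emph{in}decomposables) is \emph{not} split as a map of right $\mathcal A(1)$-modules. This is detected by a purely numerical obstruction: if a split existed, the induced map on $\mathcal A(1)$-indecomposables would have to be injective, but it fails to be so in degree $16$. The culprit is the class $x_2^8$ (in the Giambalvo--Pengelley--Ravenel generating system), which is $\mathcal A(1)$-indecomposable in $\overline{H_*(BSpin)}$ but becomes decomposable in $\overline{H_*(BO)}$ because $Sq_2(x_{18}) = x_2^8$ there. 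In particular, the obstruction sits in degree $16$, where the ABP module $M$ has only $\mathbb Z/2$- and free summands; your guess that a joker generator with an un-absorbable $\iota_2$-term is the culprit is therefore misdirected, and your parenthetical ``the $\mathbb Z/2$- and free summands admit compatible lifts essentially by inspection (as already seen for the Thom class)'' is not justified for the higher-degree $\mathbb Z/2$-summands --- it is exactly there that the argument must do work.

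Put differently: the subtlety is not that an individual generator of $M$ refuses to lift $\varphi$-linearly (the discussion before Theorem~\ref{nogo} and the constructions of the $\overline\theta_J$ suggest each generator does admit a lift with the expected annihilator); it is that such lifts cannot be assembled into an $\underline{\mathcal A(1)}$-linear isomorphism, and the cleanest way to see this is via the dual, de-Thom-ified indecomposables picture where the failure is a one-line observation about the polynomial algebra $H_*(BO)$. Your framework could probably be made to reproduce the paper's argument after dualising, but as written the proposal contains a genuine gap --- the obstruction is neither identified nor located correctly.
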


\begin{proof}[Sketch of proof]
After switching to homology and applying the inverse of the Thom isomorphism the existence of an isomorphism as in the statement is equivalent to
$$H_*(K) \otimes \overline{H_*(BSpin)} \cong \overline{H_*(BO)}$$
as $\underline{\mathcal A(1)}$-modules, where the action of $\underline{\mathcal A(1)}$ on $\overline{H_*(BSpin)}$ factors through $\mathcal A(1)$; the overline here indicates $H_*(ko)$-indecomposables. 

Under the additional assumption that the isomorphism $\underline{\mathcal A(1)}_\varphi \otimes_{\mathcal A(1)} \overline{H^*(MSpin)} \cong \overline{H^*(\MSpinK)}$ commutes with the projections to $\overline{H^*(MSpin)}$, the isomorphism in homology will commute with the inclusions of $\overline{H_*(BSpin)}$. This would in particular yield a splitting of the inclusion $\overline{H_*(BSpin)} \rightarrow \overline{H_*(BO)}$ over $\mathcal A(1)$, which can be excluded by observing that the induced map on $\mathcal A(1)$-indecomposables fails to be injective in degree $16$. In the notation of \cite{He, St2} (originating from \cite{GiPeRa}), the element $x_2^8 \in \overline{H_{16}(BSpin)}$ is indecomposable, but becomes decomposable already in $\overline{H_{16}(BSO)}$, since there $Sq_2(x_{18}) = x_2^8$. 

To remove the additional assumption it is shown in \cite[Theorem II.4.2.1]{He} that the $H^*(K(\mathbb Z/2,1))$-indecomposables of the two modules in question are not isomorphic as $\mathcal A(1)$-modules; applying the Thom isomorphism  the duals of these indecomposables are given by
\[H_*(K(\mathbb Z/2,2)) \otimes \overline{H_*(BSpin)} \quad \text{and} \quad \overline{H_{*}(BSO)}\] 
and the image of $x_{18} \in H_{18}(BSO)$ in $H_{18}(K(\mathbb Z/2,2))$ yields the element $x_{18} \otimes 1$ on the left hand side. Direct computations now show that it is a $Sq_2$- and $Q_1$-cycle, whose $Sq_1$ is not a $Q_1$-boundary, whereas no such element exists in $\overline{H_{18}(BSO)}$.
\end{proof}

\begin{rem}\label{gap}
With the above theorem in mind, one may wonder how Stolz derives the isomorphism $\overline{H^*(MSpin)} \cong M$ from the isomorphism 
\[\mathcal A \otimes_{\mathcal A(1)} \overline{H^*(MSpin)} \cong \mathcal A \otimes_{\mathcal A(1)} M\] 
given by the Anderson-Brown-Peterson-splitting and \cite[Corollary 5.5]{St}. The existence of said isomorphism is stated in \cite[Corollary 6.4]{St} without proof and used extensively in \cite{St}, but in fact does not directly follow from the context. However, in contrast to the twisted situation, the result is true as claimed; the first author provided a proof in \cite[Section 3.1]{He}.
\end{rem}

One may now wonder what these negative results leave one with. In forthcoming work we will demonstrate that from the isomorphisms
$$\underline{\mathcal A} \otimes_{\underline{\mathcal A(1)}} \overline{H^*(\MSpinK)} \cong H^*(\MSpinK) \cong \underline{\mathcal A} \otimes_{\underline{\mathcal A(1)}} \left[\underline{\mathcal A(1)}_\varphi \otimes_{\mathcal A(1)} M \right]$$
one can construct a filtration on $\overline{H^*(\MSpinK)}$ of $\underline{\mathcal A(1)}$-coalgebras, whose associated graded $\underline{\mathcal A(1)}$-module is isomorphic to $\underline{\mathcal A(1)}_\varphi \otimes_{\mathcal A(1)} M$ and compatible with the map induced by the transfer. Therefore the strategy indicated above leads to a
$\underline{\mathcal A(1)}$-linear split $s$ of the induced transfer on the associated graded modules
$$\xymatrix{E\big(\overline{H^*(\hofib(\alpha_K))}\big) \ar[r]^-{E(\widehat{t_K}^*)} & \ar@/_1.5pc/[l]_{s_K} E\big(\overline{H^{*-8}(\MSpinK  \barwedge BG_+)}\big)}$$
Associated to this filtration is a spectral sequence, which determines whether such an $s_K$ can be descended to a map on the actual modules. The analysis of that spectral sequence and then of the parametrised Adams spectral sequence, which controls whether a split in homology comes from a map of the underlying spectra is ongoing work of the first author and Stephan Stolz. All in all we therefore hope to return to the analysis of the twisted $\mathbb HP^2$-transfer map in future work.

\appendix

\section{A calculation from Section \ref{calc}}

We here finish the purely calculational proofs of lemmata \ref{twiemb} and \ref{twimod}. The former of which reads as follows:

\begin{ulem}[\ref{twiemb}]
The stipulation
\begin{align*}
Sq^1 &\longmapsto 1 \otimes Sq^1 + \iota_1 \otimes 1 \\
Sq^2 &\longmapsto 1 \otimes Sq^2 + \iota_1 \otimes Sq^1 + \iota_1^2 \otimes 1 + \iota_2 \otimes 1
\end{align*}
specifies a unique morphism of Hopf algebras $\varphi: \mathcal A(1) \longrightarrow \underline{\mathcal A}$.
\end{ulem}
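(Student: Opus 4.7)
The plan is to exploit the standard presentation of $\mathcal A(1)$: it is the associative, unital $\mathbb Z/2$-algebra generated by $Sq^1$ and $Sq^2$ modulo the two relations $(Sq^1)^2 = 0$ and $Sq^1Sq^2Sq^1 = (Sq^2)^2$. Hence specifying a morphism of algebras out of $\mathcal A(1)$ amounts to specifying the images of $Sq^1$ and $Sq^2$ and checking that these two relations are respected; uniqueness is automatic. Granted this, promoting $\varphi$ to a Hopf algebra map reduces, since $\mathcal A(1)$ is generated as an algebra by $Sq^1$ and $Sq^2$, to verifying the coproduct identity $\Delta(\varphi(Sq^i)) = (\varphi \otimes \varphi)(\Delta(Sq^i))$ for $i=1,2$.

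First I would record the ingredients needed for the computations: the product formula $(k \otimes a) \cdot (l \otimes b) = \sum_i k \cup a'_i(l) \otimes a''_i \circ b$ from the preceding proposition together with the values $Sq^1(\iota_1) = \iota_1^2$, $Sq^1(\iota_2)$, $Sq^2(\iota_1) = 0$, $Sq^2(\iota_2) = \iota_2^2$, and the standard coproducts $\Delta(Sq^1) = Sq^1 \otimes 1 + 1 \otimes Sq^1$ and $\Delta(Sq^2) = Sq^2 \otimes 1 + Sq^1 \otimes Sq^1 + 1 \otimes Sq^2$. The warm-up computation $\varphi(Sq^1)^2 = 0$ is immediate: one expands $(1 \otimes Sq^1 + \iota_1 \otimes 1)^2$ via the product formula, obtaining $\iota_1^2 \otimes 1 + \iota_1 \otimes Sq^1 + \iota_1 \otimes Sq^1 + \iota_1^2 \otimes 1$, which vanishes mod $2$.

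The main obstacle is the verification of the Adem-like relation $\varphi(Sq^1)\varphi(Sq^2)\varphi(Sq^1) = \varphi(Sq^2)^2$. This is a lengthy but essentially mechanical expansion: after writing each factor as a sum of simple tensors, the product formula produces many terms, which then have to be combined using $(Sq^1)^2 = 0$, the original Adem relation in $\mathcal A(1)$, and the action of the Steenrod squares on $\iota_1, \iota_2$. The computations fall naturally into four pieces sorted by the number of $H^*(K)$-tensor factors that are non-trivial, and the terms of each type should match on both sides. I would defer this calculation to Appendix I, as the authors themselves propose.

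Finally, for the Hopf property, the coalgebra isomorphism $\underline{\mathcal A} \cong H^*(K) \otimes \mathcal A$ reduces $\Delta$ on $\underline{\mathcal A}$ to the tensor product coproduct, where $\iota_1$ and $\iota_2$ are primitive (they are fundamental classes of Eilenberg--MacLane factors). Thus $\Delta(\varphi(Sq^1)) = \varphi(Sq^1) \otimes 1 + 1 \otimes \varphi(Sq^1)$ is immediate, while for $Sq^2$ one expands $\Delta$ on each of the four summands of $\varphi(Sq^2)$: the only cross-terms come from $\iota_1 \otimes Sq^1$ and contribute $(\iota_1 \otimes 1) \otimes (1 \otimes Sq^1) + (1 \otimes Sq^1) \otimes (\iota_1 \otimes 1)$, which combined with the diagonal terms $(1 \otimes Sq^1) \otimes (1 \otimes Sq^1)$ and $(\iota_1 \otimes 1) \otimes (\iota_1 \otimes 1)$ (the latter coming from $\Delta(\iota_1^2)$) reassemble exactly as $\varphi(Sq^1) \otimes \varphi(Sq^1)$, matching $(\varphi \otimes \varphi)\Delta(Sq^2)$. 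Compatibility with counits and antipodes is then automatic since everything happens in degrees where both sides are determined by their underlying vector-space maps and grading.
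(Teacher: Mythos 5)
Your overall strategy matches the paper's: use the presentation $\mathcal A(1) = \langle Sq^1, Sq^2 \mid (Sq^1)^2 = 0,\ Sq^1Sq^2Sq^1 = (Sq^2)^2\rangle$ to reduce well-definedness and multiplicativity to two relation checks, then verify comultiplicativity on the generators. The relation checks and the deferral of the long Adem-type expansion to the appendix are exactly what the paper does, and your $\varphi(Sq^1)^2 = 0$ warm-up is correct.

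However, your comultiplicativity argument contains a genuine error. You assert that $\iota_2$ is primitive, and that the term $(\iota_1 \otimes 1) \otimes (\iota_1 \otimes 1)$ needed to reassemble $\varphi(Sq^1) \otimes \varphi(Sq^1)$ comes from $\Delta(\iota_1^2)$. Both claims are false. In characteristic $2$ one has
\begin{equation*}
\Delta(\iota_1^2) = (\Delta\iota_1)^2 = \iota_1^2 \otimes 1 + 1 \otimes \iota_1^2,
\end{equation*}
with no cross term, so $\iota_1^2 \otimes 1$ contributes nothing off-diagonal. The H-space structure on $K$ is not the product structure: by Lemma \ref{twimul} it is $(a,b)\cdot(c,d) = (a+c,\, b+d+ac)$, and dually
\begin{equation*}
\Delta(\iota_2) = \iota_2 \otimes 1 + \iota_1 \otimes \iota_1 + 1 \otimes \iota_2,
\end{equation*}
so $\iota_2$ is \emph{not} primitive. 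It is precisely the middle term $\iota_1 \otimes \iota_1$ of $\Delta(\iota_2)$ that supplies the missing $(\iota_1 \otimes 1)\otimes(\iota_1\otimes 1)$. Your two mistakes happen to cancel and so you land on the correct final identity $\Delta(\varphi(Sq^2)) = (\varphi\otimes\varphi)\Delta(Sq^2)$, but the reasoning as written is wrong: if $\iota_2$ really were primitive (as you assume), the coproduct check would \emph{fail} and $\varphi$ would not be comultiplicative. The non-primitivity of $\iota_2$ is not a detail you can drop; it is the reason the map is compatible with coproducts at all, and it is exactly where the twisted multiplication on $K = B\PGl$ enters the computation.
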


\begin{proof}
As mentioned in \ref{twiemb} the following two identities 
\begin{align*}
    \varphi(Sq^1)^2 &= 0 \\
    \varphi(Sq^2)^2 &=  \varphi(Sq^1)\varphi(Sq^2)\varphi(Sq^1)
\end{align*} 
will show both multiplicativity of $\varphi$ and that it is even well defined.
\begin{align*} 
\varphi(Sq^1)^2 &= (1 \otimes Sq^1 + \iota_1 \otimes 1)^2 \\
                &= (1 \otimes Sq^1)^2 + (1 \otimes Sq^1)(\iota_1 \otimes 1) + (\iota_1 \otimes 1)(1 \otimes Sq^1) + (\iota_1 \otimes 1)^2 \\
                &= \underbrace{1 \otimes (Sq^1)^2}_{=0} + \big[\underbrace{Sq^1(\iota_1) \otimes 1}_{=\iota_1^2 \otimes 1} + \iota_1 \otimes Sq^1\big] + \iota_1 \otimes Sq^1 + \iota_1^2 \otimes 1 \\
		&= 0
\end{align*}

\begin{align*}
&\varphi(Sq^1)\varphi(Sq^2)\varphi(Sq^1) \\
        &= \ (1 \otimes Sq^1 + \iota_1 \otimes 1)(1 \otimes Sq^2 + \iota_1 \otimes Sq^1 + \iota_1^2 \otimes 1 + \iota_2 \otimes 1)(1 \otimes Sq^1 + \iota_1 \otimes 1) \\
        &= \Big((1 \otimes Sq^1)(1 \otimes Sq^2) + (1 \otimes Sq^1)(\iota_1 \otimes Sq^1) + (1 \otimes Sq^1)(\iota_1^2 \otimes 1) \\
         &\ \quad + (1 \otimes Sq^1)(\iota_2 \otimes 1) + (\iota_1 \otimes 1)(1 \otimes Sq^2) + (\iota_1 \otimes 1)(\iota_1 \otimes Sq^1) \\
         &\ \quad + (\iota_1 \otimes 1)(\iota_1^2 \otimes 1) + (\iota_1 \otimes 1)(\iota_2 \otimes 1)\Big)(1 \otimes Sq^1 + \iota_1 \otimes 1) \\
        &= \Big(1 \otimes Sq^1Sq^2 + \big[\underbrace{Sq^1(\iota_1) \otimes Sq^1}_{= \cancel{\iota_1^2 \otimes Sq^1}} + \underbrace{\iota_1 \otimes (Sq^1)^2}_{=0}\big] + \big[\underbrace{Sq^1(\iota_1^2) \otimes 1}_{=0} + \  \cancel{\iota_1^2 \otimes Sq^1}\big] \\
         &\ \quad + \left[Sq^1(\iota_2) \otimes 1 + \iota_2 \otimes Sq^1\right] + \iota_1 \otimes Sq^2 + \iota_1^2 \otimes Sq^1 \\
         &\ \quad + \iota_1^3 \otimes 1 + \iota_1\iota_2 \otimes 1\Big)(1 \otimes Sq^1 + \iota_1 \otimes 1) \\
        &= \ (1 \otimes Sq^1Sq^2)(1 \otimes Sq^1) + (Sq^1(\iota_2) \otimes 1)(1 \otimes Sq^1) + (\iota_2 \otimes Sq^1)(1 \otimes Sq^1) \\
         &\ \quad + (\iota_1 \otimes Sq^2)(1 \otimes Sq^1) + (\iota_1^2 \otimes Sq^1)(1 \otimes Sq^1) + (\iota_1^3 \otimes 1)(1 \otimes Sq^1) \\
         &\ \quad + (\iota_1\iota_2 \otimes 1)(1 \otimes Sq^1) + (1 \otimes Sq^1Sq^2)(\iota_1 \otimes 1) + (Sq^1(\iota_2) \otimes 1)(\iota_1 \otimes 1) \\
         &\ \quad + (\iota_2 \otimes Sq^1)(\iota_1 \otimes 1) + (\iota_1 \otimes Sq^2)(\iota_1 \otimes 1) + (\iota_1^2 \otimes Sq^1)(\iota_1 \otimes 1) \\
         &\ \quad + (\iota_1^3 \otimes 1)(\iota_1 \otimes 1) + (\iota_1\iota_2 \otimes 1)(\iota_1 \otimes 1) \\			
        &= \ \underbrace{1 \otimes Sq^1Sq^2Sq^1}_{= 1 \otimes Sq^3Sq^1} + Sq^1(\iota_2) \otimes Sq^1 + \underbrace{\iota_2 \otimes (Sq^1)^2}_{=0} + \ \iota_1 \otimes Sq^2Sq^1 + \underbrace{\iota_1^2 \otimes (Sq^1)^2}_{=0} \\
         &\ \quad + \iota_1^3 \otimes Sq^1 + \cancel{\iota_1\iota_2 \otimes Sq^1} + \big[\underbrace{Sq^1Sq^2(\iota_1) \otimes 1}_{=0} + \underbrace{Sq^1(\iota_1) \otimes Sq^2}_{\cancel{\iota_1^2 \otimes Sq^2}} + \underbrace{Sq^2(\iota_1) \otimes Sq^1}_{=0} \\
         &\ \quad + \underbrace{\iota_1 \otimes Sq^1Sq^2}_{\iota_1 \otimes Sq^3}\big] + Sq^1(\iota_2)\iota_1 \otimes 1 + \big[\underbrace{\iota_2Sq^1(\iota_1) \otimes 1}_{\cancel{\iota_1^2\iota_2 \otimes 1}} + \cancel{\iota_2\iota_1 \otimes Sq^1}\big] \\
         &\ \quad + \big[\underbrace{\iota_1Sq^2(\iota_1) \otimes 1}_{=0} + \underbrace{\iota_1Sq^1(\iota_1) \otimes Sq^1}_{\cancel{\iota_1^3 \otimes Sq^1}} + \cancel{\iota_1^2 \otimes Sq^2}\big] + \big[\underbrace{\iota_1^2Sq^1(\iota_1) \otimes 1}_{\cancel{\iota_1^4 \otimes 1}} + \cancel{\iota_1^3 \otimes Sq^1}\big] \\
         &\ \quad + \cancel{\iota_1^4 \otimes 1} + \cancel{\iota_1^2\iota_2 \otimes 1}\\
        &= \ 1 \otimes Sq^3Sq^1 + Sq^1(\iota_2) \otimes Sq^1 + \iota_1 \otimes Sq^2Sq^1 + \iota_1^3 \otimes Sq^1 + \iota_1 \otimes Sq^3 \\
         &\ \quad + \iota_1Sq^1(\iota_2) \otimes 1  
\end{align*}

\begin{align*}
&\varphi(Sq^2)\varphi(Sq^2) \\
        &= \ (1 \otimes Sq^2 + \iota_1 \otimes Sq^1 + \iota_1^2 \otimes 1 + \iota_2 \otimes 1)(1 \otimes Sq^2 + \iota_1 \otimes Sq^1 + \iota_1^2 \otimes 1 + \iota_2 \otimes 1) \\
        &= \ (1 \otimes Sq^2)^2 + (\iota_1 \otimes Sq^1)(1 \otimes Sq^2) + (\iota_1^2 \otimes 1)(1 \otimes Sq^2) + (\iota_2 \otimes 1)(1 \otimes Sq^2) \\
         &\ \quad + (1 \otimes Sq^2)(\iota_1 \otimes Sq^1) + (\iota_1 \otimes Sq^1)^2 + (\iota_1^2 \otimes 1)(\iota_1 \otimes Sq^1) + (\iota_2 \otimes 1)(\iota_1 \otimes Sq^1) \\
         &\ \quad + (1 \otimes Sq^2)(\iota_1^2 \otimes 1) + (\iota_1 \otimes Sq^1)(\iota_1^2 \otimes 1) + (\iota_1^2 \otimes 1)^2 + (\iota_2 \otimes 1)(\iota_1^2 \otimes 1) \\
         &\ \quad + (1 \otimes Sq^2)(\iota_2 \otimes 1) + (\iota_1 \otimes Sq^1)(\iota_2 \otimes 1) + (\iota_1^2 \otimes 1)(\iota_2 \otimes 1) + (\iota_2 \otimes 1)^2 \\
        &= \ \underbrace{1 \otimes (Sq^2)^2}_{= 1 \otimes Sq^3Sq^1} + \underbrace{\iota_1 \otimes Sq^1Sq^2}_{= \iota_1 \otimes Sq^3} + \cancel{\iota_1^2 \otimes Sq^2} + \cancel{\iota_2 \otimes Sq^2} + \big[\underbrace{Sq^2(\iota_1) \otimes Sq^1}_{=0} \\
         &\ \quad + \underbrace{Sq^1(\iota_1) \otimes (Sq^1)^2}_{=0} + \iota_1 \otimes Sq^2Sq^1\big] + \big[\underbrace{\iota_1Sq^1(\iota_1) \otimes Sq^1}_{= \iota_1^3 \otimes Sq^1} + \underbrace{\iota_1^2 \otimes (Sq^1)^2}_{=0} \big] \\
         &\ \quad + \cancel{\iota_1^3 \otimes Sq^1} + \cancel{\iota_2\iota_1 \otimes Sq^1} + \big[\underbrace{Sq^2(\iota_1^2) \otimes 1}_{= \cancel{\iota_1^4 \otimes 1}} + \underbrace{Sq^1(\iota_1^2) \otimes Sq^1}_{=0} + \cancel{\iota_1^2 \otimes Sq^2}\big] \\
         &\ \quad + \big[\underbrace{\iota_1Sq^1(\iota_1^2) \otimes 1}_{=0} + \cancel{\iota_1^3 \otimes Sq^1}\big] + \cancel{\iota_1^4 \otimes 1} + \cancel{\iota_2\iota_1^2 \otimes 1} + \big[\underbrace{Sq^2(\iota_2) \otimes 1}_{= \cancel{\iota_2^2 \otimes 1}} \\
         &\ \quad + Sq^1(\iota_2) \otimes Sq^1 + \cancel{\iota_2 \otimes Sq^2}\big] + \big[\iota_1Sq^1(\iota_2) \otimes 1 + \cancel{\iota_1\iota_2 \otimes Sq^1}\big] + \cancel{\iota_1^2\iota_2 \otimes 1} \\
         &\ \quad + \cancel{\iota_2^2 \otimes 1} \\   
        &= \ 1 \otimes Sq^3Sq^1 + \iota_1 \otimes Sq^3 + \iota_1 \otimes Sq^2Sq^1 + \iota_1^3 \otimes Sq^1 + Sq^1(\iota_2) \otimes Sq^1 \\ 
         &\ \quad + \iota_1Sq^1(\iota_2) \otimes 1
\end{align*}

These two outcomes obviously agree.

For the comultiplicativity recall that the inclusion maps induce a coalgebra isomorphism $H^*(K) \otimes \mathcal A \rightarrow \underline{\mathcal A}$. The coalgebra structure of $H^*(K)$ is of course determined by \ref{twimul}:
\begin{align*}
\Delta(\iota_1) &= \iota_1 \otimes 1 + 1 \otimes \iota_1 \\
\Delta(\iota_2) &= \iota_2 \otimes 1 + \iota_1 \otimes \iota_1 + 1\otimes \iota_2
\end{align*}
With these data it is trivial to verify $\varphi(\Delta(Sq^{1,2})) = \Delta(\varphi(Sq^{1,2}))$. By multiplicativity of $\varphi$ and the coproducts we are done.
\end{proof}

\begin{ulem}[\ref{twimod}]
Extending $\varphi$ by the identity on $H^*(K)$ produces a Hopf algebra automorphism of $\underline{\mathcal A(1)}$. 
\end{ulem}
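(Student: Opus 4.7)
The plan is to exploit the semidirect product structure of $\underline{\mathcal A(1)}$ already established in the paper: as an algebra it is generated by the images of $\mathcal A(1)$ and $H^*(K)$ subject to the relations internal to each factor together with the crossed relations $a \cdot k = \sum_i a'_i(k) \cdot a''_i$ for $a \in \mathcal A(1)$ and $k \in H^*(K)$, where $\Delta(a) = \sum_i a'_i \otimes a''_i$. Consequently, to extend $\varphi$ by the identity on $H^*(K)$ to an algebra endomorphism $\tilde\varphi$ of $\underline{\mathcal A(1)}$, I only need to verify that $\tilde\varphi$ respects all three families of relations.

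The relations inside $\mathcal A(1)$ are handled by Lemma \ref{twiemb}, and those inside $H^*(K)$ are tautological. So it remains to verify the crossed relations, and by multiplicativity this reduces to the four identities
\begin{align*}
\varphi(Sq^1) \cdot k &= Sq^1(k) + k \cdot \varphi(Sq^1), \\
\varphi(Sq^2) \cdot k &= Sq^2(k) + Sq^1(k) \cdot \varphi(Sq^1) + k \cdot \varphi(Sq^2)
\end{align*}
for $k \in \{\iota_1,\iota_2\}$ (using $\Delta(Sq^1) = Sq^1 \otimes 1 + 1 \otimes Sq^1$ and $\Delta(Sq^2) = Sq^2 \otimes 1 + Sq^1 \otimes Sq^1 + 1 \otimes Sq^2$). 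The main obstacle is just the bookkeeping: each of these is a routine but lengthy calculation in $\underline{\mathcal A}$ entirely analogous to the ones carried out for Lemma \ref{twiemb}, and the content that would be deferred to the appendix.

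Granted that $\tilde\varphi$ is well-defined, the isomorphism claim is short. The map $\tilde\varphi$ is tautologically left $H^*(K)$-linear, since $\tilde\varphi(k\cdot x) = \tilde\varphi(k)\cdot\tilde\varphi(x) = k\cdot\tilde\varphi(x)$. The proposition identifying $\underline{\mathcal A} \cong H^*(K) \otimes \mathcal A$ restricts to display $\underline{\mathcal A(1)}$ as a free left $H^*(K)$-module on $\mathcal A(1)$, so it suffices to show that the induced endomorphism on $H^*(K)$-indecomposables is an isomorphism. The explicit formulas give $\varphi(Sq^i) \equiv Sq^i \pmod{H^+(K)\cdot\underline{\mathcal A(1)}}$ for $i=1,2$, so that endomorphism is the identity on generators of $\mathcal A(1)$, hence on $\mathcal A(1)$ as a whole.

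Finally, for the Hopf property: since both inclusions $H^*(K) \hookrightarrow \underline{\mathcal A(1)}$ and $\mathcal A(1) \hookrightarrow \underline{\mathcal A(1)}$ are coalgebra maps and $\Delta$ is multiplicative, the equation $\Delta \circ \tilde\varphi = (\tilde\varphi \otimes \tilde\varphi) \circ \Delta$ need only be checked on the generators $\iota_1, \iota_2, Sq^1, Sq^2$. It is trivial on $\iota_1, \iota_2$ since $\tilde\varphi$ is the identity there, and it follows from Lemma \ref{twiemb} on $Sq^1, Sq^2$.
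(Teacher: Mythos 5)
Your proof is correct and follows essentially the same strategy as the paper's: both identify the issue as verifying the crossed (smash-product) relations after extending $\varphi$ by the identity on $H^*(K)$, both observe that the check propagates under multiplying the $\mathcal A(1)$-variable and so reduces to $Sq^1, Sq^2$, and both argue the isomorphism part by noting $H^*(K)$-linearity together with the fact that the induced map on $H^*(K)$-indecomposables is the identity. The only (harmless) difference is that you further restrict the $H^*(K)$-variable to the generators $\iota_1,\iota_2$ -- which is legitimate since the ideal of relations of the semidirect product is generated by the generator-to-generator crossed relations -- whereas the paper verifies $\varphi(\alpha)(l\otimes 1) = \sum(\alpha'(l)\otimes 1)\varphi(\alpha'')$ directly for arbitrary $l\in H^*(K)$.
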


\begin{proof}
Putting $\varphi(k \otimes \alpha) = (k \otimes 1)\varphi(\alpha)$ just leaves the verificaton of the multiplicativity. To this end calculate
\begin{align*}
 \varphi(k \otimes \alpha)\varphi(l \otimes \beta) &= (k \otimes 1)\varphi(\alpha)(l \otimes 1)\varphi(\beta) \\
 \varphi((k \otimes \alpha)(l \otimes \beta))      &= \sum \varphi(k \cdot \alpha'(l) \otimes \alpha'' \circ \beta) \\
                                                   &= \sum (k\alpha'(l) \otimes 1)\varphi(\alpha'' \circ \beta) \\
                                                   &= \sum (k \otimes 1)(\alpha'(l) \otimes 1)\varphi(\alpha'')\varphi(\beta)
\end{align*}
Thus it suffices to show that $\varphi(\alpha)(l \otimes 1) = \sum (\alpha'(l) \otimes 1)\varphi(\alpha'')$ for all $\alpha \in \mathcal A, l \in H^*(K)$. Now observe observe that this statement propagates under multiplying $\alpha$'s and then calculate once more:
\begin{align*}
 \varphi(Sq^1)(l \otimes 1) =&\ (\iota_1 \otimes 1 + 1 \otimes Sq^1)(l \otimes 1) \\
                            =&\ (\iota_1l \otimes 1) + (Sq^1(l) \otimes 1) + (l \otimes Sq^1) \\
                            =&\ (Sq^1(l) \otimes 1) + (l \otimes 1)(\iota_1 \otimes 1 + 1 \otimes Sq^1)\\
 \varphi(Sq^2)(l \otimes 1) =&\ (\iota_2 \otimes 1 + \iota_1^2 \otimes 1 + \iota_1 \otimes Sq^1 + 1 \otimes Sq^2)(l \otimes 1) \\
                            =&\ \iota_2l \otimes 1 + \iota_1^2 \otimes 1 + \iota_1Sq^1(l) \otimes 1 + \iota_1l\otimes Sq^1 + Sq^2(l) \otimes 1 \\
                            +&  Sq^1(l) \otimes Sq^1 + l \otimes Sq^2 \\
                            =&\ (Sq^2(l) \otimes 1) + (Sq^1(l) \otimes 1)(\iota_1 \otimes Sq^1) \\
                            +&  (l \otimes 1)(\iota_2 \otimes 1 + \iota_1^2 \otimes 1 + \iota_1 \otimes Sq^1 + 1 \otimes Sq^2)
\end{align*}
Of course this entire calculation is nothing but the verification of compatibility of $\varphi$ and $id_{H^*(K)}$ in the universal property of a semidirect product of Hopf algebras.
\end{proof}

\section{A proof of the cobordism invariance theorem}

Theorem \ref{bordi} is a direct consequence of the following result and the surgery theorem of Gromov and Lawson:
\begin{uprop} \label{codest}
Let $n\geq 5$ and $M$ be a smooth, closed $n$-manifold with a $\xi$-structure such that the underlying map $M \rightarrow B$ is a $k$-equivalence 
where $k < \frac n 2$. Let furthermore $N$ be another $n$-dimensional, closed $\xi$-manifold that is $\xi$-bordant to $M$. 
Then $M$ arises from $N$ by a sequence of surgeries of codimension $>k$.
\end{uprop}

As mentioned in the main text, this statement can be found in various places in the literature, yet there seems to be no full proof of it. The $Spin$ case is explicitely proved in \cite{Ro0} and we closely follow its strategy. The proof in \cite{Kr} contains a gap as we explain in the remark below.

\begin{proof}
One can first perform surgery on the interior of a given cobordism $W$ to make the map $W \rightarrow B$ into a $k$-equivalence $W' \rightarrow B$ as explained for example in \cite[Theorem 3.61]{Lu} or \cite[Chapter 1]{Wa}; the assumption that the target be a finite complex with Poincar\'e duality may be relaxed to it being of type $F_k$, i.e. admitting a $k$-equivalence from a finite complex: This still implies the necessary finiteness of the relevant relative homotopy groups by the Hurewicz theorem and Poincar\'e duality is not used at all below the middle dimension. \\
To proceed pick a finite generating system of $\pi_k(W',M)$ (this is again possible by Hurewicz' theorem). The map $\pi_k(W') \rightarrow \pi_k(W',M)$ is surjective as by construction of $W'$ the inclusion $M \rightarrow W'$ induces an isomorphism on the first $k-1$ homotopy groups.
$$\xymatrix@-1pc{\cdots \ar[rr]&& \pi_k(M) \ar[rr] \ar@{->>}[rrd] && \pi_k(W') \ar@{->>}[d] \ar@{->>}[rr] &&\pi_k(W',M) \ar[rr] && \dots \\
                         &&                && \pi_k(B)               && &&}$$
 We now claim that lifts of the generators of $\pi_k(W',M)$ can be chosen in the kernel of $\pi_k(W) \rightarrow \pi_k(B)$, whence they have trivial stable normal bundle and thus trivial normal bundle since $k < n/2$. Indeed for an arbitrary lift the failure to lie in the kernel can be corrected by an element in $\pi_k(M)$ by assumption on $M$. \\
The cobordism $W''$ arising from this surgery now has $M \rightarrow W''$ a $k$-equivalence and Smale's theorem yields the claim.
\end{proof}

\begin{urem}
Kreck's proof of our corollary \ref{bordi}, which corresponds to \cite[Theorem 1]{Kr}, as mentioned, is not quite correct. In the proof it is claimed that the map $W \rightarrow B$ above can be surgered into a $k+1 = 3$ equivalence by an application of \cite[Proposition 4]{Kr}. This is not true in general and also not implied by the quoted proposition: For example any spin manifold $M$ is equipped with a structure map to $BSpin \times B\pi_1(M)$ and if it can be surgered into a $3$-equivalence it follows immediately that $\pi_1(M)$ satisfies Serre's finiteness property $F_3$, namely that there exists a finite complex admitting a $3$-equivalence to its classifying space. However, any finitely presented group ($= F_2$) can occur as the fundamental group of a compact spin manifold, since for a spin manifold every map $M \rightarrow B\pi$ gives a map $M \rightarrow BSpin \times B\pi$, which may be surgered into a $2$-equivalence as explained in the proof of the above proposition or indeed is implied by \cite[Proposition 4]{Kr}. 

An application of \cite[Proposition 4]{Kr} could indeed replace the first paragraph of the proof above. We, however, deliberately refrained from citing \cite[Proposition 4]{Kr} in the proof of the above proposition, since its proof unfortunately contains another erroneous assertion about finite generation of certain homotopy groups.
\end{urem}

\section{A comparison to homotopical models of twisted $K$-theory}

Let us finally explain the comparison of our construction of twists for $MSpin$ and $KO$ with the one arising in the infinity-categorical setting of \cite{AnBlGeHoRe}. For the simply connected case, i.e.\ twists of $KO$ by maps into $K(\mathbb Z/2,2)$ such a comparison was first attempted by Antieau, Gepner and Gomez in \cite{AGG}. They showed that there are only two homotopy classes of maps \[K(\mathbb Z/2,2) \rightarrow BGl_1(KO)\] and similarly a $\mathbb Z$-worth of maps $K(\mathbb Z,3) \rightarrow BGl_1(KU)$. Their proof that a geometric construction indeed produces such a map, however, contains a mistake as we will explain at the end of this section. Our comparison proceeds quite differently from theirs and is stronger in several respects: It provides a comparison of the two ways of twisting as $E_\infty$ maps to $BGl_1(KO)$, works for the entire space $BO[2]$ and not just its $2$-connective cover $K(\mathbb Z/2,2)$ and in fact works at the level of $Spin$-cobordism and not just $K$-theory. It also works equally well in the complex case for the two maps $B(O\sslash Spin^c) \rightarrow BGl_1(MSpin^c) \rightarrow BGl_1(KU)$, where $\sslash$ denotes the homotopy orbit construction; this is analogous to the real situation, since $BO[2] \simeq B(O\sslash Spin)$. To keep the exposition brief we shall refrain from spelling this out and we will assume familiarity with both the methods of \cite{AnBlGeHoRe} and the technology of $\mathcal I$-spaces from \cite{SS, SS2}. 

Let us briefly sketch the homotopical method for obtaining twisted homology theories. One starts with an $E_\infty$ ring spectrum $R$, and forms the universal $R$-line bundle over $BGl_1(R)$ or even more generally over $Pic(R)$, of which $BGl_1(R)$ is the unit component; here $Gl_1(R)$ denotes the $E_\infty$ space of (derived) $R$-linear self equivalences of $R$. This universal bundle can now be pulled back along maps $K \rightarrow BGl_1(R)$ to more manageable spaces. In the case of the Thom spectrum $M\theta$ of an $E_\infty$ map $\theta: B \rightarrow BO$ (or more generally to $B\mathcal G$, the classifying space of spherical fibrations) the following construction suggests itself (we learned of it from M. Ando, compare \cite{AnBlGe}): Letting $F_\theta$ denote the homotopy fibre of $\theta$ over the unit in $BO$, one obtains $E_\infty$ maps 
$$\Sigma^\infty_+ F_\theta \rightarrow M\theta \rightarrow MO$$
by applying Thom spectra. The first map in this sequence adjoins to a map $F_\theta \rightarrow Gl_1(M\theta)$ which may be delooped to yield an $E_\infty$ map
$$BF_\theta \rightarrow BGl_1(M\theta)$$
In the case of the Thom spectrum $MSpin$, i.e.\ $B = BSpin$ and $\theta$ the canonical projection, this yields $F_{Spin} = O\sslash Spin \simeq O[1]$. From $BF \simeq BO[2]$ we then on the one hand obtain a map 
$$BO[2] \rightarrow BGl_1(MSpin).$$ 
On the other hand the action of $P\mathcal{O}(\ell^2)$ on the `free rank $1$ $MSpin$-module' $MSpin'$ we produced in the main text yields a map $P\mathcal{O}(\ell^2) \rightarrow \mathrm{hAut}_{MSpin}(MSpin')$ into the $MSpin$-linear homotopy self-equivalences of $MSpin'$, and thus a derived $E_1$ map $P\mathcal{O}(\ell^2) \rightarrow Gl_1(MSpin)$. This deloops to a map 
$$BP\mathcal{O}(\ell^2) \rightarrow BGl_1(MSpin)$$
and similarly we obtain
$$BP\mathcal O(\ell^2) \rightarrow BGl_1(KO)$$
from the $KO$-module $KO'$.
We will endow these maps with $E_\infty$ refinements below. The second map fits into the framework of \cite{AGG} upon taking the $2$-connective cover in the source and thus the composite
\[K(\mathbb Z/2,2) \rightarrow BP\mathcal{O}(\ell^2) \rightarrow BGl_1(KO)\]
is easily checked to agree with the composite
\[K(\mathbb Z/2,2) \rightarrow BO[2] \rightarrow BGl_1(MSpin) \rightarrow BGl_1(KO)\] 
(when not considering $E_\infty$ structures). We shall proceed in a more direct fashion: We established in the proof of lemma \ref{twimul} an equivalence $j \colon BO[2] \rightarrow BP\mathcal{O}(\ell^2)$ and we claim:

\begin{uthm}
The diagram
$$\xymatrix{BO[2] \ar[r] \ar[d]_j & BGl_1(MSpin) \ar[d]^{\alpha_*} \\
            BP\mathcal{O}(\ell^2) \ar[ur] \ar[r] & BGl_1(KO)}$$
containing the two constructions outlined above is homotopy commutative as a diagram of $E_\infty$ spaces and maps. The same statement holds for
$$\xymatrix{B(O\sslash Spin^c) \ar[r] \ar[d]_j & BGl_1(MSpin^c) \ar[d]^{\alpha^c_*} \\
            BP\mathcal{U}(\ell^2) \ar[ur] \ar[r] & BGl_1(KU).}$$
\end{uthm}

The composition of the top horizontal and right vertical map is the usual way of constructing twists of $KO$-theory homotopically as explained above, whereas the lower horizontal map is the usual way of doing so geometrically, e.g. in \cite{AtSe}. 

Let us note right away that (at the level of spaces) the lower triangle commutes essentially by definition of the $P\mathcal O(\ell^2)$-action on $KO'$ as it makes the map $\alpha'\colon MSpin' \rightarrow KO'$ equivariant. We will proceed by producing a better model of the diagonal map using the technology of $\mathcal I$-spaces developed by Sagave and Schlichtkrull in \cite{SS}. This will give the desired $E_\infty$ structure to the diagonal and upgrade the commutativity of the lower triangle to include $E_\infty$ structures. The homotopical way of obtaining twists can also be modelled in the category of $\mathcal I$-spaces in an obvious fashion and we shall see that in this model the upper triangle also commutes strictly. 

We will start by giving these two $\mathcal I$-space models and checking the commutativity. More (and completely independent) work will then be required to show that this construction agrees up to homotopy with the diagonal map arising from our actions of $P\mathcal O(\ell^2)$ on $MSpin'$.

\begin{urem}
As mentioned in Remark \ref{remark}, based on the observations of this appendix a more refined category of parametrised spectra is constructed in joint work of the first author with Sagave and Schlichtkrull \cite{HSS19}. In this set-up the construction of $MSpin_K$ and $KO_K$ can be carried out without first passing to the primed free rank 1 modules and in a second paper \cite{HS19} the results of this appendix are then strengthened to hold at the level of twisted cohomology, rather than just at the level of twists.
\end{urem}

To proceed recall then the construction of $Gl_1(R)_\bullet$ for $R$ a symmetric ring spectrum due to Schlichtkrull: $Gl_1(R)_\bullet$ is the sub-$\mathcal I$-space of $$\Omega^\infty(R)_\bullet: k \longmapsto \Omega^k R_k$$ consisting of those components mapping to units in $\pi_0(R)$. The multiplication on $R$ gives it an $\mathcal I$-space monoid structure, i.e.\ it is a monoid under convolution (denoted by $\boxtimes$) in $\mathcal I$-spaces. If $R$ is commutative, the same holds for $Gl_1(R)$. We also immediately note that the spaces $P\mathcal{O}_n$ naturally form an $\mathcal I$-group. In fact, using the structure maps introduced in sections \ref{prelim} and \ref{twispi} they form an Eckman-Hilton-$\mathcal I$-group as defined by Dardalat and Pennig in \cite{DP}, i.e.~a commutative monoid under convolution in $\mathcal I$-groups. Dardalat and Pennig go on to show that an action of an Eckmann-Hilton-$\mathcal I$-group $G_\bullet$ on a symmetric ring spectrum $R$ gives rise to a map of commutative $\mathcal I$-space-monoids \[DP \colon G_\bullet \rightarrow Gl_1(R)_\bullet;\] such an action consists of maps 
$$G_n \times R_n \longrightarrow R_n$$
satisfying certain axioms which we recall (and slightly generalise) below. Their map $DP$ is then given by 
$$g \longmapsto \big(s \longmapsto g \cdot u_n(s)\big)$$ where $u_n: S^n \rightarrow R_n$ is part of the unit of $R$. In our case the left multiplication of $P\mathcal O_n$ on $MSpin_n$ yields a map $$P\mathcal{O}_\bullet \longrightarrow Gl_1(MSpin)_\bullet.$$
Recall also that an $\mathcal I$-space has a realisation (its homotopy colimit), and the arising functor from $\mathcal I$-spaces to spaces is lax monoidal, though not strictly but rather only coherently symmetric, with respect to convolution in $\mathcal I$-spaces and the cartesian product in spaces. Commutative $\mathcal I$-space monoids therefore realise to $E_\infty$ spaces (this is explicitely explained in \cite[Proposition 6.5]{Sch}) and we use the realisation
\[|P\mathcal{O}_\bullet| \longrightarrow |Gl_1(MSpin)_\bullet|\]
as the model of geometric twists.

As the first step, we now show that the realisation of the map $P\mathcal{O}_\bullet \longrightarrow Gl_1(MSpin)_\bullet$ just described models the $E_\infty$ map 
$$O[1] \simeq F_{Spin} \longrightarrow Gl_1(MSpin).$$
In the main text we observed that the stages
\[F_{Spin}(n) \rightarrow BSpin(n) \rightarrow BO(n)\]
of the fibre sequence $F_{Spin} \rightarrow BSpin \rightarrow BO$ can be represented by the sequence
$$P\mathcal{O}_n \rightarrow P\mathcal{O}_n/O(n) \rightarrow EP\mathcal{O}_n/O(n)$$
and these form the stages of a sequence of commutative $\mathcal I$-space monoids as we explain below. The sequence is covered by the vector bundle maps
$$P\mathcal{O}_n \times \mathbb R^n \rightarrow P\mathcal{O}_n \times_{O(n)} \mathbb R^n \rightarrow EP\mathcal{O}_n \times_{O(n)} \mathbb R^n$$
and the Thomification of the former is the projection $$(P\mathcal{O}_n)_+ \wedge S^n \rightarrow MSpin_n$$ (recall $MSpin_n = (P\mathcal O_n)_+ \wedge_{O(n)} S^n$). This adjoins to the map of commutative $\mathcal I$-space-monoids
$$P\mathcal{O}_n \longrightarrow \Omega^n MSpin_n, \quad \quad p \longmapsto \big(s \longmapsto [p,s]\big),$$
which (upon realisation) models the homotopical twistings sketched before the theorem essentially by construction. It also manifestly equals the map of Dardalat and Pennig by definition of the action of $P\mathcal O_n$ on $MSpin$ on the left factor.

In remains to provide the sequence 
\[P\mathcal{O}_n \rightarrow P\mathcal{O}_n/O(n) \rightarrow EP\mathcal{O}_n/O(n)\]
with the structure of commutative $\mathcal I$-space-monoids such that the realisation of the latter map obtains the standard Whitney sum $E_\infty$ structure $BSpin \rightarrow BO$. As we explained above the first term is in fact an $\mathcal I$-group, which is nothing but an $\mathcal I$-space together with a monoid structure with respect to the levelwise cartesian product. Now, the identity functor on $\mathcal I$-spaces with source given the cartesian and target the convolution product admits a tautological lax symmetric monoidal structure, i.e. a symmetric natural transformation
$$- \boxtimes - \longrightarrow  - \times -,$$
so that every $\mathcal I$-group gives rise to an $\mathcal I$-space-monoid, thus providing the desired structure on $P\mathcal O_\bullet$. Explicitely, the $\mathcal I$-space monoid structure on an $\mathcal I$-group $G_\bullet$ is given by
\[G_n \times G_m \rightarrow G_{n+m} \times G_{n+m} \rightarrow G_{n+m}\]
with the first map induced by the inclusions 
\[\{1,\dots,n\} \rightarrow \{1,\dots,n+m\} \leftarrow \{1,\dots,m\}\] 
as the first and last entries, respectively, and the second the group multiplication.

Now it is readily checked that the $\mathcal I$-space monoid structure on $P\mathcal O_\bullet$ is commutative, making it an Eckmann-Hilton $\mathcal I$-group in sense of Dardalat and Pennig. In fact, for an Eckmann-Hilton-$\mathcal I$-group it is readily checked that the $\mathcal I$-space-monoid structure induced from its $\mathcal I$-group structure always recovers the one given by the a priori $\mathcal I$-space-monoid structure. This implies that Eckmann-Hilton-$\mathcal I$-groups are precisely those $\mathcal I$-groups whose associated $\mathcal I$-space-monoid is commutative and in particular no extra structure needs to be specified (which makes some of the axioms in \cite{DP} redundant).

Similarly, the orthogonal groups $O(n)$ fit into an Eckmann-Hilton-$\mathcal I$-group $O(\bullet)$. The maps $j_n \colon O(n) \rightarrow P\mathcal O_n$ provide a map
\[j \colon O(\bullet) \rightarrow P\mathcal O_\bullet\]
of $\mathcal I$-groups and thus Eckmann-Hilton-$\mathcal I$-groups. The quotient $P\mathcal{O}_\bullet/O(\bullet)$ then retains a commutative $\mathcal I$-space monoid structure and choosing $EP\mathcal O_n$ as the bar-construction $B(*,P\mathcal O_n,P\mathcal O_n)$ provides a commutative $\mathcal I$-space-monoid structure to $EP\mathcal O_\bullet$, which also passes to the quotient $EP\mathcal O_\bullet/O_\bullet$. That the $E_\infty$ structures we obtain on the realisations are the usual ones under the equivalences $|P\mathcal O_\bullet/O(\bullet)| \simeq BSpin$ and $|EP\mathcal O_\bullet/O(\bullet))| \simeq BO$ follows from the commutativity of the diagram
\[\xymatrix@-1pc{Spin(\bullet) \ar[r]^j \ar[d] & \mathcal O_\bullet^{ev} \ar[r] \ar[d] & P\mathcal O_\bullet/O(\bullet) \ar[d]\\
            O(\bullet)     \ar[r]^j       & EP\mathcal O_\bullet  \ar[r] & EP\mathcal O_\bullet/O(\bullet)}\]
of commutative $\mathcal I$-space monoids, since its rows model the two universal bundles upon realisation. This finishes the construction of the commutative square involving the geometric and homotopical twists modelled in $\mathcal I$-spaces.\\ 

We proceed to compare the map $P\mathcal{O}_\bullet \longrightarrow Gl_1(MSpin)_\bullet$ to the (derived) map $P\mathcal{O}(\ell^2) \longrightarrow Gl_1(MSpin)$ constructed in the body of the main text (\ref{twispi}), in particular providing an $E_\infty$-structure on the latter. 

Just as the groups $P\mathcal{O}_n$ form an $\mathcal I$-group, so do the $P\mathcal{O}'_n$. The associated $\mathcal I$-space-monoid, however, is not commutative because of the extra $\ell^2$-factor. To obtain a proof of the theorem from the considerations so far we shall use (a version of) Shipley's fibrant replacement $\omega MSpin'$ (an $\Omega$-spectrum) of the spectrum $MSpin'$ and produce a commutative diagram
$$\xymatrix@-1pc{ P\mathcal{O}(\ell^2) \ar[d] \ar[r]^-\simeq & |P\mathcal{O}'_\bullet| \ar[d]               & |P\mathcal{O}_\bullet| \ar[d] \ar[l]_-\simeq \\
             \mathrm{Aut}_{MSpin}(MSpin') \ar[r]       & \mathrm{hAut}_{\omega MSpin}(\omega MSpin') & |Gl_1(MSpin)_\bullet| \ar[l]_-\simeq\ 
}$$
of monoids and groups. Its construction and the verification of commutativity of this diagram occupies essentially the rest of the appendix. The proof of the theorem is immediate from here and given below.\\

To construct the replacement $\omega M$ (denoted $M$ instead of $\omega$ in \cite[Section 3.2]{Sh}) for any $R$-module $M$ proceed as follows: Its $n$th space is the realisation of the $\mathcal I$-space $\omega_nM = \Omega^{\mathcal I}(sh^n M)_\bullet$, which in turn is given by 
$$k \longmapsto \Omega^k M_{k+n}$$
This $\mathcal I$-space retains a $\Sigma_n$-action through the last coordinates ($sh^n M$ is a $\Sigma_n$-spectrum after all) and thus $|\omega_n M|$ is indeed a $\Sigma_n$-space. The $R$-module structure (and thus the suspension maps) arise by realising the map
$$R_k \wedge (\omega_n M)_l \longrightarrow (\omega_{k+n} M)_l$$
given by mapping $r \in R_k, g: S^l \rightarrow M_{l+n}$ to
$$S^l \stackrel g\longrightarrow M_{l+n} \stackrel{r \cdot}\longrightarrow M_{k+l+n} \longrightarrow M_{l+k+n}$$
where the last map is induced by the obvious block permutation. The ring structure on $R$ induces pairings $\omega_k R \boxtimes \omega_l R \rightarrow \omega_{k+l} R$, which in turn give $\omega R$ a ring structure (though even if $R$ was commutative, $\omega R$ is not as the realisation is not symmetrically lax monoidal). Similarly, $\omega M$ becomes a module over $\omega R$; in formulas this action is given via 
$$(\omega_m R)_k \times (\omega_n M)_l \longrightarrow (\omega_{m+n} M)_{k+l}$$
sending
$$f: S^k \longrightarrow R_{k+m}, g:S^l \longrightarrow M_{l+n}$$
to 
$$S^{k+l} \cong S^k \wedge S^l \stackrel{f \wedge g}\longrightarrow R_{k+m} \wedge M_{l+n} \longrightarrow M_{k+m+l+n} \longrightarrow M_{k+l+m+n},$$
the last arrow again being a block permutation.
Restricting this action to $Gl_1(R)_\bullet$ sitting inside $(\omega_0 R)_\bullet$ produces an action of $|Gl_1(R)_\bullet|$ on $\omega M$ by homotopy equivalences. If $M$ is semi-stable (and as orthogonal spectra both $MSpin$ and $KO$ are) then $\omega M$ is readily checked to be an $\Omega$-spectrum (using \cite[Proposition 2.6]{SS2}, this also follows from the evident comparison to Shipley's detection functor $D$ and \cite[Theorem 3.1.5]{Sh}, but $D$ does not have good monoidality properties).

Analogously, suppose an action of an $\mathcal I$-group $G'$ on $M$, that is $\Sigma_n$-equivariant, base point preserving actions $G'_n \times M_n \rightarrow M_n$, such that the diagram
$$\xymatrix@-1pc{G'_n \times R_k \times M_n \ar[r] \ar[d]& R_k \wedge M_n \ar[d] \\
            G'_{k+n} \times M_{k+n} \ar[r]       & M_{k+n}}$$
commutes, where the map $G'_n \rightarrow G'_{k+n}$ is induced by the inclusion into the last $n$ coordinates as the notation suggests. From this we can produce maps $G'_\bullet \boxtimes \omega_k M \rightarrow \omega_k M$ via
$$G'_n \times (\omega_k M)_l \longrightarrow (\omega_k M)_{n+l}$$
sending $g \in G_{n}, f: S^l \rightarrow M_{l+k}$ to
$$S^{n+l} \cong S^n \wedge S^l \stackrel{\id \wedge f}\longrightarrow S^n \wedge M_{l+k} \longrightarrow M_{n+l+k} \stackrel{\cdot \iota(g)}{\longrightarrow} M_{n+l+k}$$
where $\iota\colon G_n \rightarrow G_{n+l+k}$ is induced by the inclusion into the first $n$ coordinates. This yields an associative action of the $\mathcal I$-space-monoid associated to the $\mathcal I$-group $G'$ on $\omega_k M$ and thus homomorphisms $|G'_\bullet| \rightarrow \mathrm{End}(\omega_k M)$, that are readily checked to fit together into a map $|G'_\bullet| \rightarrow \mathrm{End}_{\omega R}(\omega M)$.

Finally, to construct the vertical map in the six-term diagram above we need to recall (and slightly generalise) the construction by Dardalat and Pennig of a morphism of commutative $\mathcal I$-space monoids $DP \colon G_\bullet \rightarrow Gl_1(R)_\bullet$ from an action of an Eckmann-Hilton$-\mathcal I$-group $G_\bullet$ on a commutative symmetric ring spectrum $R$: As already mentioned their construction is given by
\[g \longmapsto (s \mapsto g \cdot u(s))\]
for $u_n: S^n \rightarrow R_n$ the unit of $R$, and indeed turns out to work for an arbitrary $\mathcal I$-group and an action as above with $M = R$ a not necessarily commutative ring spectrum. In this case, however, it yields a map $DP \colon G_\bullet \rightarrow Gl_1(R^{op})_\bullet$ of $\mathcal I$-space monoids (as should be expected since the diagram above expresses left linearity of the $G_\bullet$-action). The verification is a tedious but straight forward check using the centrality of the $\mathbb S$-action on $R$. 

If now given actions of $G_\bullet$ on both $R^{op}$ and $M$, we obtain two natural maps
\[G_\bullet \boxtimes \omega_k M \longrightarrow \omega_k M,\]
one as the composite
\[\xymatrix{G_\bullet \boxtimes \omega_k M \ar[r]^-{DP \boxtimes \mathrm{id}} & Gl_1(R)_\bullet \boxtimes \omega_k M \ar[r] & \omega_k M}\]
and the other straight from the discussion above. 

To compare these, we define an \emph{Eckmann-Hilton action} of an $\mathcal I$-group on an $R$-module $M$ to consist of actions $G_n \times R_n \rightarrow R_n$ and $G_n \times M_n \rightarrow M_n$ that give $G_\bullet$-actions on the $R$-modules $R^{op}$ and $M$, respectively, in the sense defined above and in addition make the diagram
$$\xymatrix@-1pc{G_m \times R_m \times M_n \ar[r] \ar[d] & R_m \wedge M_n \ar[d] \\
            G_{m+n} \times M_{m+n} \ar[r]                      & M_{m+n}}$$
commute. For $G_\bullet$ an Eckmann-Hilton-$\mathcal I$-group, $R = M$ a commutative ring spectrum and the two actions equal the two conditions we just presented splice together precisely to the condition Dardalat and Pennig give in \cite{DP}. Without the assumption that $R$ be commutative it clearly becomes a centrality condition for the action and it is readily checked, that in this case the Dardalat-Pennig construction also gives a map $G_\bullet \rightarrow Gl_1(R)_\bullet$ but we shall make no use of this as our examples are commutative.

For an Eckmann-Hilton action of an $\mathcal I$-group $G_\bullet$ on an $R$-module $M$ it is now easily checked that the two maps $G_\bullet \boxtimes \omega_k M \longrightarrow \omega_k M$ under consideration agree.

We now have all the ingredients for the proof of the comparison theorem: 

\begin{proof}[Proof of the theorem]
Consider again the diagram 
$$\xymatrix@-1pc{ P\mathcal{O}(\ell^2) \ar[d] \ar[r]^-\simeq & |P\mathcal{O}'_\bullet| \ar[d]               & |P\mathcal{O}_\bullet| \ar[d] \ar[l]_-\simeq \\
             \mathrm{Aut}_{MSpin}(MSpin') \ar[r]       & \mathrm{hAut}_{\omega MSpin}(\omega MSpin') & |Gl_1(MSpin)_\bullet|. \ar[l]_-\simeq\ 
}$$
First, the action of $P\mathcal{O}_\bullet$ on $MSpin$ gives the right vertical arrow. Secondly, the $\mathcal I$-group $P\mathcal{O}'_\bullet$ acts on the $MSpin$-module $MSpin'$ giving the vertical map in the middle. The lower horizontal maps come from the general properties of $\omega$ discussed above. Now, $P\mathcal{O}_\bullet$ includes into $P\mathcal{O}'_\bullet$ and thus we obtain an action of $P\mathcal{O}_\bullet$ on $MSpin'$ by restriction. Together with its action on $MSpin$ this gives an Eckmann-Hilton action of $P\mathcal{O}_\bullet$ on the $MSpin$-module $MSpin'$ so that the right hand square is commutative. All asserted coherences are straight forward calculations and we leave them to the reader.

The left hand side is easier: We can form the constant $\mathcal I$-group with value $P\mathcal{O}(\ell^2)$. The group $P\mathcal O(\ell^2)$ includes into its  realisation $$|const_\mathcal I P\mathcal{O}(\ell^2)| = B\mathcal I \times P\mathcal{O}(\ell^2)$$ which in turn includes into that of $P\mathcal{O}'_\bullet$ via a monoid homomorphism. The lower horizontal map again comes from the obvious functoriality of $\omega$ and the diagram clearly commutes. We explained above how the theorem follows from the commutativity of the above diagram.
\end{proof}

Let us remark once more that the detour through the fibrant replacement and the diagram above are necessitated by the framework of May and Sigur\dh{}sson. The construction of the map $P\mathcal O_\bullet \rightarrow Gl_1(MSpin)_\bullet$ and its comparison to the homotopically constructed map $F_{Spin} \rightarrow Gl_1(MSpin)$ made no use of it.

\begin{urem}
Finally, let us explain the error in the work of Antieau, Gepner and Gomez. They show that the space of maps $K(\mathbb Z/2,2) \rightarrow BGl_1(KO)$ has two path-components, but their construction of an example of such a map through geometric means is faulty: In the appendix they describe an evident action of $P\mathcal{O}(\ell^2)$ on each space of the $K$-theory spectrum from \cite{Jo} but these actions do not actually commute with the suspension maps invalidating \cite[Proposition 4.1]{AGG}. Therefore, there seems no clear way to obtain a map $BP\mathcal{O}(\ell^2) \rightarrow BGl_1(KO)$ from their results. 
A modification of the spectrum from \cite{Jo} along the lines of our modification $KO'$ of $KO$ can certainly be used to produce a map as desired. However, it will not be immediate that such a map admits an $E_\infty$ refinement, something we obtain from our use of $\mathcal I$-space-monoids only.
\end{urem}

\end{document}